\pgfplotsset{my style/.append style={axis x line= center, axis y line=
center, axis equal }}
\definecolor{darkred}{rgb}{0.8, 0.0, 0.0}
\newcommand{\dis}{\mathrm{dis}}
\newcommand{\diam}{\mathrm{diam}}
\newcommand{\dgh}{d_{\mathrm{GH}}}
\newcommand{\di}{d_{\mathrm{I}}}
\newcommand{\db}{d_{\mathcal{B}}}
\newcommand{\cost}{\mathrm{cost}}
\newcommand{\codom}{\mathrm{codom}}
\newcommand{\dom}{\mathrm{dom}}
\newcommand{\low}{\mathrm{low}}
\newcommand{\N}{\mathbb{N}}
\newcommand{\eps}{\epsilon}
\newcommand{\V}{\mathcal{V}}
\newcommand{\K}{\mathrm{K}}
\newcommand{\M}{\mathcal{M}}
\newcommand{\F}{\mathbb{F}}
\newcommand{\C}{\mathfrak{C}}
\newcommand{\D}{\mathcal{D}}
\newcommand{\G}{\mathcal{G}}
\newcommand{\Z}{\mathcal{Z}}
\newcommand{\B}{\mathcal{B}}
\newcommand{\Vb}{\mathbb{V}}
\newcommand{\Wb}{\mathbb{W}}
\newcommand{\Lm}{\mathrm{L}}
\newcommand{\Sc}{\mathcal{S}}
\newcommand{\I}{\mathbb{I}}
\newcommand{\dgm}{\mathrm{dgm}}
\newcommand{\kvr}{\K^{\mathrm{VR}}}
\newcommand{\lc}{\{ \! \{ }
\newcommand{\rc}{\} \! \}}
\newcommand{\vr}{\mathrm{VR}}
\newcommand{\mor}{\mathrm{Mor}}
\newcommand{\Cc}{\mathcal{C}}
\newcommand{\pc}{\mathcal{P}}
\newcommand{\Span}{\mathrm{span}}
\newcommand{\T}{\mathbf{T}}
\newtheorem{theorem}{Theorem}[section]
\newtheorem{lemma}[theorem]{Lemma}
\newtheorem{proposition}[theorem]{Proposition}
\newtheorem{claim}[theorem]{Claim}
\newtheorem{corollary}[theorem]{Corollary}
\newtheorem{definition}[theorem]{Definition}
\newtheorem{fact}[theorem]{Fact}
\newtheoremstyle{ex}%
{}{}%
{\slshape}{}%
{\bfseries}{}{0.5em}{}%
\theoremstyle{ex}
\newtheorem{example}[theorem]{Example}
\begin{document}
\title{A Primer on Persistent Homology of Finite Metric Spaces}

\author[1]{Facundo M\'emoli}
\author[2]{Kritika Singhal}

	\affil[1]{Department of Mathematics and Department of Computer Science and Engineering, 
		The Ohio State University.\thanks{\texttt{memoli@math.osu.edu}}}

\affil[2]{Department of Mathematics, 
		The Ohio State University.\thanks{\texttt{singhal.53@osu.edu}}}

\maketitle

\section{Introduction}
TDA (topological data analysis) is a relatively new area of research related to importing classical ideas from topology into the realm of data analysis. Under the umbrella term TDA, there falls, in particular, the notion of \emph{persistent homology} PH, which can be described in a nutshell, as the study of scale dependent homological invariants of datasets.

The so called ``persistent homology pipeline" is depicted in Figure \ref{fig:pipe}: datasets are modeled as finite metric spaces. A given finite metric space induces a \emph{filtered simplicial complex} (via the Vietoris-Rips construction), which in turn, via the homology functor induces a \emph{persistence vector space}. Finally, these persistence vector spaces are decomposed into certain building blocks which give rise to a \emph{persistence diagram}. The figure  suggests that if two different datasets  (modeled as finite metric spaces) $(X,d_X)$ and $(Y,d_Y)$ are given, the dissimilarity between them controls how dissimilar their persistence diagrams will be. In other words, the assignment of persistence diagram to a dataset is continuous (actually Lipschitz) in a suitable sense.

In these notes, we provide a terse self contained description of the main ideas behind the construction of persistent homology as an invariant feature of datasets, and also discuss details about its stability to perturbations. These notes also include a brief discussion about applications to biological data and an overview of software packages that implement the PH pipeline.

Other useful resources for a more in depth study of the different ideas contained in these notes are \cite{comp-top,carlsson-topo,ghrist}.

\begin{figure}
\centering
\scalebox{1}{\includegraphics{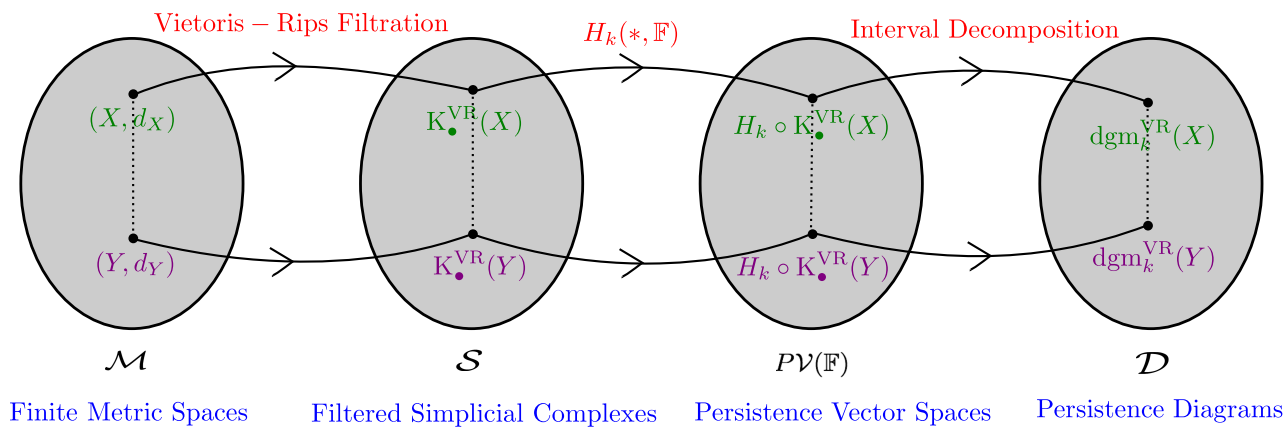}}
\caption{The persistent homology pipeline} \label{fig:pipe}
\end{figure}

\paragraph{Organization.} 
In Section \ref{sec:clustering} we provide a mathematical formulation of clustering (in both its flat and hierarchical forms) of finite metric spaces as a precursor for the notion of persistent homology.

In Section \ref{sec:simplicial} we cover the basics of simplicial homology -- a necessary ingredient for later discussing the theoretical elements pertaining to persistent homology.

In Section \ref{sec:persistence} we describe the persistent homology pipeline in detail, and in particular we review the construction of Vietoris-Rips persistence barcodes. In Section \ref{subsec:interpretation} we provide an analysis of the Vietoris-Rips barcodes corresponding to zero-dimensional persistent homology.

In Section \ref{sec:stability} we review the main theoretical elements underpinning the stability of Vietoris-Rips persistent homology of finite metric spaces.

In Section \ref{sec:applications} we overview a number of applications of persistent homology to biological data and beyond.

Finally, Section \ref{sec:software} provides a list of software packages implement different parts of the persistent homology pipeline.

\paragraph{Acknowledgements.}
These notes are meant to supplement the lectures given by the first author during the TGDA@OSU TRIPODS Summer School held at MBI during May 2018. Videos of the lectures are available at \cite{mbi}. We acknowledge NSF support  through project CCF \#1740761.

\tableofcontents

\section{Clustering} \label{sec:clustering}

One of the methods for extracting information from a data set is clustering the data set according to some rule. In this paper, datasets are represented as finite metric spaces. A finite metric space is a pair $(X,d_X)$, where $X$ is a finite set and $d_X:X \times X \rightarrow \mathbb{R}_+$ is a distance function. We denote by $\mathcal{M}$ the collection of all finite metric spaces. 

We start by providing a definition of a clustering method with some examples. For any $n \in \N$, we denote the set $\{1,2, \ldots,n \}$ by $[1:n]$. Given $(X,d_X) \in \mathcal{M}$, we denote by $P(X)$, the collection of all  partitions of $X$. Precisely, every $P \in P(X)$ is a family of sets $P = \{B_1, B_2, \ldots, B_k \} $, $k \leq |X|$, such that $B_i \subseteq X$ for all $i \in [1:k]$, for all $i,j \in [1:k]$ with $i \neq j$, $B_i \cap B_j = \emptyset$ and $\cup_{i=1}^k B_i = X$. We refer to each $B_i$, $i \in [1:k]$ as a \emph{block} of $P$. We denote by $\mathcal{P}$, the collection of all pairs $(X,P_X)$, where $X \in \mathcal{M}$ and $P_X \in P(X)$. Formally,
$$\mathcal{P} := \{ (X,P_X)~|~X \in \mathcal{M}, P_X \in P(X) \}.$$ 

\begin{definition}[Clustering Method]
A clustering method $\C$ is a map $\C: \mathcal{M} \rightarrow \mathcal{P}$ such that for every $(X,d_X) \in \mathcal{M}$, $\C((X,d_X)) = (X,P_X)$, where $P_X \in P(X)$.
\end{definition}

\begin{example}
An example of a clustering method is the discrete clustering that partitions every metric space into singletons. Precisely, we have $\C_{\mathrm{disc}} : \M \rightarrow \mathcal{P}$ with $\C_{\mathrm{disc}}((X,d_X)) = (X,S_X)$, where $S_X \in P(X)$ is the partition of $X$ into singletons. 
\end{example}

\begin{example}
Another example of a clustering method is the full clustering that partitions every metric space into a single block. Precisely, we have $\C_{\mathrm{full}}:\M \rightarrow \mathcal{P}$ with $\C_{\mathrm{full}}((X,d_X)) = (X,\{X \})$.
\end{example}

There are various other examples of clustering methods such as partitioning into clusters whose diameter is bounded above by a constant, or partitioning into clusters whose diameter is bounded below by a constant, and so on \cite{jardinebook}. Since we are working with finite metric spaces, the metric structure is the only information we have for determining a partition. Thus, it seems natural that for $(X,d_X), (Y,d_Y) \in \M$ and a structure preserving map $f:X \rightarrow Y$, a partition of $Y$ induced by a clustering method $\C$ can be determined, at least partially, using the map $f$ and a partition of $X$ induced by the same clustering method $\C$. Precisely, we want a clustering method $\C$ to be a \emph{functor}, see \cite{CM13}.

In order to view a clustering method $\C$ as a functor, we need to view $\M$ and $\mathcal{P}$ as categories. We refer the readers to \cite{jacobson2012basic,spivak2014category} for an account on category theory. We define the categorical structure on $\M$ and $\mathcal{P}$ as follows: 

\begin{definition}[Category of Finite Metric Spaces]
Let $\mathcal{M}$, by abuse of notation, denote the category of finite metric spaces. The objects of $\mathcal{M}$ are finite metric spaces $(X,d_X)$, and the morphisms are defined as follows: for $(X,d_X),(Y,d_Y) \in \mathcal{M}$, we say that a set map $\phi:X \rightarrow Y$ belongs to $\mathrm{Mor}_{\mathcal{M}}((X,d_X),(Y,d_Y))$ if for all $x,x' \in X $,
$$d_X(x,x') \geq d_Y((\phi(x), \phi(x')).$$
In other words, $\phi$ is $1$-Lipschitz. 
\end{definition}

We observe that for all $(X,d_X), (Y,d_Y) \in \M$, the set $\mathrm{Mor}_\M((X,d_X),(Y,d_Y)) \neq \emptyset$, since the map $\phi : X \rightarrow Y$ that sends every point in $X$ to a single point in $Y$ belongs to $\mathrm{Mor}_\M((X,d_X),(Y,d_Y))$. We now define the category of partitions of finite sets.

\begin{definition}[Category of Partitions of Finite Sets ]
Let $\mathcal{P}$, by abuse of notation, denote the category of partitions of finite sets. The objects of $\mathcal{P}$ are $(X, P_X)$, where $X$ is a finite set and $P_X \in P(X)$. Here, recall that $P(X)$ is the family of all partitions of $X$.

Given $P_Y = \{B_1, \ldots, B_k \} \in P(Y)$, and a set map $\phi:X \rightarrow Y$, the pullback of $P_Y$ along $\phi$ is defined as $\phi^\ast P_Y = \{\phi^{-1}(B_i)~|~i \in [1:k] \}$. Clearly, $\phi^\ast P_Y \in P(X)$. The morphisms in $\mathcal{P}$ are then defined as follows: for $(X,P_X),(Y,P_Y) \in \mathcal{P}$, we say that a set map $\phi:X \rightarrow Y$ belongs to $ \mathrm{Mor}_{\mathcal{P}}((X,P_X),(Y,P_Y))$ if $P_X$ is finer than $\phi^\ast P_Y$. This means that for every set $A \in P_X $, there exists a set $B \in \phi^\ast P_Y$ such that $A \subseteq B$.
\end{definition}

We observe that for all $(X,P_X), (Y,P_Y) \in \mathcal{P}$, the set $\mathrm{Mor}_\mathcal{P}((X,P_X),(Y,P_Y)) \neq \emptyset$, since the map $\phi:X \rightarrow Y$ that sends every point of $X$ to a single point of $Y$ satisfies $\phi^\ast P_Y = \{X\} $. Thus, any $P_X \in P(X)$ is finer than $\phi^\ast P_Y$, and we obtain $\phi \in \mathrm{Mor}_\mathcal{P}((X,P_X),(Y,P_Y))$.

We now define a clustering method $\C : \M \rightarrow \mathcal{P}$ to be a functor. This means that for all $(X,d_X), (Y,d_Y) \in \M$ and $\phi \in \mathrm{Mor}_\M((X,d_X),(Y,d_Y))$, 
$$\C((X,d_X)) \in \mathcal{P}~\mbox{and}~\C(\phi) \in \mathrm{Mor}_{\mathcal{P}}(\C((X,d_X)), \C((Y,d_Y))).$$ 
Furthermore, $\C$ satisfies $\C(\mathrm{id}_{(X,d_X)}) = \mathrm{id}_{\C((X,d_X))}$ and for all $(Z,d_Z) \in \M$ with $\psi \in \mathrm{Mor}_\M((Y,d_Y),(Z,d_Z)) $,
$$ \C(\psi \circ \phi) = \C(\psi) \circ \C(\phi).$$

We recall the clustering method $\C_{\mathrm{disc}}$ and show that $\C_{\mathrm{disc}}$ is a functor. For all $(X,d_X) \in \M$, $\C_{\mathrm{disc}}((X,d_X)) = (X,S_X)$, where $S_X \in P(X)$ is the partition into singletons. For any $(Y,d_Y) \in \M$, let $\phi:X \rightarrow Y$ be a set map such that $\phi \in \mathrm{Mor}_\M((X,d_X), (Y,d_Y))$. Then, clearly, $\phi \in \mathrm{Mor}_{\mathcal{P}}((X,S_X),(Y,S_Y))$ since $S_X$ is the partition of $X$ into singletons, and thus is finer than any other partition of $X$, in particular, $S_X$ is finer than $\phi^\ast S_Y$. It is trivial to check that $\C_{\mathrm{disc}}$ satisfies other properties of being a functor.

Similarly, it can be checked that the clustering method $\C_{\mathrm{full}}$ is a functor. We now provide another example of a clustering method that is also a functor, and is defined for every real number $\delta \geq 0$. It is called the Vietoris-Rips clustering functor.

\begin{example}[Vietoris-Rips clustering functor] \label{ex:VR}
The Vietoris-Rips clustering functor at a fixed scale parameter $\delta > 0$, is denoted by $\C^{\vr_\delta} $, and is defined as follows: given $(X,d_X) \in \mathcal{M}$ and $\delta > 0$, define $P_X(\delta) \in P(X)$ as $P_X(\delta) = X \slash \sim_\delta$, where $x \sim_\delta x'$ if and only if there exists a sequence $x_0, x_1, \ldots, x_n$ in $X$ with $x_0 = x $ and $x_n = x'$, such that for all $i \in [1:n]$, $d_X(x_{i-1}, x_i) \leq \delta$. Then, $\C^{\vr_\delta}((X,d_X)) :=  (X,P_X(\delta))$. The clustering $P_X(\delta) $ is referred to as the single linkage clustering of $X$ at scale $\delta $. 

Consider a metric space $(X,d_X)$ where $X = \{a,b \}$ and $d_X(a,b) = r > 0$. Then, for all $\delta < r$, $\C^{\vr_\delta}((X,d_X)) = \{ \{ a\}, \{ b\} \}$, and for $\delta \geq r$, $\C^{\vr_\delta}((X,d_X)) = \{a,b \}$.
\end{example}

The functoriality of $\C^{\vr_\delta}$ can be seen as follows: given $(X,d_X),(Y,d_Y) \in \M$, let $\phi \in \mor_\M((X,d_X),(Y,d_Y))$. Then, by definition, for all $x,x' \in X$, $d_X(x,x') \geq d_Y(\phi(x), \phi(x'))$. Now, let $\delta > 0$ be fixed. If $x,x' \in X$ are such that $x \sim_\delta x'$, then there exists a sequence $x = x_0, x_1, \ldots, x_n = x'$ in $X$ such that for all $0 \leq i \leq n-1$, $d_X(x_i, x_{i+1}) \leq \delta$. By definition of $\phi$, this implies that for all $0 \leq i \leq n-1$, $d_Y(\phi(x_i), \phi(x_{i+1})) \leq \delta$, and therefore $\phi(x) \sim_\delta \phi(x')$. Thus, we obtain that $P_X(\delta)$ is finer than $\phi^\ast P_Y(\delta)$. We can similarly check that $\C^{\vr_\delta} $ satisfies other properties of being a functor. 

Given $\delta \geq 0$, let $\Delta_2(\delta)$ denote the metric space consisting of $2$ points at distance $\delta$. The next theorem states the  uniqueness of the Vietoris-Rips clustering functor with respect to a particular property.

\begin{theorem}[{\cite[Theorem 6.4]{CM13}}] 
Let $\C : \M \rightarrow \mathcal{P}$ be a clustering functor for which there exists $\delta_\C > 0$ with the property that:
\begin{enumerate}
\item $\C(\Delta_2(\delta))$ is in two pieces for all $\delta \in [0, \delta_\C)$, and
\item $\C(\Delta_2(\delta))$ is in one piece for all $\delta \geq \delta_\C$.
\end{enumerate}
Then, $\C$ is the Vietoris-Rips clustering functor with parameter $\delta_\C$.
\end{theorem}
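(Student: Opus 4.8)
The plan is to prove that for \emph{every} $(X,d_X)\in\M$ the partition $\C((X,d_X))$ coincides, block for block, with the single linkage partition $P_X(\delta_\C)$; that is, to show that for $x,x'\in X$ the points $x$ and $x'$ lie in a common block of $\C((X,d_X))$ \emph{if and only if} $x\sim_{\delta_\C}x'$. Once this equality of partitions is established we get $\C((X,d_X))=\C^{\vr_{\delta_\C}}((X,d_X))$ on objects, and the agreement on morphisms is then automatic, since a clustering functor acts as the identity on underlying set maps. I would prove the two implications separately, in each case transporting the prescribed behavior of $\C$ on the two point spaces $\Delta_2(\delta)$ through functoriality.

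For the implication $x\sim_{\delta_\C}x'\Rightarrow$ ``same block of $\C((X,d_X))$'', fix a chain $x=x_0,x_1,\dots,x_n=x'$ with $d_X(x_{i-1},x_i)\le\delta_\C$ for all $i$. For each $i$ the set map $\phi_i\colon\Delta_2(\delta_\C)\to X$ sending the two points of $\Delta_2(\delta_\C)$ to $x_{i-1}$ and $x_i$ is distance non-increasing (its source has those points at distance exactly $\delta_\C$), hence $\phi_i\in\mor_\M(\Delta_2(\delta_\C),(X,d_X))$. By hypothesis~(2) applied at $\delta=\delta_\C$, the partition $\C(\Delta_2(\delta_\C))$ has a single block; since $\C(\phi_i)\in\mor_\mathcal{P}(\C(\Delta_2(\delta_\C)),\C((X,d_X)))$, that single block must be contained in $\phi_i^{-1}(B)$ for some block $B$ of $\C((X,d_X))$, forcing $x_{i-1},x_i\in B$. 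Transitivity of ``lies in the same block'' then propagates along the chain, putting $x$ and $x'$ in one block.

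For the converse, ``same block $\Rightarrow x\sim_{\delta_\C}x'$'', I would argue by contraposition. Suppose $x\not\sim_{\delta_\C}x'$, and let $C$ be the single linkage cluster (block of $P_X(\delta_\C)$) containing $x$, so $x'\notin C$ and both $C$ and $X\setminus C$ are nonempty. The key observation is that any $z\in C$ and any $w\in X\setminus C$ satisfy $d_X(z,w)>\delta_\C$, for otherwise the one step chain $z,w$ would place $w$ in $C$. Hence, choosing any $\delta^\ast\in(0,\delta_\C)$, the surjection $\psi\colon X\to\Delta_2(\delta^\ast)$ that sends $C$ to one point $p$ and $X\setminus C$ to the other point $q$ is distance non-increasing, so $\psi\in\mor_\M((X,d_X),\Delta_2(\delta^\ast))$. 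By hypothesis~(1) applied at $\delta^\ast<\delta_\C$, the partition $\C(\Delta_2(\delta^\ast))$ is $\{\{p\},\{q\}\}$; functoriality then forces $\C((X,d_X))$ to be finer than $\psi^\ast\{\{p\},\{q\}\}=\{C,X\setminus C\}$, so the block of $\C((X,d_X))$ containing $x$ is contained in $C$ and therefore cannot contain $x'$.

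The only place the hypothesis is used in an essential, non-obvious way — and hence the step I expect to require the most care — is the scale bookkeeping in this converse direction: although single linkage at scale $\delta_\C$ already merges points at distance exactly $\delta_\C$, distinct single linkage clusters are in fact separated by a distance \emph{strictly} greater than $\delta_\C$, which is what leaves room to pick a parameter $\delta^\ast$ strictly below $\delta_\C$ at which $\C$ still keeps $\Delta_2(\delta^\ast)$ split; dually, the forward direction uses precisely the boundary value $\delta=\delta_\C$, at which $\C$ has already merged $\Delta_2(\delta_\C)$. The remaining functor axioms ($\C(\mathrm{id})=\mathrm{id}$, compatibility with composition) play no role beyond guaranteeing that the two one-step reductions above are legitimate.
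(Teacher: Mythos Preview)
The paper does not actually prove this theorem: it is stated with a citation to \cite[Theorem~6.4]{CM13} and no proof is given in the present notes. So there is no ``paper's own proof'' to compare against here.

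That said, your argument is correct and is essentially the standard proof of this result. Both directions are handled cleanly: the forward implication transports the single-block partition of $\Delta_2(\delta_\C)$ into $X$ via $1$-Lipschitz maps along a $\delta_\C$-chain, and the converse collapses $X$ onto $\Delta_2(\delta^\ast)$ for $\delta^\ast<\delta_\C$ using the fact that distinct $\sim_{\delta_\C}$-classes are separated by distance strictly exceeding $\delta_\C$. Your scale bookkeeping is right on both ends, and your remark that agreement on morphisms is automatic (since clustering functors act as the identity on underlying set maps) is the correct way to finish.
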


As we discussed, we have that $\C_{\mathrm{disc}}, \C_{\mathrm{full}}$ and the Vietoris-Rips clustering functor are   examples of functorial clustering methods. It is worth pointing out that the well known  average linkage and  complete linkage clustering methods fail to be functorial, see \cite{CM10}.

We observe that the Vietoris-Rips clustering functor $\C^{\vr_\delta} $ varies with $\delta$. Thus, a natural question one may ask is how the clustering at scale $\delta$ is related to the clustering at scale $\delta' \neq \delta$. This leads to the concept of hierarchical clustering.

\subsection{Hierarchical Clustering} \label{subsec:hierarchical}

We start by looking at an example. Consider a metric space $(Z,d_Z)$ where $Z = \{a,b,c \}$ and $d_Z(a,b) = 0.4, d_Z(b,c) = 0.6, d_Z(a,c) = 0.7$. Then, we have that for all $0 \leq \delta < 0.4$, $\C^{\vr_\delta}((Z,d_Z)) = \{ \{a\}, \{b \}, \{c \} \}$, for $0.4 \leq \delta < 0.6$, $\C^{\vr_\delta}((Z,d_Z)) = \{ \{a,b \}, \{c \} \}$ and for $\delta \geq 0.6 $, $\C^{\vr_\delta}((Z,d_Z)) = \{ \{a,b,c \} \}$. We observe that for $\delta = 0$, the clusters are singletons, and for $\delta$ large enough, all points fall into one cluster. In addition, for $\delta' > \delta$, the clusters at $\delta'$ are obtained by merging clusters at $\delta$. Such a clustering can be pictorially represented using a dendrogram.  

\begin{definition}[Dendrogram]
Let $X$ be a finite set. A dendrogram over $X$ is a function $\theta_X:[0, \infty) \rightarrow P(X)$, such that the following hold:
\begin{enumerate}
\item For all $s \leq t$, $\theta_X(s)$ is finer than $\theta_X(t)$.
\item $\theta_X(0)$ is the partition into singletons.
\item There exists $t_f \in (0, \infty)$ such that $\theta_X(t_f) = \{X \} $.
\item For all $t > 0$, there exists $\eps > 0$ such that $\theta_X(t + \eps) = \theta_X(t)$.
\end{enumerate}
The parameter $t$ is referred to as the scale of partition.
\end{definition}

A dendrogram depicting the Vietoris-Rips clustering (called the \emph{single linkage dendrogram}) of the $3$-point metric space $(Z,d_Z)$ described above is as follows: 
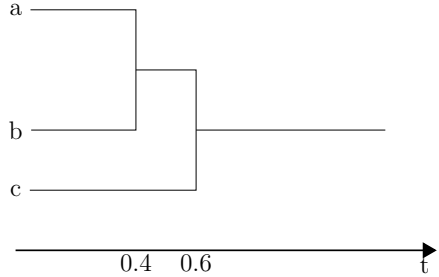
\begin{figure}[H]
\centering
\scalebox{0.8}{
\begin{tikzpicture}[sloped]
\node (a) at (-6,4) {a};
\node (b) at (-6,2) {b};
\node (c) at (-6,1) {c};
\node (ab) at (-4,3) {};
\node (all) at (-3,2) {};
\node (end) at (0,2) {};
\node (label) at (-4,-0.2) {0.4};
\node (lab) at (-3,-0.2) {0.6};
\node (labe) at (0.8,-0.25 ) {t};

\draw  (a) -| (ab.center);
\draw  (b) -| (ab.center);
\draw  (c) -| (all.center);
\draw  (ab.center) -| (all.center);
\draw  (all.center) -| (end);
\draw[->,-triangle 60,thick] (-6,0) --  (1,0);
\end{tikzpicture}}
\caption{Single linkage dendrogram of $(Z,d_Z)$.} \label{fig:sl}
\end{figure}
Here, we have that $\theta_Z(0.3) = \{\{a\}, \{b\}, \{c\} \} $, $\theta_Z(0.5) = \{\{a,b\}, \{c\} \} $ and $\theta_Z(1) = \{ \{a,b,c \} \} $. Precisely, we have that for every $t \geq 0$, the $\theta_Z(t) = \C^{\vr_t}((Z,d_Z))$.

Let $G(X)$ denote the collection of all dendrograms over a finite set $X$ and let $\mathcal{G} = \{(X,\theta_X)~|~|X| < \infty,~ \theta_X \in G(X) \} $. Then, $\mathcal{G}$ can be viewed as a category. The objects of $\G$ are as specified in the definition, and for all $(X,\theta_X), (Y, \theta_Y) \in \G$, a set map $\phi:X \rightarrow Y $ belongs to $\mathrm{Mor}_\G((X,\theta_X),(Y,\theta_Y)) $ if for all $t \geq 0$, $\theta_X(t) $ is finer than $\phi^\ast \theta_Y(t)$. We again have that for all $(X,\theta_X), (Y, \theta_Y) \in \G$, $\mathrm{Mor}_\G((X,\theta_X),(Y,\theta_Y)) \neq \emptyset$, since the map $\phi:X \rightarrow Y$ that takes all points of $X$ to a single point of $Y$ belongs to $\mathrm{Mor}_\G((X,\theta_X),(Y,\theta_Y))$. We are now ready to define hierarchical clustering formally.

\begin{definition}[Hierarchical Clustering]
A hierarchical clustering method is any functor $\mathcal{H} : \mathcal{M} \rightarrow \mathcal{G} $, i.e.~for any $(X,d_X) \in \M$, $\mathcal{H}((X,d_X)) = (X, \theta_X)$, where $\theta_X \in G(X)$.
\end{definition}

The Vietoris-Rips clustering functor, as described in Example \ref{ex:VR}, is a an example of a hierarchical clustering, since for any $X \in \M$, the function $P_X:[0, \infty) \rightarrow P(X)$, as defined in Example \ref{ex:VR}, is a dendrogram over $X$. The Vietoris-Rips clustering functor applied on $(X,d_X)$ induces a metric on $X$ in the following manner: given $x,x' \in X$, we can find the smallest $t > 0$, such that $x$ and $x'$ belong to same block of the partition $\C^{\vr_t}((X,d_X))$. This provides us with a measure of dissimilarity between points of $X$. This dissimilarity induces a metric on $X$, referred to as an ultra-metric. 

\begin{definition}[Ultra-metric]
Given a set $X$, a function $u : X \times X \rightarrow \mathbb{R}_{\geq 0} $ is called an ultra-metric if the following hold: 
\begin{enumerate}
\item For all $x,x' \in X$, $u(x,x') = u(x',x) \geq 0$ and $u(x,x') = 0$ if and only if $x=x'$. 
\item For all $x,x',x'' \in X$, $u(x,x'') \leq \max \{u(x,x'), u(x',x'') \} $.
The second condition is referred to as the strong triangle inequality.
\end{enumerate}
\end{definition}

We now define the ultra-metric induced by the Vietoris-Rips clustering functor.

\begin{definition}[Ultra-metric induced by $\C^\vr$] \label{def:ultrametric}
Let $(X,d_X) \in \M$. For every $x,x' \in X$, let $S_{x,x'}$ denote the collection of all sequences $x = x_0, x_1, \ldots, x_n = x'$in $X$ satisfying $x_i \neq x_j$ for all $i,j \in [1:n]$ with $i \neq j$. Then, the ultra-metric induced by $\C^\vr$, denoted by $u_X$, is defined as 
$$ u_X(x,x') := \min_{x=x_0, x_1, \ldots, x_n=x' \in S_{x,x'}} \max_{i \in [1:n]} d_X(x_i, x_{i-1}). $$
\end{definition}
It is straightforward to check that $u_X$ satisfies the properties of symmetry, positivity and strong triangle inequality. We observe that for any $x,x' \in X$, $u_X(x,x')$ is the smallest $t > 0$ at which the block containing $x$ merges with the block containing $x'$ in the single linkage dendrogram of $X$. Thus, we obtain that the Vietoris-Rips clustering functor applied to $(X,d_X)$ induces an ultra-metric on $X$. It has been shown in \cite[Theorem 18]{CM10} that the Vietoris-Rips clustering functor is the unique hierarchical clustering method with this property.

We remark that the ultrametric $u_X$ defined above is the \emph{maximal subdominant ultrametric on X}, which means that for every ultrametric $\hat{u}$ on $X$ satisfying $\hat{u} \leq d_X$, we have $\hat{u} \leq u_X$.

In subsequent sections, we describe the machinery of persistent homology --- a generalization of hierarchical clustering --- which can be used to obtain information about a metric space. The rest of the paper is focused on developing the theory of persistent homology. 

\section{Simplicial Homology} \label{sec:simplicial}

In this section, we define the pre-requisites needed to develop the theory of persistent homology. We will be defining and working only with abstract simplicial complexes in this paper. For the rest of the paper, any simplicial complex is an abstract simplicial complex. We refer the reader to \cite{munkresbook} for the definitions in this section.

\begin{definition}[Simplicial Complex]
A simplicial complex is a collection $\K$ of finite non-empty sets such that if $A$ is an element of $\K$, then so is every non-empty subset of $A$.
\end{definition}

For example, the collection $$\Lm = \{ \{3,5,7 \}, \{3,5 \}, \{3,7\}, \{5,7\}, \{2\}, \{3\}, \{5\}, \{7\} \}$$ forms a simplicial complex, but the collection $\{\{2,3\}, \{1\}, \{2\} \}$ does not.

\begin{definition}[Subcomplex]
Given a simplicial complex $\K$, a subcollection $\mathrm{J}$ of $\K$ is a subcomplex of $\K$ if $\mathrm{J}$ is a simplicial complex in itself.
\end{definition}

The collection $\{ \{3,5 \}, \{3 \}, \{5 \} \} $ is a subcomplex of the simplicial complex $\Lm$ defined above.

\begin{definition}[Simplex of a complex]
Every element $A$ of a simplicial complex $\K$ is a simplex of $\K$.
\end{definition}

Some of the simplices of $\Lm$ are $\{3,5,7\}$, $\{2\} $, and $\{5\}$. For every simplicial complex $\K$, if $\sigma = \{ x_0, x_1, \ldots, x_k \} $, $k \in \mathbb{N}$, is a simplex of $\K$, we assume that $\sigma$ is \emph{oriented} by the ordering $x_0 < x_1 < \ldots < x_k$. We write $[x_0, x_1, \ldots, x_k] $ to denote the equivalence class of the even permutations of this ordering, and $-[x_0,x_1,\ldots,x_k ] $ to denote the equivalence class of the odd permutations of this ordering. 

\begin{definition}[Face of a simplex]
The faces of a simplex $A$ of a simplicial complex $\K$ are the non-empty subsets of $A$.
\end{definition}

The faces of the simplex $[3,5,7 ]$ of the simplicial complex $\Lm$ defined above are 
$$\{3,5,7 \}, \{3,5 \}, \{3,7 \}, \{5,7 \}, \{3 \}, \{5 \}, \{ 7\}. $$

\begin{definition}[Dimension of a complex]
The dimension of a simplicial complex $\K$ is the largest dimension of a simplex of $\K$, where the dimension of a simplex $X$ of $\K$ is $|X| - 1$. If there is no such largest dimension, then dimension of $\K$ is infinite.
\end{definition}

The dimension of the simplicial complex $\Lm$ is $2$.

\begin{definition}[Vertices of a complex]
The vertex set of a simplicial complex $\K$, denoted by $V(\K)$, is the union of the one-point elements of $\K$.
\end{definition} 

The vertex set of the simplicial complex $\Lm$ is $\{2,3,5,7 \} $.

\begin{definition}[$n$-skeleton of a complex]
Given $n \in \mathbb{Z}_+$, an $n$-skeleton of a simplicial complex $\K$, denoted by $\K^S_n$, is the collection of all simplices of $\K$ of dimension at most $n$. 
\end{definition}

We observe that the $0$-skeleton of a simplicial complex $\K$ consists of all singletons of $\K$. For the simplicial complex $\Lm$, we have
$$ \Lm^S_0 = \{ [2], [3],[5], [7] \},~\Lm^S_1 = \{ [3,5], [3,7], [5,7] \} \cup \Lm^S_0~\mbox{and}~\Lm_2 = \{[3,5,7]\} \cup \Lm^S_1.  $$

\begin{definition}[Connected component]
Two simplices $S$ and $T$ of a simplicial complex $\K$ belong to the same connected component of $\K$ if there exists a non-empty sequence of simplices of $\K$, $S = S_0, S_1, \ldots, S_n = T$ such that for all $0 \leq i \leq n-1$, $S_i \cap S_{i+1} \neq \emptyset$.
\end{definition}

The connected components of the simplicial complex $\Lm$ are 

$$\mbox{$\{ [2] \}$ and $\{ [3,5,7], [3,5], [3,7], [5,7], [3], [5], [7] \} $.}$$

\begin{definition}[Simplicial map]
Given simplicial complexes $\K$ and $\K'$, a map $\phi: V(\K) \rightarrow V(\K')$ is called a simplicial map, if for every simplex $R$ of $\K$, $\phi(R)$ is a simplex of $\K'$. 
\end{definition}

The collection of all simplicial complexes along with simplicial maps between them forms a category. For any pair of simplicial complexes $\K$ and $\Lm$, a map that sends every vertex of $\K$ to the same vertex of $\Lm$ is a simplicial map. Thus, the set of simplicial maps between $\K$ and $\Lm$ is non-empty. We denote the category of simplicial complexes by $\Sc$. 

We need the following two definitions in order to define the homology groups.

\begin{definition}[Quotient vector space]
Let $\Vb$ and $\mathbb{U}$ be vector spaces over a field $\F$ such that $\mathbb{U} \subseteq \Vb$. We define an equivalence relation $\sim$ on $\Vb$ as follows: for $v,v' \in \Vb$, $v \sim v'$ if $v -v' \in \mathbb{U}$. For every $v \in \Vb$, the equivalence class of $v$ is denoted by $v + \mathbb{U}$, and is defined as $v + \mathbb{U} = \{v + u~|~u \in \mathbb{U}\} $. The quotient vector space $\frac{\Vb}{\mathbb{U}}$ is then defined as
$$ \frac{\Vb}{\mathbb{U}} := \Vb \slash \sim~:= \{v + \mathbb{U}~|~v \in \Vb\}.$$ 
\end{definition}

\begin{definition}[Isomorphic vector spaces]
Two vector spaces $\Vb$ and $\Wb$ over a field $\F$ are called isomorphic if there exists a bijective linear transformation $\phi : \Vb \rightarrow \Wb$.
\end{definition}

\begin{definition}[Chain Complex]
Let $\K$ be a simplicial complex and $\F$ be a field. Let $\K_n$ denote the collection of all simplices of $\K$ of dimension $n$. For every $n \in \mathbb{Z}_+$, we define 
$$C_n := \left\lbrace \sum_{i} c_i \sigma_i~|~c_i \in \mathbb{F}, \sigma_i \in \K_n \right\rbrace.$$
Precisely, $C_n$ is the free vector space over $\mathbb{F}$ with basis  $\K_n$. The boundary map $\partial_n:C_n \rightarrow C_{n-1}$ is defined as follows: for $\sigma = [x_0, x_1, \ldots, x_n] \in \K_n$, and $0 \leq i \leq n $, we denote the element $[x_0, x_1, \ldots, x_n ] \setminus x_i \in \K_{n-1} $ by $[x_0, x_1, \ldots, x_{i-1}, \widehat{x_i}, x_{i+1}, \ldots, x_n]$. Then, we set 
$$\partial_n(\sigma) := \sum_{i=0}^n [x_0, x_1, \ldots, x_{i-1}, \widehat{x_i}, x_{i+1}, \ldots, x_n] (-1)^i.$$ 
Since $C_n$ is a free vector space over $\K_n$, it suffices to define the boundary maps on elements of $\K_n$. The chain complex associated to $\K$, denoted by $\mathcal{C}_\ast(\K, \mathbb{F})$, is defined to be the sequence of vector spaces $\{C_n\}_{n \in \mathbb{Z}_+ }$, along with the boundary maps $\partial_n:C_n \rightarrow C_{n-1}$. Precisely, we have
$$ \Cc_\ast(\K,\F) := \ldots \xrightarrow{\partial_{n+1}} C_n \xrightarrow{\partial_n} C_{n-1} \xrightarrow{\partial_{n-1}} \ldots \xrightarrow{\partial_3} C_2 \xrightarrow{\partial_2} C_1 \xrightarrow{\partial_1} C_0 \xrightarrow{\partial_0} 0. $$
\end{definition}

\begin{lemma} \label{lem:chain}
Let $\K$ be a simplicial complex, and $\Cc_\ast(\K,\F)$ be the chain complex as defined above. Then, for all $n \in \mathbb{Z}_+$, we have $\partial_{n} \circ \partial_{n+1}  = 0$.
\end{lemma}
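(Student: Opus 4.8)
The plan is to verify $\partial_n \circ \partial_{n+1} = 0$ by a direct computation on basis elements, since both $C_{n+1}$ and $C_n$ are free vector spaces on the simplices of the relevant dimension, and the boundary maps were defined on basis elements and extended linearly. So it suffices to fix an oriented $(n+1)$-simplex $\sigma = [x_0, x_1, \ldots, x_{n+1}]$ and show $\partial_n(\partial_{n+1}(\sigma)) = 0$.

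First I would expand $\partial_{n+1}(\sigma) = \sum_{i=0}^{n+1} (-1)^i [x_0, \ldots, \widehat{x_i}, \ldots, x_{n+1}]$, and then apply $\partial_n$ term by term. Applying $\partial_n$ to the $i$-th face removes a second vertex $x_j$ with $j \neq i$. The key bookkeeping point is that the sign picked up when deleting $x_j$ from the face $[x_0, \ldots, \widehat{x_i}, \ldots, x_{n+1}]$ depends on the position of $x_j$ within that face, which is its original index shifted down by one if $j > i$ and unchanged if $j < i$. This yields a double sum of the $(n-1)$-simplices $[x_0, \ldots, \widehat{x_i}, \ldots, \widehat{x_j}, \ldots, x_{n+1}]$, each appearing exactly twice: once from deleting $x_i$ first and then $x_j$, and once from deleting $x_j$ first and then $x_i$. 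The central claim is that these two occurrences carry opposite signs, so they cancel in pairs.

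The main obstacle — really the only nontrivial part — is getting the sign accounting exactly right. I would split the double sum into the terms with $j < i$ and the terms with $j > i$, write the sign of the first type as $(-1)^i (-1)^j$ and of the second type as $(-1)^i (-1)^{j-1}$, then relabel the indices in one of the two sums (swapping the roles of $i$ and $j$) so that both sums range over the same set of simplices $[x_0, \ldots, \widehat{x_p}, \ldots, \widehat{x_q}, \ldots, x_{n+1}]$ with $p < q$. After relabeling, the two contributions to each such simplex are $(-1)^{p+q}$ and $(-1)^{p+q-1}$, which sum to zero. Concluding that $\partial_n \circ \partial_{n+1}$ annihilates every basis element, linearity gives $\partial_n \circ \partial_{n+1} = 0$ on all of $C_{n+1}$. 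I would also note the boundary cases: for small $n$ the identity holds trivially because some of the chain groups or maps in question involve the zero space, so the statement is vacuous or immediate there.
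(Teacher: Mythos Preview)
Your proposal is correct and is exactly the standard sign-cancellation argument that underlies the claim. The paper's own proof is a one-liner (``the proof follows from the definition of boundary map'') with no details, so your write-up simply supplies the computation the paper omits; there is no difference in approach, only in level of detail.
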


\begin{proof}
The lemma holds for any field $\F$, and the proof follows from the definition of boundary map. 
\end{proof}

Since $\partial_n \circ \partial_{n+1} = 0$ for all $n \in \mathbb{Z}_+$, we obtain that for every $n \in \mathbb{Z}_+$, the image of $\partial_{n+1} $ is contained in the kernel of $\partial_n $. 

\begin{definition}[$n$-Cycle and $n$-Boundary]
Given a simplicial complex $\K$ and its associated chain complex $\mathcal{C}_*(\K, \mathbb{F}) = (C_n, \partial_n)_{n \in \mathbb{Z}_+}$, the kernel of the map $\partial_n$ is the set of $n$-cycles and is denoted by $\mathcal{Z}_n(\K,\F)$. The image of the map $\partial_{n+1} $ is the set of $n$-boundaries, and is denoted by $\mathcal{B}_n(\K,\F)$. \end{definition}

By Lemma \ref{lem:chain}, we have that for all $n \in \mathbb{Z}_+$, $\mathcal{B}_n(\K,\F) \subseteq \mathcal{Z}_n(\K,\F)$.

\begin{definition}[Simplicial Homology]
Given $n \in \mathbb{Z}_+ $, the $n$-th homology group of a simplicial complex $\K$, is denoted by $H_n(\K, \mathbb{F})$, and is defined as 
$$H_n(K, \mathbb{F}) := \frac{\mathcal{Z}_n(\K,\F)}{\mathcal{B}_n(\K,\F)}. $$
That is, $H_n(\K, \mathbb{F})$ is a quotient vector space and the elements of $H_n(\K, \mathbb{F})$ are equivalence classes of $n$-cycles of $\Cc_\ast(\K,\F)$.
\end{definition}

\begin{definition}[Betti numbers]
Given $n \in \mathbb{Z}_+$, the $n$-th Betti number of a simplicial complex $\K$ is denoted by $\beta_n(\K)$, and is defined as $\beta_n(\K) := \dim(H_n(\K,\F))$.
\end{definition}

\begin{lemma} \label{lem:betti0}
For every simplicial complex $\K$, $\beta_0(\K)$ is equal to the number of connected components of $\K$.
\end{lemma}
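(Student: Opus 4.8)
The plan is to show that $H_0(\K,\F)$ has dimension equal to the number of connected components by exhibiting an explicit basis. First I would recall that $C_0$ is the free vector space on the vertices $V(\K) = \{v_1, \ldots, v_m\}$, that $\Z_0(\K,\F) = \ker \partial_0 = C_0$ in its entirety (since $\partial_0$ is the zero map), and that $\B_0(\K,\F) = \im\,\partial_1$ is spanned by the elements $[y] - [x]$ over all $1$-simplices $\{x,y\}$ of $\K$. Hence $H_0(\K,\F) = C_0 / \B_0(\K,\F)$, and the task reduces to computing the dimension of this quotient.

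Next I would fix an enumeration $\mathcal{K}^{(1)}, \ldots, \mathcal{K}^{(c)}$ of the connected components of $\K$ and pick one vertex $w_j \in V(\mathcal{K}^{(j)})$ in each. The key claim is that the images $\overline{[w_1]}, \ldots, \overline{[w_c]}$ in $H_0(\K,\F)$ form a basis. For the spanning part, I would argue that for any vertex $x$ lying in component $\mathcal{K}^{(j)}$, there is a path of edges (more precisely, a sequence of simplices with consecutive nonempty intersections, which can be refined to an edge path within the $1$-skeleton of that component) connecting $x$ to $w_j$; summing the relations $[x_{i}] - [x_{i-1}] \in \B_0$ along this path telescopes to give $[x] - [w_j] \in \B_0$, so $\overline{[x]} = \overline{[w_j]}$. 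Since the $[x]$ span $C_0$, the $\overline{[w_j]}$ span the quotient. One subtlety to handle carefully: the definition of connected component in the excerpt is phrased via chains of simplices meeting pairwise, not via edge paths between vertices, so I would include a short observation that these two notions coincide on vertices — if $S_i \cap S_{i+1} \neq \emptyset$ one can pass through a common vertex, and any simplex is connected through its faces down to its vertices.

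For linear independence, I would use the connected-components decomposition of the chain complex. Each edge $\{x,y\}$ of $\K$ has both endpoints in the same component, so $\partial_1$ respects the direct sum decomposition $C_0 = \bigoplus_{j=1}^c C_0^{(j)}$, where $C_0^{(j)}$ is spanned by the vertices of $\mathcal{K}^{(j)}$, and likewise $\B_0 = \bigoplus_j \B_0^{(j)}$. Therefore $H_0(\K,\F) \cong \bigoplus_{j=1}^c C_0^{(j)}/\B_0^{(j)}$, and it suffices to show each summand is one-dimensional, i.e. that $H_0$ of a connected complex is $\F$. For a connected complex the spanning argument above already shows $C_0^{(j)}/\B_0^{(j)}$ is spanned by the single class $\overline{[w_j]}$, so its dimension is at most one; to see it is exactly one (not zero), I would exhibit a nonzero linear functional on $C_0^{(j)}$ vanishing on $\B_0^{(j)}$, namely the augmentation map $\varepsilon\big(\sum c_i [x_i]\big) = \sum c_i$, which kills every generator $[y]-[x]$ of $\B_0^{(j)}$ but sends $[w_j]$ to $1$; hence $\overline{[w_j]} \neq 0$.

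The main obstacle I anticipate is not any single deep step but rather making the bookkeeping between the excerpt's simplex-chain definition of connectivity and the homological edge-path argument fully rigorous; once that translation is in place, the telescoping-sum spanning argument and the augmentation-map independence argument are both routine. A secondary point requiring a line of care is the degenerate case of an isolated vertex (a component with no edges), where $\B_0^{(j)} = 0$ and $C_0^{(j)} \cong \F$ directly — this is consistent with the general argument but worth noting so the reader sees the formula $\beta_0(\K) = c$ holds without exception.
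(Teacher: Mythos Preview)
Your argument is correct. The paper takes a slightly different, more compressed route: rather than exhibiting an explicit basis of $H_0$, it does a pure dimension count. It observes that $\dim\Z_0=|\K_0|$, asserts that within a connected component on $n$ vertices the span of the elements $[x_j]-[x_i]$ coming from edges has dimension $n-1$, and hence $\dim\B_0=\sum_i(|S_i|-1)=|\K_0|-l$, giving $\dim H_0=l$. Your approach is more explicit --- you actually name a basis $\{\overline{[w_1]},\ldots,\overline{[w_c]}\}$ and verify spanning via telescoping along edge paths and independence via the augmentation map. In effect, your spanning and independence arguments are precisely what is needed to justify the paper's unproven assertion that the image has dimension $n-1$ on each component (the augmentation shows $\dim\B_0^{(j)}\leq n_j-1$, the telescoping shows $\dim\B_0^{(j)}\geq n_j-1$), so your proof can be read as a more careful version of the same idea rather than a genuinely different strategy. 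Your attention to reconciling the paper's simplex-chain definition of connectivity with edge-path connectivity, and to the isolated-vertex case, are points the paper's proof passes over in silence.
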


\begin{proof}
For any simplicial complex $\K$, we have $H_0(\K,\F) = \frac{\mathcal{Z}_0(\K,\F)}{\mathcal{B}_0(\K,\F)}$, where $\Z_0(\K,\F) = \mathrm{ker}(\partial_0)$ and $\B_0(\K,\F) = \mathrm{im}(\partial_1)$. The $1$-simplex $\K_1$ consists of all elements of $\K$ of cardinality $2$, while the $0$-simplex $\K_0$ consists of singletons of $\K$. We use the symbol $\cong$ to denote an isomorphism of the concerned spaces, and $\langle S \rangle_F$ to denote the free vector space over $\F$ with basis elements of $S$. We have $\langle \K_0 \rangle_\F \cong \F^{|\K_0|} $, and $\langle \K_1 \rangle_\F \cong \F^{|\K_1|} $. The map $\partial_0 : \langle \K_0 \rangle_\F \rightarrow 0$ satisfies $\mathrm{ker}(\partial_0) = \langle \K_0 \rangle_\F \cong \F^{|\K_0|} $. The image under $\partial_1$ of an element $[x_i, x_j ] \in \K_1$ is $[x_j] - [x_i] \in \langle \K_0 \rangle_\F$. The elements $[x_i,x_j], [x_j,x_k] \in \K_1 $ belong to the same connected component, and span a subspace of dimension $2$ in $\langle \K_0 \rangle_\F$ with basis $\{ [x_j]-[x_i], [x_k]-[x_j] \} $. In general, if a connected component $S$ in $\K$ contains $n$ vertices, then the image under $\partial_1$ of the elements of $\K_1$ belonging to $S$ is a vector space of dimension $n-1$. Thus, if $\K$ has $l$ connected components $S_1, S_2, \ldots, S_l$, then we have that $\mathrm{im}(\partial_1) \cong \F^{\sum_i (|S_i|-1)}$. Thus, we obtain $H_0(\K,\F) \cong \frac{\F^{|\K_0|}}{\F^{\sum_i (|S_i|-1)}} $. Now, we know that $\K_0$ consists of all singletons and therefore $|\K_0|$ is equal to the number of vertices of $\K$. Every vertex of $\K$ belongs to a unique connected component, therefore we have that $\sum_i |S_i| = |\K_0|$. This implies that $H_0(\K,\F) \cong \F^l $, and we obtain that $\beta_0(\K)$ is equal to the number of connected components of $\K$. 
\end{proof}

Given simplicial complexes $\K$ and $\Lm$, and a simplicial map $\phi:\K \rightarrow \Lm$, a natural question to ask is whether $\phi$ induces a map between chain complexes $\Cc_\ast(\K,\F)$ and $\Cc_\ast(\Lm, \F)$, as well as between homology vector spaces $H_n(\K,\F)$ and $H_n(\Lm,\F)$, for $n \in \mathbb{Z}_+$. The following proposition answers this question.

\begin{proposition} \label{prop:functor}
Given simplicial complexes $\K$ and $\Lm$, a simplicial map $\phi:\K \rightarrow \Lm$ induces a map $\overline{\phi}: \Cc_\ast(\K,\F) \rightarrow \Cc_\ast(\Lm,\F) $, as well as maps $H_n(\phi,\F):H_n(\K,\F) \rightarrow H_n(\Lm,\F)$, for every $n \in \mathbb{Z}_+$. 
\end{proposition}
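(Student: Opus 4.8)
The plan is the classical one: I will first build a chain map $\overline{\phi}$ degree by degree from the combinatorial data of $\phi$, then check that it commutes with the boundary operators, and finally invoke the general fact that a chain map carries cycles to cycles and boundaries to boundaries, hence descends to a linear map on each homology vector space.

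\textbf{Building $\overline{\phi}$.} For every $n \in \mathbb{Z}_+$ I would specify $\overline{\phi}_n$ on the basis $\K_n$ of the degree-$n$ part of $\Cc_\ast(\K,\F)$ and extend $\F$-linearly. Given an oriented simplex $\sigma = [x_0, x_1, \ldots, x_n] \in \K_n$, the set $\{\phi(x_0), \ldots, \phi(x_n)\}$ is a simplex of $\Lm$ because $\phi$ is simplicial. If $\phi(x_0), \ldots, \phi(x_n)$ are pairwise distinct, this set is an $n$-simplex of $\Lm$ and I set $\overline{\phi}_n(\sigma) := [\phi(x_0), \ldots, \phi(x_n)]$, reading the right-hand side through the orientation convention of Section~\ref{sec:simplicial} (sort the vertices into increasing order, multiplying by the sign of the sorting permutation). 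If two or more of the $\phi(x_i)$ coincide, then $\{\phi(x_0),\ldots,\phi(x_n)\}$ has cardinality at most $n$, it is not an $n$-simplex, and I set $\overline{\phi}_n(\sigma) := 0$ in the degree-$n$ part of $\Cc_\ast(\Lm,\F)$. Assembling the $\overline{\phi}_n$ gives $\overline{\phi} : \Cc_\ast(\K,\F) \to \Cc_\ast(\Lm,\F)$.

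\textbf{The chain-map identity.} The substantive step is to verify $\partial_n \circ \overline{\phi}_n = \overline{\phi}_{n-1} \circ \partial_n$ for all $n \geq 1$ (for $n = 0$ there is nothing to check, since $\partial_0$ lands in $0$). It is enough to test this on a basis simplex $\sigma = [x_0, \ldots, x_n]$, and I would organize the computation by how many coincidences $\phi$ creates among $x_0, \ldots, x_n$. When $\phi$ is injective on $\{x_0, \ldots, x_n\}$, both sides are, up to the orientation conventions, the alternating sum of the codimension-one faces of the image simplex and match term by term. When $\phi$ identifies three or more of the vertices, every face of $\sigma$ still contains a repeated image, so $\overline{\phi}_{n-1}(\partial_n \sigma) = 0$ and also $\partial_n \overline{\phi}_n(\sigma) = \partial_n 0 = 0$. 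The remaining case --- $\phi(x_i) = \phi(x_j)$ for exactly one pair $i < j$, all other images distinct --- is where the argument has content: here $\overline{\phi}_n(\sigma) = 0$, so the left-hand side vanishes, while on the right-hand side the only faces of $\sigma$ whose image is again an $(n{-}1)$-simplex are $[x_0,\ldots,\widehat{x_i},\ldots,x_n]$ and $[x_0,\ldots,\widehat{x_j},\ldots,x_n]$, both of which map to the same oriented simplex of $\Lm$ but with opposite signs, so their contributions $(-1)^i(\cdots)$ and $(-1)^j(\cdots)$ cancel.

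\textbf{Descent to homology and functoriality.} Once $\partial \overline{\phi} = \overline{\phi} \partial$ is established, $\overline{\phi}_n$ maps $\Z_n(\K,\F) = \ker \partial_n$ into $\Z_n(\Lm,\F)$ and $\B_n(\K,\F) = \im \partial_{n+1}$ into $\B_n(\Lm,\F)$; therefore $z + \B_n(\K,\F) \mapsto \overline{\phi}_n(z) + \B_n(\Lm,\F)$ is a well-defined linear map $H_n(\phi,\F) : H_n(\K,\F) \to H_n(\Lm,\F)$. The relations $\overline{\mathrm{id}_\K} = \mathrm{id}$ and $\overline{\psi \circ \phi} = \overline{\psi} \circ \overline{\phi}$ follow at once from the defining formula (when no vertices are collapsed both sides agree, and when a vertex is collapsed both sides are $0$), and these identities pass to the quotients, giving functoriality of $H_n(-,\F)$.

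\textbf{Main obstacle.} I expect the only delicate point to be the sign cancellation in the one-coincidence case: one has to track the permutation sign introduced when the two equal images $\phi(x_i) = \phi(x_j)$ are moved into sorted position within each of the two surviving boundary faces, and confirm that this sign, together with the $(-1)^i$ versus $(-1)^j$ from the boundary formula, forces exact cancellation. Everything else is routine bookkeeping with the orientation conventions.
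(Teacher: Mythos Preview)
Your proposal is correct and follows the same strategy as the paper: extend $\phi$ linearly to chains, verify $\partial\,\overline{\phi} = \overline{\phi}\,\partial$ on basis simplices, and pass to the quotient. The paper's argument is a one-line computation that does not separately treat the case where $\phi$ identifies two vertices of a simplex; your explicit case analysis (injective on the vertex set, exactly one pair collapsed, more than one pair collapsed) and the attendant sign cancellation actually make the argument more complete than the paper's own version.
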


\begin{proof}
Let $\K$ and $\Lm$ be simplicial complexes and $\phi:\K \rightarrow \Lm$ be a simplicial map. Let $\Cc_\ast(\K,\F) = \{C_n(\K) \xrightarrow{\partial_n^\K} C_{n-1}(\K) \}_{n \in \mathbb{Z}_+} $ and $\Cc_\ast(\Lm,\F) = \{C_n(\Lm) \xrightarrow{\partial_n^\Lm} C_{n-1}(\Lm) \}_{n \in \mathbb{Z}_+} $. For every $n \in \mathbb{Z}_+$, $C_n(\K)$ and $C_n(\Lm)$ are free vector spaces over the collection of $n$-simplices of $\K$ and $\Lm$ respectively. Therefore, the map $\overline{\phi}:C_n(\K) \rightarrow C_n(\Lm) $ defined by linearly extending $\phi$ is a well-defined map. Precisely, we have that for $n \in \mathbb{Z}_+$, $i \in I$, $I$ an indexing set, $c_i \in \F$, $\sigma_i \in \K_n$ and $\sum_i c_i \sigma_i \in C_n(\K) $, $\overline{\phi}\left(\sum_i c_i \sigma_i \right) = \sum_i c_i \phi(\sigma_i)$. Since $\phi$ is a simplicial map, we obtain $\sum_i c_i \phi(\sigma_i) \in C_n(\Lm) $. Thus, we obtain the following diagram:
\[ 
\begin{tikzcd}
\ldots \arrow[r] & C_n(\K) \arrow[r,"\partial_n^\K "] \arrow[d,"\overline{\phi} "] & C_{n-1}(\K) \arrow[r,"\partial_{n-1}^\K "] \arrow[d,"\overline{\phi}"] & \ldots \arrow[r,"\partial_2^\K"] & C_1(\K) \arrow[r,"\partial_1^\K"] \arrow[d,"\overline{\phi}"] & C_0(\K) \arrow[r,"\partial_0^\K"] \arrow[d,"\overline{\phi}"] & 0 \\
\ldots \arrow[r] & C_n(\Lm) \arrow[r,"\partial_n^\Lm "] & C_{n-1}(\Lm) \arrow[r,"\partial_{n-1}^\Lm "] & \ldots \arrow[r,"\partial_2^\Lm"] & C_1(\Lm) \arrow[r,"\partial_1^\Lm"] & C_0(\Lm) \arrow[r,"\partial_0^\Lm"] & 0
\end{tikzcd}
\]

We now show that the squares in the above diagram commute. Let $\sigma = [x_0,x_1,\ldots,x_n] \in C_n(\K) $. Then $\partial_n^\K(\sigma) = \sum_{i=0}^n [x_0, x_1, \ldots, \widehat{x_i},\ldots,x_n](-1)^i $. We have 
$$\overline{\phi}(\partial_n^\K(\sigma)) = \sum_{i=0}^n [\phi(x_0),\phi(x_1), \ldots, \widehat{\phi(x_i)} \ldots, \phi(x_n)](-1)^i = \partial_n^\Lm(\phi(\sigma)) = \partial_n^\Lm(\overline{\phi}(\sigma)). $$
Thus, we have shown that $\overline{\phi} \circ \partial_n^\K = \partial_n^\Lm \circ \overline{\phi}$ for every $n \in \mathbb{Z}_+$. This implies that for every $n \in \mathbb{Z}_+$, the map $\overline{\phi}$ sends the kernel of $\partial_n^\K$ to the kernel of $\partial_n^\Lm$, and the image of $\partial_n^\K$ to the image of $\partial_n^\Lm$. Thus, for every $n \in \mathbb{Z}_+$, $\overline{\phi}$ sends $\Z_n(\K,\F)$ to $\Z_n(\Lm,\F)$ and $\B_n(\K,\F)$ to $\B_n(\Lm,\F)$. This provides us with the map $H_n(\phi,\F): H_n(\K,\F) \rightarrow H_n(\Lm,\F) $ defined as $H_n(\phi,\F)\left( \frac{\Z_n(\K,\F)}{\B_n(\K,\F)}  \right) = \frac{\overline{\phi}(\Z_n(\K,\F))}{\overline{\phi}(\B_n(\K,\F))} \subseteq \frac{\Z_n(\Lm,\F)}{\B_n(\Lm,\F)} = H_n(\Lm,\F) $. It is straightforward to check that for any simplicial complex $\K$, $H_n(\mathrm{id}_\K,\F) = \mathrm{id}_{H_n(\K,\F)} $, and for simplicial maps $\phi:\K \rightarrow \Lm $, $\psi:\Lm \rightarrow \mathcal{N}$, $H_n(\psi \circ \phi,\F) = H_n(\psi,\F) \circ H_n(\phi,\F)$.
\end{proof}

A direct corollary of the above theorem is the following.

\begin{corollary}
Let $\V_\F$ denote the category of finite dimensional vector spaces over the field $\F$ with linear transformations. Then, for every $n \in \mathbb{Z}_+$, $H_n(\ast,\F):\Sc \rightarrow \V_\F$ is a functor.
\end{corollary}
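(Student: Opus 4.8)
The statement to prove is that $H_n(\ast,\F):\Sc \rightarrow \V_\F$ is a functor for every $n \in \mathbb{Z}_+$, where $\Sc$ is the category of simplicial complexes with simplicial maps and $\V_\F$ is the category of finite-dimensional $\F$-vector spaces with linear transformations.

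The plan is to verify the three defining conditions of a functor: (1) it sends objects to objects, (2) it sends morphisms to morphisms, and (3) it respects identities and composition.

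\textbf{Plan.} The proof will essentially be an assembly of facts already established. First I would check the object map: given a simplicial complex $\K$, we must verify $H_n(\K,\F) \in \V_\F$, i.e.\ that it is a finite-dimensional vector space over $\F$. By definition $H_n(\K,\F) = \Z_n(\K,\F)/\B_n(\K,\F)$ is a quotient of a subspace of the free vector space $C_n$ on the $n$-simplices of $\K$; if we restrict attention to finite simplicial complexes (or finite-dimensional $C_n$), then $C_n$ is finite dimensional, hence so is its subspace $\Z_n(\K,\F)$ and the quotient $H_n(\K,\F)$. This places $H_n(\K,\F)$ in $\V_\F$.

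\textbf{Morphism map and functoriality.} Next, for a simplicial map $\phi:\K \to \Lm$, Proposition \ref{prop:functor} already produces a linear map $H_n(\phi,\F):H_n(\K,\F)\to H_n(\Lm,\F)$, which is a morphism in $\V_\F$. Finally, the identity and composition axioms, $H_n(\mathrm{id}_\K,\F) = \mathrm{id}_{H_n(\K,\F)}$ and $H_n(\psi\circ\phi,\F) = H_n(\psi,\F)\circ H_n(\phi,\F)$, are exactly the two equalities asserted (``straightforward to check'') at the end of the proof of Proposition \ref{prop:functor}; I would simply invoke them, or spell them out by noting that on chains $\overline{\mathrm{id}_\K}$ is the identity and $\overline{\psi\circ\phi} = \overline{\psi}\circ\overline{\phi}$ by the linear-extension construction, and that passing to the quotient is functorial. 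Combining these three verifications gives the claim, and since the argument holds for each $n \in \mathbb{Z}_+$, the corollary follows.

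\textbf{Main obstacle.} There is no serious obstacle here; the only point requiring a little care is ensuring the codomain is genuinely $\V_\F$ (finite-dimensional vector spaces) rather than all vector spaces — this needs the implicit finiteness hypothesis on the complexes (consistent with the paper's standing focus on finite metric spaces and hence finite complexes), so I would state that restriction explicitly. Everything else is a direct consequence of Proposition \ref{prop:functor} and its proof.
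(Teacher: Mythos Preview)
Your proposal is correct and matches the paper's approach: the paper gives no separate proof at all, simply stating the corollary as a direct consequence of Proposition~\ref{prop:functor}. You are in fact more careful than the paper, since you explicitly address why $H_n(\K,\F)$ lands in $\V_\F$ (finite-dimensional vector spaces) by invoking the finiteness of the complexes, a point the paper leaves implicit.
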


We now introduce the concept of contiguous simplicial maps which will be used crucially later.

\begin{definition}[Contiguous Simplicial Maps]
Given simplicial complexes $\K$ and $\Lm$, simplicial maps $f,g:\K \rightarrow \Lm $ are said to be contiguous if for every simplex $\sigma \in \K$, $f(\sigma) \cup g(\sigma)$ is a simplex in $\Lm$.
\end{definition}

The following lemma states that contiguous maps agree at the level of homology groups.

\begin{lemma}[\cite{munkresbook}] \label{lem:contig}
For all $k \in \mathbb{N}$, and simplicial complexes $\K, \Lm$, if maps $f,g:\K \rightarrow \Lm$ are contiguous, then $H_k(f) = H_k(g): H_k(\K,\F) \rightarrow H_k(\Lm,\F) $.
\end{lemma}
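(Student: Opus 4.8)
The plan is to produce a \emph{chain homotopy} between the chain maps $\overline f,\overline g : \Cc_\ast(\K,\F)\to\Cc_\ast(\Lm,\F)$ induced by $f$ and $g$ via Proposition \ref{prop:functor}, and then to deduce the equality on homology from a standard lemma. First I would record the elementary fact that if there exist linear maps $D_n:C_n(\K)\to C_{n+1}(\Lm)$ for $n\in\mathbb{Z}_+$ (with the convention $D_{-1}:=0$) satisfying
$$\partial_{n+1}^\Lm\circ D_n + D_{n-1}\circ\partial_n^\K = \overline g-\overline f\qquad\text{for all }n,$$
then for every $n$-cycle $z\in\Z_n(\K,\F)$ one has $\overline g(z)-\overline f(z)=\partial_{n+1}^\Lm(D_n z)\in\B_n(\Lm,\F)$, so $\overline f(z)$ and $\overline g(z)$ represent the same class; hence $H_n(f,\F)=H_n(g,\F)$ for all $n$. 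This reduces the lemma to constructing such a $D$.

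For the construction I would use the classical \emph{prism operator}. Using the convention from Section \ref{sec:simplicial} that simplices are oriented by an ordering of their vertices (and that a vertex list with a repetition denotes the zero chain), for an $n$-simplex $\sigma=[x_0,x_1,\dots,x_n]$ of $\K$ I would set
$$D_n(\sigma):=\sum_{i=0}^n(-1)^i\,[f(x_0),\dots,f(x_i),g(x_i),\dots,g(x_n)],$$
extended linearly to $C_n(\K)$. The single place where contiguity is used is in checking that $D_n$ takes values in $C_{n+1}(\Lm)$: since $f$ and $g$ are contiguous, $f(\sigma)\cup g(\sigma)$ is a simplex of $\Lm$, so each vertex list appearing above is (after deleting repetitions) a face of $f(\sigma)\cup g(\sigma)$, hence genuinely a simplex of $\Lm$. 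In degree $0$ this already yields $D_0([x])=[f(x),g(x)]$ and $\partial_1^\Lm D_0([x])=[g(x)]-[f(x)]=\overline g([x])-\overline f([x])$, which is the required identity there.

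The remaining task is to verify the chain homotopy identity for general $n$, by expanding $\partial_{n+1}^\Lm D_n(\sigma)$ and $D_{n-1}\partial_n^\K(\sigma)$ and matching terms. Writing $v_i:=[f(x_0),\dots,f(x_i),g(x_i),\dots,g(x_n)]$, the terms of $\partial_{n+1}^\Lm v_i$ obtained by deleting some $f(x_j)$ with $j<i$, or some $g(x_j)$ with $j>i$, assemble (with the opposite sign) into $D_{n-1}\partial_n^\K(\sigma)$ and therefore cancel; the terms obtained by deleting $f(x_i)$ or $g(x_i)$ telescope across consecutive values of $i$, leaving only $[g(x_0),\dots,g(x_n)]=\overline g(\sigma)$ and $-[f(x_0),\dots,f(x_n)]=-\overline f(\sigma)$. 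This bookkeeping --- keeping the signs straight under the oriented-simplex convention, and confirming that the cancellations persist in the degenerate cases $f(x_i)=g(x_i)$ where some brackets vanish --- is the main technical obstacle; it is precisely the computation done in \cite{munkresbook}, so I would either reproduce it or cite it. A slicker alternative that bypasses the signs entirely is the method of acyclic carriers: $\sigma\mapsto$ (the full simplex on $f(\sigma)\cup g(\sigma)$) is an acyclic carrier, since a simplex is contractible, that carries both $\overline f$ and $\overline g$, so the acyclic carrier theorem supplies the chain homotopy for free --- but that route needs machinery not set up in these notes.
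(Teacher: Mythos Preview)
Your proposal is correct and follows the standard chain-homotopy (prism operator) argument; the paper itself does not supply a proof of this lemma but simply states it with a citation to \cite{munkresbook}, so there is no ``paper's own proof'' to compare against. The approach you sketch --- defining $D_n$ explicitly and verifying $\partial D + D\partial = \overline g - \overline f$ --- is exactly one of the two routes Munkres takes (the other being the acyclic-carrier argument you also mention), and either is fully adequate here.
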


In the next section, we describe how to use the machinery developed in this section for studying data sets.

\section{Persistent Homology} \label{sec:persistence}

Persistent homology is a tool that is widely used for studying data sets. The persistent homology pipeline consists of four steps which are outlined below. We remark that some of the terminologies used below have not been defined yet. We will define these later in the section. The pipeline is introduced before so as to provide motivation for this section.

\begin{enumerate}
\item We start with a finite metric space $(X,d_X)$. We recall that every finite dataset can be viewed as a metric space by defining a measure of dissimilarity between its data points.
\item We assign a filtered simplicial complex to the metric space $(X,d_X)$. There are many methods for constructing filtered simplicial complexes from finite metric spaces. We will describe some of these methods in this section.
\item For every $n \in \mathbb{Z}_+$, we apply the homology functor $H_n(\ast,\F)$ to the filtered simplicial complex obtained in the last step. This produces persistence vector spaces.
\item For every persistence vector space obtained in the last step, we determine the persistence diagram associated with it.
\end{enumerate}

We will see that the persistence diagrams obtained in the end encode features of the input data set. We now provide missing details from the above pipeline.

\begin{definition}[Filtered simplicial complex]
A filtered simplicial complex is a sequence of simplicial complexes $\{\K_\delta \}_{\delta \in \mathbb{R}_+} $ such that for all $\delta \leq \delta' $, $\K_\delta \subseteq \K_{\delta'} $.
\end{definition}

We now see some examples of filtered simplicial complexes that can be constructed from a finite metric space $(X,d_X)$.
\begin{definition}[Vietoris-Rips Complex]
Given $(X,d_X) \in \mathcal{M}$ and $\delta \geq 0$, define
$$ \K_{\delta}^{\vr}(X) := \{ \sigma \subseteq X~|~\diam(\sigma) \leq \delta \} .$$
\end{definition}

It is straightforward to see that a Vietoris-Rips complex is a legitimate simplicial complex. In addition, we have that for $\delta \leq \delta'$, $\K_\delta^{\vr}(X) \subseteq \K_{\delta'}^{\vr}(X) $. This is because, for every $\sigma \in \K_\delta^{\vr}(X)$, $\diam(\sigma) \leq \delta \leq \delta'$, and therefore, $\sigma \in \K_{\delta'}^{\vr}(X)$. Thus, for every finite metric space $(X,d_X)$, $\K_\bullet^\vr(X) = \{\K_\delta^{\vr}(X) \xrightarrow{i^X_{\delta,\delta'}} \K_{\delta'}^{\vr}(X)\}_{\delta \leq \delta'} $ is a filtered simplicial complex. Here, $i^X_{\delta,\delta'} $ is the inclusion map. The next proposition follows from the definitions.

\begin{proposition}
Let $\delta \geq 0 $ be fixed. Then, $\K_{\delta}^{\vr} : \mathcal{M} \rightarrow \mathcal{S}$ is a functor.
\end{proposition}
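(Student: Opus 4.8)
The plan is to verify the two functor axioms directly from the definitions, using the observation that the Vietoris--Rips construction is built from the diameter function, which is monotone under $1$-Lipschitz maps. First I would recall what needs to be shown: for a fixed $\delta \geq 0$, the assignment $\K_\delta^\vr$ sends an object $(X,d_X) \in \M$ to the simplicial complex $\K_\delta^\vr(X) = \{\sigma \subseteq X \mid \diam(\sigma) \leq \delta\}$, which we have already noted is a genuine simplicial complex; and it sends a morphism $\phi \in \mor_\M((X,d_X),(Y,d_Y))$ to a simplicial map $\K_\delta^\vr(\phi): \K_\delta^\vr(X) \to \K_\delta^\vr(Y)$. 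The natural candidate for $\K_\delta^\vr(\phi)$ is $\phi$ itself, viewed as a vertex map $V(\K_\delta^\vr(X)) = X \to Y = V(\K_\delta^\vr(Y))$ (noting every singleton has diameter $0 \leq \delta$, so the vertex sets really are all of $X$ and $Y$).

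The key step is to check that $\phi$ is indeed a simplicial map between these complexes. Let $\sigma \in \K_\delta^\vr(X)$, so $\diam(\sigma) \leq \delta$, i.e.\ $d_X(x,x') \leq \delta$ for all $x,x' \in \sigma$. Since $\phi$ is $1$-Lipschitz, $d_Y(\phi(x),\phi(x')) \leq d_X(x,x') \leq \delta$ for all $x,x' \in \sigma$, hence $\diam(\phi(\sigma)) \leq \delta$, so $\phi(\sigma) \in \K_\delta^\vr(Y)$. This shows $\phi$ carries simplices to simplices, so it is a simplicial map; thus $\K_\delta^\vr(\phi) := \phi$ is well-defined as a morphism in $\Sc$.

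It remains to verify the two functoriality identities. For identity morphisms, $\K_\delta^\vr(\mathrm{id}_{(X,d_X)}) = \mathrm{id}_X$, which is exactly the identity simplicial map $\mathrm{id}_{\K_\delta^\vr(X)}$. For composition, given $\phi \in \mor_\M((X,d_X),(Y,d_Y))$ and $\psi \in \mor_\M((Y,d_Y),(Z,d_Z))$, we know $\psi \circ \phi$ is again $1$-Lipschitz (composition of $1$-Lipschitz maps), hence a morphism in $\M$, and as a vertex map $\K_\delta^\vr(\psi \circ \phi) = \psi \circ \phi = \K_\delta^\vr(\psi) \circ \K_\delta^\vr(\phi)$. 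This is immediate since all three maps are the underlying set maps. So both axioms hold and $\K_\delta^\vr$ is a functor.

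There is no real obstacle here: the proposition is essentially a bookkeeping exercise, and the only substantive point is the monotonicity of diameter under $1$-Lipschitz maps, which is a one-line computation. The mild subtlety worth stating explicitly is that one must check $\phi$ defines a vertex map at all, which is automatic because singletons are always simplices (diameter zero), so $V(\K_\delta^\vr(X)) = X$ for every $X$ and every $\delta \geq 0$; after that everything follows formally.
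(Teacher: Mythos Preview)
Your proof is correct and follows exactly the approach the paper indicates: the paper does not give a proof at all, merely stating that the proposition ``follows from the definitions,'' and your argument is precisely the routine unpacking of those definitions (monotonicity of diameter under $1$-Lipschitz maps for well-definedness on morphisms, then the two functor axioms). There is nothing to add.
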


Another example of a filtered simplicial complex is the \v{C}ech complex.

\begin{definition}[\v{C}ech Complex]
Given $(X,d_X) \in \mathcal{M}$ and $\delta > 0$, define
$$ \check{\mathrm{C}}_\delta(X) := \{ \sigma \subseteq X~|~\min_{x \in X} \max_{p \in \sigma} d_X(x,p) \leq \delta \}. $$
\end{definition}

It is an easy exercise to check that for every $(X,d_X) \in \M$, $\{\check{\mathrm{C}}_\delta(X) \}_{\delta \in \mathbb{R}_+} $ is a filtered simplicial complex. 

We have now explained the second step of the persistent homology pipeline. The third step is applying the homology map on a filtered simplicial complex to obtain a persistence vector space.

\begin{definition}[Persistence Vector Space\cite{carlsson_2014}] \label{def:pvs}
A persistence vector space $\Vb$ over a field $\F$ is a collection of vector spaces $\{V_\delta\}_{\delta \in \mathbb{R}_+} $ over $\F$ and $\F$-linear maps $\{V_\delta \xrightarrow{v_{\delta,\delta'}} V_{\delta'} \} $ with the following properties:
\begin{enumerate}
\item For all $\delta \geq 0$, the map $v_{\delta,\delta} : V_\delta \rightarrow V_\delta$ is the identity map on $V_\delta$.
\item For all $\delta'' \geq \delta' \geq \delta$, the following diagram commutes:
\[
\begin{tikzcd}
V_\delta \arrow[drur, "v_{\delta,\delta''}", bend right=50] \arrow[r,"v_{\delta,\delta'} "] & V_{\delta'} \arrow[r, "v_{\delta',\delta''} "] & V_{\delta''}
\end{tikzcd}
\]
Precisely, we have $v_{\delta,\delta''} = v_{\delta',\delta''} \circ v_{\delta,\delta'} $. 
\end{enumerate}
We use $ P\V(\F)$ to denote the collection of all persistence vector spaces over the field $\F$.
\end{definition}

Given a finite metric space $(X,d_X)$, a filtered simplicial complex $\{\K_\delta(X) \}_{\delta \in \mathbb{R}_+} $ and a sequence $0 \leq \delta_1 \leq \ldots \leq \delta_{n} $, for every $k \in \mathbb{Z}_+$ the sequence
$$ H_k(\K_{\delta_1}(X),\F) \rightarrow H_k(\K_{\delta_2}(X),\F) \rightarrow \ldots \rightarrow H_k(\K_{\delta_{n-1}}(X),\F) \rightarrow H_k(\K_{\delta_n}(X),\F)  $$
forms a persistence vector space (where the maps are induced by the simplicial inclusions), since $H_k(\ast,\F):\Sc \rightarrow \V_\F$ is a functor. 

\begin{definition}[Morphisms of Persistence Vector Spaces\cite{carlsson_2014}]
Given $\mathbb{V}, \mathbb{W} \in P\mathcal{V}(\mathbb{F}) $, a morphism $\alpha : \mathbb{V} \rightarrow \mathbb{W} $ is a collection of linear maps $\alpha_\delta:V_\delta \rightarrow W_\delta $, $\delta \in \mathbb{R}_+$, such that the following diagram commutes for every $\delta \leq \delta' $:
\[
\begin{tikzcd}
V_\delta \arrow[r, "v_{\delta,\delta'}"] \arrow[d, "\alpha_\delta"] 
& V_{\delta'} \arrow[d, "\alpha_{\delta'}"] \\
W_\delta \arrow[r, "w_{\delta, \delta'}"] 
& W_{\delta}
\end{tikzcd}
\]
We say that $\alpha:\mathbb{V} \rightarrow \mathbb{W} $ is an isomorphism if each $\alpha_\delta : V_\delta \rightarrow W_\delta$ is an isomorphism of vector spaces. In this case, we write $\Vb \cong \Wb $.
\end{definition}

It will be useful to consider persistence vector spaces of finite length (indexed by natural numbers). A persistence vector space of length $n\in\N$ is any sequence $\{ V_i \xrightarrow{v_{i,i+1}} V_{i+1} \}_{i \in [1:n-1]}$ of vector spaces over $\F$ and $\F$-linear maps. 
In analogy with Definition \ref{def:pvs}, here we assume that the map $v_{i,j} = v_{j-1,j}\circ v_{j-2,j-1}\circ\cdots v_{i,i+1}$ for all $1\leq i< j\leq n$ and $v_{i,i}=\mathrm{id}_{V_i}$ for all $i\in[1:n].$ For $n \in \mathbb{Z}_+$, let $P\V_n(\F) $ denote the collection of all persistence vector spaces over the field $\F$ of length $n$.

\begin{definition}[Sampling map] \label{def:samplingmap}
Let $\mathbb{V}$ be any persistence vector space. Given a finite set $A\subset \mathbb{R}_+$ with $|A|=n$, we write $A=\{\alpha_1<\alpha_2<\cdots \alpha_n\}$ and consider the $A$-\emph{sampling map}
$$\mathbf{S}(\cdot,A):P\V(\F)\rightarrow P\V_n(\F)$$
defined by $$\{V_\delta \xrightarrow{v_{\delta,\delta'}} V_{\delta'} \}  =\mathbb{V}\mapsto \mathbb{V}^A=\{V_1^A \xrightarrow{v^A_{1,2}} V_2^A \xrightarrow{v^A_{2,3}} \cdots \xrightarrow{v^A_{n-1,n}} V_n^A\};$$  where for each $i\in[1:n]$, $V_i^A:=V_{\alpha_i}$, and $v_{i,i+1}^A:=v_{\alpha_i,\alpha_{i+1}}.$
\end{definition}
We now concentrate on persistence vector spaces of length $n$ and describe a full invariant for those. An invariant of persistent modules is any map $\iota:P\V_n(\F) \rightarrow \mathcal{I}$ into some set $\mathcal{I}$ such that $\mathbb{V}\cong \mathbb{W}$ implies $\iota(\mathbb{V}) = \iota(\mathbb{W})$. An invariant $\iota$ is a \emph{full invariant} if $\iota(\mathbb{V}) = \iota(\mathbb{W})$ implies that $\mathbb{V}\cong \mathbb{W}$.

The full invariants of persistence vector spaces are called Persistence Diagrams and will help us associate an algebraic signature to finite metric spaces.

\paragraph{Persistence diagrams of persistence vector spaces of length $n$.}

We now assume that for every $\Vb \in P\V_n(\F)$ and for every $i \in [1:n]$, $\dim(V_i) < \infty$. Thus, $P\V_n(\F)$ is the collection of all \emph{pointwise finite dimensional (pfd) persistence vector spaces} of length $n$, $n \in \mathbb{Z}_+$. The reason behind this assumption is that such persistence vector spaces have a simple representation in terms of interval persistence vector spaces.

In the same way that finite dimensional vector spaces can be classified up to isomorphism by their dimension, finite length persistence vector spaces admit a classification based on certain finite multisets of points in the plane. In particular, it is not true that  persistence vector spaces can be classified by the sequence of dimensions.

\begin{example}
Assume that $\mathbb{V}\in P\V_n(\mathbb{F})$. Consider the vector $$\mathrm{dim}(\mathbb{V}):=\big(\dim(V_1),\dim(V_2),\ldots,\dim(V_n)\big).$$
We claim that there exists a natural number $n$ and $\mathbb{V},\mathbb{W}\in P\V_n(\mathbb{F})$ such that $\mathbb{V}\not\cong\mathbb{W}$ but $\dim(\mathbb{V})=\dim(\mathbb{W})$.
This can be seen from the following example: let $\Vb = \F \xrightarrow{v_{1,2}} \F^2 \xrightarrow{v_{2,3}} \F$, where $v_{1,2}(1_\F) = (1_\F,0_\F)$, and $v_{2,3}((1_\F,0_\F)) = v_{2,3}((0_\F,1_\F)) = 1_\F$. Let $\Wb = \F \xrightarrow{w_{1,2}} \F^2 \xrightarrow{w_{2,3}} \F$, where $w_{1,2}(1_\F) = (1_\F,0_\F), w_{2,3}((1_\F,0_\F)) = 0_\F$, and $w_{2,3}((0_\F,1_\F)) = 1_\F$. We have $\dim(\Vb) = \dim(\Wb)$. Suppose $\Vb$ and $\Wb$ are isomorphic. Then, for $i= 1,2,3 $, there exist isomorphisms $\alpha_i : V_i \rightarrow W_i $ such that all squares in the following diagram commute:
\[
\begin{tikzcd}
\F \arrow[r,"v_{1,2}"] \arrow[d,"\alpha_1"] & \F^2 \arrow[r,"v_{2,3}"] \arrow[d, "\alpha_2"] & \F \arrow[d,"\alpha_3"] \\
\F \arrow[r,"w_{1,2}"] & \F^2 \arrow[r,"w_{2,3}"] & \F 
\end{tikzcd}
\]
Let $\alpha_1(1_\F) = a \cdot 1_\F$, and $\alpha_3(1_\F) = b \cdot 1_\F$, where $0 \neq a,b \in \F$. Here, $a,b \neq 0$ because both $\alpha_1$ and $\alpha_3$ are isomorphisms. Now, we have $v_{1,3}(1_\F) = v_{2,3} \circ v_{1,2}(1_\F) = 1_\F$, and similarly $w_{1,3}(1_\F) = w_{2,3} \circ w_{1,2}(1_\F) = 0_\F$. Then, we obtain
$$ \alpha_3 \circ v_{1,3}(1_\F) = b \cdot 1_\F \neq 0_\F = w_{1,3} \circ \alpha_1(1_\F). $$
This contradicts the commutativity of all squares. Thus, we conclude that $\Vb$ and $\Wb$ are not isomorphic.
\end{example}

The above example shows that for $\Vb \in P\V_n(\F)$, $\dim(\Vb)$ is not a full invariant of $\Vb$. The construction of a full invariant of a persistence vector space requires a more subtle approach which depends on the notion of persistence diagrams of persistence vector spaces (see Corollary \ref{coro:full} below). 

\begin{definition}[Interval persistence vector space]
Given $n \in \mathbb{N}$ and $b,d \in [1:n],~b \leq d$, an interval persistence vector space is defined as follows: $V_i = 0$ for all $i < b$ and $i > d$, and $V_i = \F$ for all $b \leq i \leq d$. The map between the $0$-vector spaces, as well as maps $0 \rightarrow \F $ and $\F \rightarrow 0$ are specified to be the $0$-maps. The maps $\F \rightarrow \F$ are specified to be identity maps. Such a persistence vector space is denoted by $\I(b,d)$. Thus, we have
$$ \mathbb{I}(b,d) = 0 \rightarrow \ldots \rightarrow 0 \rightarrow \F \rightarrow \F \rightarrow \ldots \rightarrow \F \rightarrow 0 \ldots \rightarrow 0.  $$
\end{definition}

An example of an interval persistence vector space is $\I(2,3) = 0 \rightarrow \F \rightarrow \F \rightarrow 0$. We now have the following theorem.

\begin{theorem}[\cite{CB12}]
For every $\Vb \in P\V_n(\F)$, there exist intervals $[b_i,d_i]_{i \in I} $, $I$ an indexing set such that for every $i \in I$, $b_i, d_i \in [1:n] $, and $\Vb \cong \bigoplus_{i \in I}\I(b_i,d_i) $.
\end{theorem}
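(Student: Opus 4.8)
The plan is to prove the structure theorem for pointwise finite-dimensional persistence vector spaces of length $n$ by induction on $n$, using a standard linear-algebra argument that successively splits off interval summands from the left end of the sequence. The base case $n=1$ is immediate: a single finite-dimensional vector space $V_1 \cong \F^{\dim V_1}$ decomposes as a direct sum of $\dim V_1$ copies of $\I(1,1)$. For the inductive step, I would assume the result for all persistence vector spaces of length $n-1$ and consider $\Vb = \{V_1 \xrightarrow{v_{1,2}} V_2 \xrightarrow{v_{2,3}} \cdots \xrightarrow{v_{n-1,n}} V_n\}$.

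The key technical step is the following splitting at the first slot. Write $V_1 = K_1 \oplus L_1$, where $K_1 = \ker(v_{1,2})$ and $L_1$ is any chosen complement of $K_1$ in $V_1$, so that $v_{1,2}$ restricts to an injection on $L_1$. The subspace $K_1$ contributes a direct sum of $\dim K_1$ copies of $\I(1,1)$, since those basis vectors die immediately. For the complement $L_1$, I would want to extend $v_{1,2}(L_1) \subseteq V_2$ to a "compatible" complement at every later stage. Concretely, one builds a sub-persistence-vector-space $\Wb = \{W_1 \xrightarrow{} W_2 \xrightarrow{} \cdots \xrightarrow{} W_n\}$ of $\Vb$ with $W_1 = L_1$, with each $W_{i+1} \supseteq v_{i,i+1}(W_i)$ a subspace on which the restricted maps remain compatible, and a complementary sub-persistence-vector-space $\Vb'$ with $V'_1 = K_1$ and $V'_i$ a complement of $W_i$ in $V_i$ for $i \geq 2$, such that $\Vb \cong \Wb \oplus \Vb'$ as persistence vector spaces. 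The point is to choose the complements $V'_i$ inductively so that $v_{i,i+1}$ maps $V'_i$ into $V'_{i+1}$: this is possible because one has freedom in choosing each complement, and one only needs it to contain the image of the previously chosen complement. Then $\Vb'$ restricted to slots $1, \ldots, n$ (with $V'_1 = K_1$ handled by $\I(1,1)$'s) and $\Wb$ restricted to slots $2, \ldots, n$ are each, after removing the now-redundant first slot, persistence vector spaces whose "new" first map is injective — and by the induction hypothesis applied to the length-$(n-1)$ truncations, they decompose into interval modules; prepending slot $1$ back (where $\Wb$ has an injective map $L_1 \hookrightarrow W_2$ and $\Vb'$ has $K_1 \to 0$) turns each interval $\I(b,d)$ over $[2:n]$ with $b = 2$ into $\I(1,d)$ and leaves the others unchanged. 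Bookkeeping of the indices then yields $\Vb \cong \bigoplus_{i\in I} \I(b_i, d_i)$ with all $b_i, d_i \in [1:n]$.

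An alternative, slightly cleaner route is to invoke the structure theorem for finitely generated graded modules over the polynomial ring $\F[t]$: a length-$n$ pfd persistence vector space corresponds to such a module (with $t$ acting by the composite maps, and truncating degrees above $n$), and the classification of finitely generated $\F[t]$-modules into cyclic pieces $\F[t]/(t^k)$ and free pieces translates exactly into a direct sum of interval modules $\I(b,d)$. I would mention this as a remark but carry out the elementary inductive argument above as the main proof, since it is self-contained given only the linear algebra already developed in the paper.

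The main obstacle I anticipate is making the inductive choice of complements genuinely compatible across all $n$ stages simultaneously — i.e., verifying that one can choose $V'_{i+1} \supseteq V'_i$'s image while keeping $V'_{i+1}$ a complement of $W_{i+1}$, and that the resulting maps assemble into honest morphisms of persistence vector spaces so that the direct-sum decomposition holds on the nose (not just slotwise). This is where the commutativity conditions in the definition of a persistence vector space must be used carefully, and it is the step most prone to hidden circularity if one is not explicit about the order of the choices. Everything else — the base case, re-indexing intervals, and counting — is routine.
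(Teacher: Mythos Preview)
The paper does not prove this theorem at all: it is stated with a citation to \cite{CB12} and immediately followed by the Krull--Remak--Schmidt--Azumaya uniqueness statement, with no argument given. So there is no ``paper's own proof'' to compare against, and your proposal should be judged on its own.

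Your overall strategy --- induct on $n$, peel off the kernel at slot $1$ as copies of $\I(1,1)$, then split the rest into a submodule $\Wb$ carrying the image of $V_1$ and a complementary submodule $\Vb'$ --- is the right shape. The alternative route via finitely generated graded $\F[t]$-modules that you mention would also work and is arguably cleaner.

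There is, however, a genuine gap in the inductive step as you describe it. You write that one can choose the complements $V_i'$ forward, because ``one only needs [$V_{i+1}'$] to contain the image of the previously chosen complement.'' That is not sufficient: for $V_{i+1}'$ to be a complement of $W_{i+1}$ \emph{and} to contain $v_{i,i+1}(V_i')$, you need $v_{i,i+1}(V_i') \cap W_{i+1} = 0$, and this is not automatic. Concretely, take $n=3$, $V_1=\F$, $V_2=\F^2$, $V_3=\F$, $v_{1,2}(1)=(1,0)$, $v_{2,3}(a,b)=a+b$. Then $K_1=0$, $L_1=V_1$, $W_2=\mathrm{span}\{(1,0)\}$, $W_3=\F$. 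If you pick the ``obvious'' complement $V_2'=\mathrm{span}\{(0,1)\}$ (which certainly contains the image of $V_1'=0$), then $v_{2,3}(V_2')=\F=W_3$, and there is no complement $V_3'$ of $W_3$ containing it. The correct choice $V_2'=\mathrm{span}\{(1,-1)\}$ exists, but finding it requires looking \emph{ahead} at $v_{2,3}$, not back at $V_1'$. To your credit, you flag exactly this step as the main obstacle; but your one-line justification for why it goes through is wrong.

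The standard repair is to choose the complements in the opposite order: set $W_i=\mathrm{im}(v_{1,i})$, pick $V_n'$ to be any complement of $W_n$, and then inductively choose $V_{i}'$ to be a complement of $W_i$ inside $v_{i,i+1}^{-1}(V_{i+1}')$. One checks that $W_i + v_{i,i+1}^{-1}(V_{i+1}') = V_i$ (using surjectivity of $v_{i,i+1}\colon W_i \to W_{i+1}$), so such a complement always exists. With this backward construction the rest of your outline --- applying the inductive hypothesis to the length-$(n-1)$ truncation and re-indexing --- goes through as you describe.
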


Furthermore, we have the following theorem.

\begin{theorem}[Krull-Remak-Schmidt-Azumaya \cite{azumaya_1950}] 
Let $\Vb \in P\V_n(\F)$ and $\mathbb{V} = \bigoplus_{i \in I} \I(b_i,d_i) = \bigoplus_{j \in J} \I(b_j,d_j) $ be two decompositions of $\mathbb{V} $ into interval persistence vector spaces. Then, $|I| = |J| $ and there exists a permutation $\pi \in S_N $, $N = |I|$ such that for all $i \in I$, there exists $j \in J$ satisfying $i = \pi(j)$.
\end{theorem}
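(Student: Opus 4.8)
The plan is to derive this result as a special case of the general Krull–Remak–Schmidt theorem for modules of finite length, by first reinterpreting persistence vector spaces of length $n$ as modules over a suitable finite-dimensional algebra and then invoking the structure theory of such modules. Concretely, a persistence vector space $\Vb\in P\V_n(\F)$ is precisely a finite-dimensional representation of the quiver $A_n$ (the linear quiver $1\to 2\to\cdots\to n$), equivalently a finite-dimensional module over the path algebra $\F A_n$, which is itself a finite-dimensional $\F$-algebra. Since each $V_i$ is finite dimensional, $\Vb$ has finite length as an $\F A_n$-module. The indecomposable $\F A_n$-modules are exactly the interval persistence vector spaces $\I(b,d)$ (this is Gabriel's theorem for type $A$, already implicitly invoked in the preceding decomposition theorem from \cite{CB12}), and each $\I(b,d)$ has local endomorphism ring — indeed $\mathrm{End}_{\F A_n}(\I(b,d))\cong\F$, which is a field and hence local. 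The classical Krull–Remak–Schmidt theorem then asserts that any two decompositions of a module into indecomposables with local endomorphism rings agree up to reordering and isomorphism, which is exactly the claim.

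First I would set up the dictionary: record that objects of $P\V_n(\F)$ with pointwise finite dimension are the same as finite-dimensional $\F A_n$-modules, that morphisms of persistence vector spaces correspond to module homomorphisms, and that direct sums correspond on both sides. Second, I would observe that such a module has a composition series of finite length (its total dimension $\sum_i\dim V_i$ bounds the length), so the finite-length hypothesis of the abstract theorem is met. Third, I would verify the key local-endomorphism-ring property: an endomorphism of $\I(b,d)$ is determined by a scalar acting on each nonzero $V_i=\F$, and compatibility with the identity transition maps forces all these scalars to be equal, so $\mathrm{End}(\I(b,d))\cong\F$; a field is a local ring (its only non-unit is $0$). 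Fourth, I would cite the abstract Krull–Remak–Schmidt–Azumaya theorem for modules: if $M=\bigoplus_{i\in I}M_i=\bigoplus_{j\in J}N_j$ with all $M_i,N_j$ indecomposable and having local endomorphism rings, then $|I|=|J|$ and there is a bijection matching the summands up to isomorphism. Applying this to $\Vb$ with $M_i=\I(b_i,d_i)$ and $N_j=\I(b_j,d_j)$ yields the permutation $\pi\in S_N$ in the statement.

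The main obstacle — or rather the main thing to be careful about — is not a deep difficulty but a matter of either quoting or reproving the abstract Azumaya/Krull–Remak–Schmidt machinery at the right level of generality. One option is simply to cite \cite{azumaya_1950} (as the statement already does) for modules over arbitrary rings with local endomorphism rings of the summands. The alternative, more self-contained route avoids module theory entirely: one shows directly that the multiset $\{(b_i,d_i)\}_{i\in I}$ is determined by $\Vb$ by extracting it from rank data, namely that the multiplicity of $\I(b,d)$ in any decomposition equals $\big(\mathrm{rk}(v_{b,d})-\mathrm{rk}(v_{b-1,d})\big)-\big(\mathrm{rk}(v_{b,d+1})-\mathrm{rk}(v_{b-1,d+1})\big)$ (with the convention that ranks of maps into or out of the zero spaces $V_0,V_{n+1}$ vanish); since $\mathrm{rk}(v_{i,j})$ is an isomorphism invariant of $\Vb$, the multiset is well defined independently of the chosen decomposition, which gives uniqueness without appeal to Azumaya. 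I would present the module-theoretic argument as the main line and remark that the rank-function computation gives an elementary alternative proof; either way the combinatorial bookkeeping (the inclusion–exclusion in the rank formula, or the bijection in the abstract theorem) is entirely routine.
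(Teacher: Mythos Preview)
The paper does not actually prove this theorem; it is stated as a classical result with a bare citation to \cite{azumaya_1950} and no argument is given. Your proposal therefore goes well beyond what the paper does: you supply a genuine proof sketch, correctly identifying $P\V_n(\F)$ with finite-dimensional $\F A_n$-modules, verifying that $\mathrm{End}(\I(b,d))\cong\F$ is local, and then invoking the abstract Krull--Remak--Schmidt theorem. Your alternative route via the rank-invariant inclusion--exclusion formula is also correct and is in fact the more common self-contained argument in the persistence literature. Either approach is sound; the paper simply outsources the whole thing to the cited reference.
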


A consequence of the above theorems is that for every $\Vb \in P\V_n(\F)$, if $\Vb = \bigoplus_{i \in I}\I(b_i,d_i)$, then the multiset $\lc(b_i,d_i) \rc_{i \in I} $ is a full invariant of $\Vb$. This multiset is called the \emph{persistence diagram} of $\Vb$, and this brings us to the fourth step of the persistent homology pipeline.

\begin{definition}[Persistence Diagram]
Let $n \in \mathbb{N}$ and $\Vb \in P\V_n(\F)$ be a persistence vector space. Let $\Vb = \bigoplus_{i \in I}\I(b_i,d_i) $. Then, the persistence diagram of $\Vb$, denoted by $\dgm(\Vb)$ is defined as the multiset of intervals $\lc (b_i,d_i) \rc_{i \in I} $. 
\end{definition}

\begin{corollary}\label{coro:full}
For any $\mathbb{V},\mathbb{W}\in P\V_n(\mathbb{F})$ it holds that $\mathbb{V}\cong \mathbb{W}$ if and only if $\mathrm{dgm}(\mathbb{V})=\mathrm{dgm}(\mathbb{W}).$
\end{corollary}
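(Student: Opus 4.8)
The plan is to derive the corollary directly from the structure theorem (\cite{CB12}) and the Krull--Remak--Schmidt--Azumaya theorem (\cite{azumaya_1950}) quoted just above, together with the elementary observation that direct sums of persistence vector spaces respect isomorphism. Neither direction requires anything beyond these inputs; the whole argument is bookkeeping with multisets.

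For the forward implication, suppose $\Vb \cong \Wb$. By the structure theorem, fix decompositions $\Vb \cong \bigoplus_{i \in I} \I(b_i,d_i)$ and $\Wb \cong \bigoplus_{j \in J} \I(b'_j,d'_j)$ into interval persistence vector spaces. Composing the first with the given isomorphism $\Vb \cong \Wb$ exhibits two decompositions of the single persistence vector space $\Wb$ into interval summands, so the Krull--Remak--Schmidt--Azumaya theorem yields a bijection $I \to J$ matching $(b_i,d_i)$ with $(b'_j,d'_j)$. Hence the multisets $\lc (b_i,d_i) \rc_{i\in I}$ and $\lc (b'_j,d'_j) \rc_{j \in J}$ coincide, i.e.\ $\dgm(\Vb) = \dgm(\Wb)$. (A subtlety worth spelling out explicitly is that $\dgm(\Vb)$ is well defined to begin with, i.e.\ independent of the chosen decomposition; this is the same Krull--Remak--Schmidt--Azumaya argument applied with $\Wb = \Vb$ and the identity isomorphism.)

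For the reverse implication, suppose $\dgm(\Vb) = \dgm(\Wb)$, and again fix decompositions $\Vb \cong \bigoplus_{i\in I} \I(b_i,d_i)$ and $\Wb \cong \bigoplus_{j\in J} \I(b'_j,d'_j)$ from the structure theorem. Equality of the associated multisets gives a bijection $\pi : I \to J$ with $(b_i,d_i) = (b'_{\pi(i)},d'_{\pi(i)})$ for every $i$, so $\I(b_i,d_i) = \I(b'_{\pi(i)},d'_{\pi(i)})$ as persistence vector spaces. It then remains to record that reindexing and forming direct sums preserves isomorphism: if $\{\mathbb{A}_i\}_{i \in I}$ and $\{\mathbb{B}_i\}_{i \in I}$ are families in $P\V_n(\F)$ with $\mathbb{A}_i \cong \mathbb{B}_i$ for each $i$, then the direct sum of the componentwise isomorphisms is pointwise bijective and is a morphism of persistence vector spaces (each compatibility square commutes because it commutes in every summand), hence $\bigoplus_i \mathbb{A}_i \cong \bigoplus_i \mathbb{B}_i$. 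Chaining the isomorphisms gives $\Vb \cong \bigoplus_{i\in I}\I(b_i,d_i) \cong \bigoplus_{j\in J}\I(b'_j,d'_j) \cong \Wb$.

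The only point that takes a little care --- and which I expect to be the main (and quite mild) obstacle --- is that the direct sum of persistence vector spaces has not been formally introduced, so one must define it (pointwise direct sum of the vector spaces, with the block-diagonal structure maps) and check that componentwise isomorphisms glue into an honest isomorphism in $P\V_n(\F)$. Once that is in place, together with well-definedness of $\dgm$, the equivalence is immediate from the two cited theorems.
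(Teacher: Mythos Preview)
Your proposal is correct and follows exactly the approach the paper takes: the corollary is stated without proof, as an immediate consequence of the structure theorem \cite{CB12} and the Krull--Remak--Schmidt--Azumaya uniqueness theorem \cite{azumaya_1950} quoted just before it. Your write-up simply makes explicit the bookkeeping the paper leaves to the reader.
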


Let $\D$ denote the collection of all multisets $\lc(b_i,d_i)\rc_{i\in I}$, where $b_i\leq d_i$ are non-negative real numbers, and for $n \in \N$, let $\D_{[1:n]}$ denote the collection of all multisets $\lc (b_i,d_i) \rc_{i \in I}$, where $b_i \leq d_i$ and $ \{ b_i, d_i~|~i \in I \} \subseteq [1:n]$.

We note that for any $\Vb \in P\V_n(\F)$, $\dgm(\Vb)$ is a collection of points in $\mathbb{R}^2$. For $\dgm(\Vb) = \lc(b_i,d_i) \rc_{i \in I} $, the $b_i$'s are referred to as the \emph{birth times} and are represented on the x-axis, while $d_i$'s are referred to as the \emph{death times} and are represented on the y-axis. Since $b_i \leq d_i$ for all $i \in I$, the points of $\dgm(\Vb)$ lie on or above the $x=y$ line in $\mathbb{R}^2$. For example, consider $\Vb = \I(1,5) \oplus \I(3,4)$. Then, the persistence diagram of $\Vb$ is depicted in the following figure:

\begin{center}
\begin{tikzpicture}
\begin{axis}[my style, xtick={1,...,7}, ytick={0,1,...,6},
xmin=1, xmax=7, ymin=0, ymax=7, xlabel={$b(birth)$}, ylabel={$d(death)$}, xlabel near ticks, ylabel near ticks]
\addplot[domain=0:6]{x};
\addplot[mark=*, only marks] coordinates {(1,5) (3,4)};
\end{axis}
\end{tikzpicture}
\end{center}
Another way of depicting $\dgm(\Vb)$ is through \emph{barcodes}. The following diagram depicts the barcode of $\Vb = \I(1,5) \oplus \I(3,4)$.
\begin{center}
\begin{tikzpicture}
\begin{axis}[my style, xtick={1,...,7}, ytick={0},
xmin=1, xmax=7, ymin=0, ymax=7]
\addplot[domain = 1:5]{1};
\addplot[domain = 3:4]{2};
\end{axis}
\end{tikzpicture}
\end{center}

\paragraph{Vietoris-Rips persistence diagrams of finite metric spaces.}

Now, given $(X,d_X) \in \M$, we define the spectrum of $(X,d_X)$ as 
$$ \mathrm{spec}(X) := \{ d_X(x,x')~|~x,x' \in X \}. $$ Let $n=|\mathrm{spec}(X)|$ and write $\mathrm{spec}(X) = \{0=\delta_1 <\delta_2<\ldots<\delta_n = \diam(X)\}$.  
Given an integer $k\geq 0$ we consider the persistence vector space $\mathbb{V}$ of length $n$ defined as  
$$\mathbb{V}_k^X:=\mathbf{S}\big(H_k\circ\mathrm{K}_\bullet^\mathrm{VR}(X),\mathrm{spec}(X)\big).$$

Here, $\mathbf{S}$ is the sampling map as given in definition \ref{def:samplingmap}. Now we need a process that is in some sense dual to sampling. Given a finite set $\{\alpha_1<\cdots <\alpha_n\} = A\subset \mathbb{R}_+$ with $|A| = n$, we define a map $$\mathbf{T}(\cdot,A): \mathcal{D}_{[1:n]}\rightarrow \mathcal{D}$$
to be the function satisfying
\begin{enumerate}
    \item (Additivity) For all $D_1, D_2 \in \D_{[1:n]}$, $\mathbf{T}(D_1 \sqcup D_2,A) = \mathbf{T}(D_1,A) \sqcup \T(D_2,A)$.
    \item (Definition on atomic elements) We define
    \begin{enumerate}
        \item $\T(\lc (1,n) \rc,A) := \lc (\alpha_1, \infty) \rc $.
        \item For all $j \in [1:n]$, $\T(\lc (j,j) \rc,A ) := \lc (\alpha_j, \alpha_{j+1}) \rc$.
        \item For all $i,j \in [1:n]$ with $i<j$ and $(i,j) \neq (1,n)$, $\T(\lc (i,j) \rc,A) := \lc (\alpha_i, \alpha_j) \rc $.
    \end{enumerate}
\end{enumerate}

Note that these properties uniquely determine the map $\T(\cdot,A)$. We illustrate how the map $\T(\cdot,A)$ works via the following example.

\begin{example}
Let $n \geq 3$, and $A = \{\alpha_1 < \ldots < \alpha_n \} \subset \mathbb{R}_+$. Let $D = \lc (1,1), (1,2), (1,n), (2,3) \rc$. Then, we have that
\begin{align*}
    \T(D,A) &= \T(\lc (1,1) \rc,A) \sqcup \T(\lc (1,2) \rc,A) \sqcup \T(\lc (1,n) \rc,A) \sqcup \T(\lc (2,3),A \rc) \\
    &= \lc (\alpha_1,\alpha_2 \rc \sqcup \lc (\alpha_1, \alpha_2) \rc \sqcup \lc (\alpha_1, \infty) \rc \sqcup \lc (\alpha_2,\alpha_3) \rc \\
    &= \lc (\alpha_1,\alpha_2), (\alpha_1,\alpha_2), (\alpha_1, \infty), (\alpha_2,\alpha_3) \rc. 
\end{align*}
\end{example}

We now have the following definition.

\begin{definition}[$k$-th Vietoris-Rips Persistence Diagram]
Given $(X,d_X) \in \M $ and $ k \in \mathbb{N}$, the $k$-th Vietoris-Rips persistence diagram of $(X,d_X)$ is defined as the $$\mathrm{dgm}^\mathrm{VR}_k(X):=\mathbf{T}\big(\mathrm{dgm}(\mathbb{V}_k^X),\mathrm{spec}(X)\big).$$ 
\end{definition}

\begin{example}
We now provide an example to illustrate the definitions. Let $X = \{a,b \} $ with $d_X(a,b) = 1$. The filtered Vietoris-Rips simplicial complex of $X$ is as follows:
$$ \K_\delta^\vr(X) = \{ [a], [b] \}~\forall~\delta < 1~\mbox{and}~\K_\delta^\vr(X) = \{ [a,b ], [a], [b] \}~\forall~\delta \geq 1. $$
Let $\K^1 = \{ [a], [b] \} $ and $\K^2 = \{[a,b], [a], [b] \}$. The set of $0$-simplices of $\K^1$ is $\K^1_n = \{ [a], [b] \} $ and for $n > 0$, the set of $n$-simplices of $\K^1$ is $\K^1_n = \emptyset$. The set of $0$-simplices of $\K^2$ is $\K^2_0 = \{ [a], [b] \}$, the set of $1$-simplices of $\K^2$ is $\K^2_1 = \{[a,b] \} $, and, for all $n \geq 2$, the set of $n$-simplices of $\K^2$ is $\K^2_n = \emptyset$. Thus, we have that $H_0(\K^1,\F) = \F^2$ and $H_k(\K^1,\F) = 0 $ for all $k \geq 1$. Similarly,  $H_0(\K^2,\F) = \F$ and $H_k(\K^2,\F) = 0$ for all $ k \geq 1$. This implies that $$H_0(\K_\bullet^\vr(X),\F) = \F^2 \rightarrow \ldots \rightarrow \F \rightarrow \ldots, $$
with the transition $\F^2 \rightarrow \F$ occurring at $\delta = 1$, and the notation $V \rightarrow \ldots $ meaning that all vector spaces hidden in the dots are $V$. Furthermore, we have that $H_k(\K_\bullet^\vr(X),\F) = 0$ for all $k \geq 1$. Thus, we have that $\dgm_k^\vr(X) = \emptyset$ for all $k \geq 1$. We now calculate $\dgm_0^\vr(X)$, using the maps $\mathbf{S}$ and $\mathbf{T}$ defined above. 

We have that $\mathrm{spec}(X) = \{0,1 \}$, and therefore $\mathbf{S}(H_0 \circ \K_\bullet^\vr(X),\mathrm{spec}(X))$ is a persistence vector space of length $2$ given by
$$ \Vb_0^X = \F^2 \xrightarrow{v_{1,2}} \F^1, $$
with the map $v_{1,2}$ being defined as $v_{1,2}((1_\F,0_\F)) = v_{1,2}((0_\F,1_\F)) = 1_\F$.

Clearly, we have $$\F^2 \xrightarrow{v_{1,2}} \F^1 \cong \big(\F \rightarrow \F \big) \oplus \big(\F \rightarrow 0\big).$$
Thus, we obtain $\dgm(\Vb_0^X) = \lc (1,2),(1,1) \rc $. We now apply the map $\mathbf{T}(\cdot, \mathrm{spec}(X))$ to $\dgm(\Vb_0^X)$ in order to obtain $\dgm_0^\vr(X)$. By definition, we have
$$ \mathbf{T}\left( \dgm(\Vb_0^X), \mathrm{spec}(X) \right) = \lc (0,\infty), (0,1) \rc. $$
Thus, we obtain $\dgm_0^\vr(X) = \lc (0,\infty),(0,1) \rc $.
\end{example}

\begin{example} \label{eg:4point}
\begin{figure}
    \centering
    \scalebox{2.5}{
    \begin{tikzpicture}
    \node (a) at (0,0) {$\scalebox{0.5}{1}$};
    \node (b) at (0,1) {$\scalebox{0.5}{2}$};
    \node (c) at (1,1) {$\scalebox{0.5}{3}$};
    \node (d) at (1,0) {$\scalebox{0.5}{4}$};
    \path[-] (a) edge node[left,blue]{$\scalebox{0.5}{1}$} (b);
    \path[-] (a) edge node[below,blue]{$\scalebox{0.5}{1}$} (d);
    \path[-] (b) edge node[above,blue]{$\scalebox{0.5}{1}$} (c);
    \path[-] (c) edge node[right,blue]{$\scalebox{0.5}{1}$} (d);
    \path[dotted,-] (a) edge node[right,blue]{$\scalebox{0.5}{2}$} (c);
    \path[dotted,-] (b) edge node[left,blue]{$\scalebox{0.5}{2}$} (d);
    \end{tikzpicture}
    }
    \caption{The $4$-point metric space $(X,d_X)$ of Example \ref{eg:4point}.}
    \label{fig:example4point}
\end{figure}
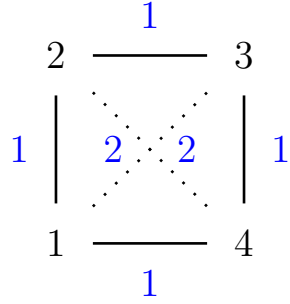
We now consider another example of a metric space with $4$ points, depicted in Figure \ref{fig:example4point}. This metric space is defined as follows:
$$ (X,d_X) = \left(\{1,2,3,4 \}, \begin{pmatrix}
0 & 1 & 2&1\\
1 &0&1&2 \\
2 & 1 & 0&1\\
1 &2 &1 & 0
\end{pmatrix}\right). $$
Thus, $X$ consists of the corners of a square of side length $1$, with $\ell_1$-distance. The filtered Vietoris-Rips simplicial complex of $X$ is as follows: 
$$\kvr_\delta(X) = \{[1],[2],[3],[4] \}~\forall~0 \leq \delta < 1,$$
$$ \kvr_\delta(X) = \{[1],[2],[3],[4],[1,2],[2,3],[3,4],[1,4] \}~\forall~1 \leq \delta < 2,$$
\begin{align*}
\kvr_\delta(X) =& \{[1],[2],[3],[4],[1,2],[2,3],[3,4],[1,4],[1,3],[2,4],[1,2,3],[1,2,4], \\
&[1,3,4],[2,3,4], [1,2,3,4] \}~\forall~\delta \geq 2.
\end{align*}
Let $\K^1 := \kvr_0(X)$, $\K^2 := \kvr_1(X)$, and $\K^3 = \kvr_2(X)$. For $i = \{1,2,3 \} $ and $j \in \mathbb{Z}_+$, let $\K^i_j$ denote set of $j$-simplices of $\K^i$. Then, we have that $\K^1_0 = \K^1$ and for $j \geq 1$, $\K^1_j = \emptyset$. This implies that $$H_0(\K^1,\F) = \F^{|\K^1|} = \F^4,$$
and $H_n(\K^1,\F) = 0$ for all $n \geq 1$. 

For $\K^2$, we have $\K^2_0 = \{[1],[2],[3],[4] \} $, $\K^2_1 = \{[1,2],[2,3],[3,4],[1,4] \}$ and for $j \geq 2 $, $\K^2_j = \emptyset $. The chain complex of $\K^2$ looks as $\langle \K^2_1 \rangle_\F \xrightarrow{\partial_1} \langle \K^2_0 \rangle_\F \xrightarrow{\partial_0} 0$. We have that $\B_0(\K^2,\F) = \mathrm{image}(\partial_1) =  \Span \left( [2]-[1], [3]-[2], [4]-[3], [4] - [1] \right)$. Clearly, $\dim(\B_0(\K^2,\F)) = 3$, and thus we obtain that 
$$H_0(\K^2,\F) = \frac{\F^4}{\F^3} = \F.$$ 
We also obtain that $\dim(\mathrm{ker}(\partial_1)) = 1$, and thus $$H_1(\K^2,\F) = \F.$$ 
Clearly, $H_n(\K^2,\F) = 0$ for all $n \geq 2$. 

For $\K^3$, we have $\K^3_0 = \{[1],[2],[3],[4] \} $, $\K^3_1 = \{[1,2],[2,3],[3,4],[1,4],[1,3],[2,4] \}$, $\K^3_2 = \{[1,2,3], [1,2,4], [1,3,4], [2,3,4] \}$, $\K^3_3 = \{[1,2,3,4] \}$, and $\K^3_n = \emptyset$ for all $n \geq 4$. The chain complex of $\K^3$ looks as 
$$\mathcal{C}_\ast(\K^3,\F) = \langle \K^3_3 \rangle_\F \xrightarrow{\partial_3} \langle \K^3_2 \rangle_\F \xrightarrow{\partial_2} \langle \K^3_1 \rangle_\F \xrightarrow{\partial_1} \langle \K^3_0 \rangle_\F \xrightarrow{\partial_0} 0.$$ 
We have $\B_0(\K^3,\F) = \mathrm{image}(\partial_1) = \Span \big([2]-[1], [3]-[2], [4]-[3], [4]-[1], [3]-[1], [4]-[2] \big) $. We observe that $\dim(\B_0(\K^3,\F)) = 3$, and therefore 
$$H_0(\K^3,\F) = \frac{\F^4}{\F^3} = \F.$$
We have $\Z_1(\K^3,\F) = \mathrm{ker}(\partial_1) =\Span \big([1,2]+[2,3]-[1,3], [2,3]+[3,4]-[2,4], [1,3]+[3,4]-[1,4], [1,2]+[2,4]-[1,4], [1,2]+[2,3]+[3,4]-[1,4] \big).$
It is an easy exercise to check that $\dim(\Z_1(\K^3,\F)) = 3$. We have $\B_1(\K^3,\F) = \mathrm{image}(\partial_2) = \Span \big([2,3]-[1,3]+[1,2], [2,4]-[1,4]+[1,2], [3,4]-[1,4]+[1,3], [3,4]-[2,4]+[2,3]\big)$. It is easy to see that $\dim(\B_1(\K^3,\F)) = 3$, and therefore, we obtain
$$H_1(\K^3,\F) = \frac{\F^3}{\F^3} = 0.$$
We have $\Z_2(\K^3,\F) = \mathrm{ker}(\partial_2) = \Span \big([1,2,3] + [1,3,4] - [1,2,4]-[2,3,4] \big)$, and therefore $\dim(\Z_2(\K^3,\F)) = 1$. We have $\B_2(\K^3,\F) = \mathrm{image}(\partial_3) = \Span \big( [2,3,4] - [1,3,4] + [1,2,4] - [1,2,3] \big) = \Z_2(\K^3,\F).$ This implies that 
$$H_2(\K^3,\F) = 0.$$
Clearly, for $n \geq 3$, $H_n(\K^3,\F) = 0$. 

Therefore, the homology groups of $\kvr_\bullet(X)$ are as follows:
$$ H_0(\kvr_\bullet(X),\F) = \F^4 \rightarrow \ldots \rightarrow \F \rightarrow \ldots,  $$
with the transition $\F^4 \rightarrow \F^1$ occurring at $\delta = 1$,
$$ H_1(\kvr_\bullet(X),\F) = 0 \rightarrow \ldots \rightarrow \F \rightarrow \ldots \rightarrow 0 \rightarrow \ldots, $$
with the transition $0 \rightarrow \F$ occurring at $\delta = 1$ and the transition $\F \rightarrow 0$ occurring at $\delta = 2$; and $H_n(\kvr_\bullet(X),\F) = 0$ for all $n \geq 2$. We now calculate $\dgm_0^\vr(X)$ and $\dgm_1^\vr(X)$ using maps $\mathbf{S}$ and $\mathbf{T}$. 

We have that $\mathrm{spec}(X) = \{0,1,2 \}$, and therefore for $k=0,1$, $\mathbf{S}(H_k \circ \kvr_\bullet(X), \mathrm{spec}(X))$ are persistence vector spaces of length $3$ given by
$$ \Vb_0^X = \F^4 \xrightarrow{v_{1,2}} \F \xrightarrow{v_{2,3}} \F~\mbox{and}~\Vb_1^X = 0 \xrightarrow{w_{1,2}} \F \xrightarrow{w_{2,3}} 0.$$
Here, we have that for all permutations $\sigma \in S_4$, $v_{1,2}(\sigma(1_\F,0_\F,0_\F,0_\F)) = 1_\F $, and $v_{2,3}(1_\F) = 1_\F$. The maps of $\Vb_1^X$ are the trivial maps. Clearly, we have
$$ \Vb_0^X \cong \big(\F \rightarrow \F \rightarrow \F \big)~\oplus~\big(\F \rightarrow 0 \rightarrow 0 \big)~\oplus~\big(\F \rightarrow 0 \rightarrow 0 \big)~\oplus~\big(\F \rightarrow 0 \rightarrow 0\big).$$
Thus, we obtain that $\dgm(\Vb_0^X) = \lc (1,3), (1,1), (1,1), (1,1) \rc $ and $\dgm(\Vb_1^X) = \lc (2,2) \rc $. We now apply the map $\mathbf{T}(\cdot, \mathrm{spec}(X))$ to both $\dgm(\Vb_0^X)$ and $\dgm(\Vb_1^X)$ to obtain
$$ \dgm_0^\vr(X) = \mathbf{T}(\dgm(\Vb_0^X), \mathrm{spec}(X)) = \lc (0,\infty), (0,1),(0,1),(0,1) \rc, $$
$$ \dgm_1^\vr(X) = \mathbf{T}(\dgm(\Vb_1^X), \mathrm{spec}(X)) = \lc (1,2) \rc. $$
\end{example}

\subsection{Interpretation of Clustering via 0-Dimensional Persistence Diagram} \label{subsec:interpretation}

Let $(X,d_X) \in \M$. We now make some observations about $\dgm_0^{\vr}(X)$. We first observe that the number of intervals in $\dgm_0^{\vr}(X)$ is equal to $|X|$. This is because by definition, $\K_0^{\vr}(X)$ consists of only singletons, and we know from Lemma \ref{lem:betti0} that $H_0(\K_0^{\vr}(X),\F) = \F^{r}$, where $r$ is the number of connected components of $\K_0^{\vr}(X)$. Thus, 
$$H_0(\K_0^{\vr}(X),\F) = \F^{|X|}.$$ 
This implies that there are $|X|$ intervals in the decomposition of $H_0(\K_\bullet^\vr(X),\F)$ into interval persistence vector spaces. We simultaneously obtain that if $H_0(\K_\bullet^\vr(X),\F) \cong \bigoplus_{i=1}^{|X|} \I(b_i,d_i) $, then $b_i=0$ for all $i \in [1:|X|] $. In the next proposition, we explicitly determine the intervals $\lc (b_i,d_i) \rc_{i=1}^{|X|}$ in $\dgm_0^{\vr}(X)$ and provide a method of associating an interval with every point of $X$.

We now recall the single linkage dendrogram of $(X,d_X)$, denoted by $\theta_X$. We have that for every $t \geq 0$, $\theta_X(t)$ is a partition of $X$, and for $t' \geq t$, $\theta_X(t)$ is finer than $\theta_X(t')$. Let $ \mathfrak{V}_\F : \pc \rightarrow \V_\F$ denote the functor defined as $\mathfrak{V}_\F(\{B_1, \ldots, B_k \}) = \F^k$. Here, $\{B_1, \ldots, B_k \} $ denotes a partition of some finite metric space. It is straightforward to check that $\mathfrak{V}_\F$ is a functor. We observe that $\{\mathfrak{V}_\F \circ \theta_X(t) \rightarrow \mathfrak{V}_\F \circ \theta_X(t') \}_{0 \leq t \leq t'}$ forms a pointwise finite dimensional persistence vector space, and thus admits an interval decomposition. Then, we have the following proposition. 

\begin{proposition}
For all $(X,d_X) \in \M$,
$$ \{\mathfrak{V}_\F \circ \theta_X(t) \rightarrow \mathfrak{V}_\F \circ \theta_X(t') \}_{0 \leq t \leq t'} \cong H_0(\K_\bullet^\vr(X),\F). $$
\end{proposition}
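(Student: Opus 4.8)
The plan is to build an explicit isomorphism of persistence vector spaces $\eta = \{\eta_t\}_{t\in\mathbb{R}_+}$, where each
$\eta_t\colon \mathfrak{V}_\F(\theta_X(t)) \to H_0(\K^\vr_t(X),\F)$ is a vector space isomorphism and the family $\eta$ commutes with the transition maps on both sides (so that it is a morphism, hence an isomorphism, of persistence vector spaces in the sense defined above). The construction rests on two ingredients. First, for every $t\geq 0$ the partition $\theta_X(t)=\C^{\vr_t}((X,d_X))=X/\!\sim_t$ is exactly the partition of the vertex set of $\K^\vr_t(X)$ into connected components of $\K^\vr_t(X)$. Second, the proof of Lemma \ref{lem:betti0} already manufactures, for any simplicial complex $\K$, a canonical isomorphism between $H_0(\K,\F)$ and the free vector space on the set of connected components of $\K$; specializing to $\K=\K^\vr_t(X)$ and invoking the first ingredient produces $\eta_t$.

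For the first ingredient, unwind definitions: $x\sim_t x'$ means there is a sequence $x=x_0,x_1,\dots,x_n=x'$ with $d_X(x_{i-1},x_i)\leq t$ for all $i$, i.e.\ a path in the graph on $X$ having an edge $\{x_{i-1},x_i\}$ whenever $d_X(x_{i-1},x_i)\leq t$; this graph is precisely the $1$-skeleton of $\K^\vr_t(X)$. Conversely, two vertices lie in the same connected component of $\K^\vr_t(X)$ iff they are linked by a chain of pairwise-intersecting simplices, and since any two vertices of a simplex $\sigma$ of $\K^\vr_t(X)$ already span an edge of $\K^\vr_t(X)$ (a subset of $\sigma$ has diameter $\leq \diam(\sigma)\leq t$), such a chain yields an edge-path and vice versa; hence component-equivalence in $\K^\vr_t(X)$ coincides with $\sim_t$. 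Mirroring the argument in the proof of Lemma \ref{lem:betti0}, the linear map $\langle X\rangle_\F = C_0(\K^\vr_t(X)) \to \mathfrak{V}_\F(\theta_X(t))$ sending each vertex to the basis vector indexed by its block is surjective with kernel $\B_0(\K^\vr_t(X),\F)=\mathrm{im}(\partial_1)$, so it descends to an isomorphism $\psi_t\colon H_0(\K^\vr_t(X),\F)\xrightarrow{\ \sim\ }\mathfrak{V}_\F(\theta_X(t))$, and we set $\eta_t:=\psi_t^{-1}$.

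It remains to verify naturality: for $t\leq t'$ the square built from $\eta_t,\eta_{t'}$, the transition map of $\mathfrak{V}_\F\circ\theta_X$ (namely $\mathfrak{V}_\F$ applied to the coarsening morphism $\mathrm{id}_X\colon(X,\theta_X(t))\to(X,\theta_X(t'))$, which carries the basis vector of a block of $\theta_X(t)$ to that of the block of $\theta_X(t')$ containing it), and the map $H_0(i^X_{t,t'},\F)$ induced by the inclusion $\K^\vr_t(X)\hookrightarrow\K^\vr_{t'}(X)$ (Proposition \ref{prop:functor}), commutes. Since $H_0(\K^\vr_t(X),\F)$ is spanned by the classes $[v]$, $v\in X$, it is enough to chase such a generator: $\psi_{t'}\circ H_0(i^X_{t,t'},\F)$ sends $[v]$ to the block of $v$ in $\theta_X(t')$ (using that $\overline{i^X_{t,t'}}$ is the identity on the common vertex set $X$), and $\big(\mathfrak{V}_\F(\mathrm{id}_X)\big)\circ\psi_t$ does the same by the description of the coarsening map; hence the $\psi$-square, and therefore the $\eta$-square, commutes. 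As each $\eta_t$ is a vector space isomorphism, $\eta$ is an isomorphism of persistence vector spaces, which is the assertion. I expect the only genuinely delicate step to be the first ingredient --- the reduction of simplicial connectivity in a Vietoris--Rips complex to the single-linkage relation $\sim_t$ --- while everything else is bookkeeping resting on Lemma \ref{lem:betti0} and Proposition \ref{prop:functor}; alternatively one could phrase the whole argument functorially by noting that $H_0(-,\F)$ factors up to natural isomorphism through ``connected components'' and the free-vector-space functor, but the concrete route above seems cleanest.
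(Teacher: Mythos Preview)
Your proof is correct and in fact considerably more complete than what the paper offers: the paper does not prove this proposition at all, instead declaring it ``an easy exercise'' and giving only the hint that whenever consecutive vector spaces in $H_0(\K_\bullet^\vr(X),\F)$ differ, this corresponds to a merging of bars in the dendrogram $\theta_X$. Your argument is the natural way to cash out that hint, and the key observation you isolate --- that the blocks of $\theta_X(t)$ are precisely the connected components of $\K_t^\vr(X)$ --- is exactly the content underlying the paper's remark; the remaining steps (the pointwise isomorphisms via Lemma~\ref{lem:betti0} and the naturality check via Proposition~\ref{prop:functor}) are the bookkeeping the paper omits.
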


The proof of the above proposition is an easy exercise, and uses the observation that in the persistence vector space $H_0(\K_\bullet^\vr(X),\F)$, every time consecutive vector spaces in the sequence $H_0(\K_\bullet^\vr(X),\F)$ are different, there is a merging of bars in the dendrogram $\theta_X$. Thus, we observe that $H_0(\K_\bullet^\vr(X),\F)$ is equivalent to single linkage clustering of $X$, and therefore persistent homology generalizes clustering. In the next proposition, we explicitly determine the interval decomposition of $H_0(\K_\bullet^\vr(X),\F)$, and provide a map that associates to every element of $X$, an interval of this decomposition.

\begin{proposition}
Let $(X,d_X)$ be a finite metric space, with $|X| = n$. Let $X = \{x_1, x_2,   \ldots, x_n\}$, and let $u_X $ be the maximal sub-dominant ultrametric (Definition \ref{def:ultrametric}) on $X$. Then, 
$$H_0(\kvr(X),\mathbb{Z}_2) \cong \bigoplus_{2 \leq i \leq n} \mathbb{I}(0, \min_{k < i}u_X(x_k, x_i)) \bigoplus \mathbb{I}(0, \infty). $$
\end{proposition}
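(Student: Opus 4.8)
The plan is to work directly at the chain level. Since $\kvr_0(X)$ already contains every singleton, the $0$-chain group $C_0=\langle[x_1],\dots,[x_n]\rangle\cong\mathbb{Z}_2^{\,n}$ is \emph{constant} throughout the filtration, every inclusion-induced map on $C_0$ being the identity; only the boundary spaces grow. Writing $\B_0^\delta:=\B_0(\kvr_\delta(X),\mathbb{Z}_2)=\mathrm{im}\big(\partial_1\colon C_1(\kvr_\delta(X))\to C_0\big)$, we get $H_0(\kvr_\delta(X),\mathbb{Z}_2)=C_0/\B_0^\delta$ (as $\Z_0=\ker\partial_0=C_0$), and for $\delta\le\delta'$ the transition map is the canonical surjection $C_0/\B_0^\delta\twoheadrightarrow C_0/\B_0^{\delta'}$. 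So the whole persistence vector space is encoded by the nested family $\{\B_0^\delta\}_{\delta\ge0}$ of subspaces of the fixed space $C_0$, and an interval decomposition amounts to exhibiting one basis of $C_0$ \emph{adapted} to this family. First I would identify $\B_0^\delta$: since $\partial_1[x,x']=[x']-[x]$ and $\{x,x'\}\in\kvr_\delta(X)$ iff $d_X(x,x')\le\delta$, while $u_X$ (Definition \ref{def:ultrametric}) is the path-minimax distance with $u_X\le d_X$, a telescoping sum $[x']-[x]=\sum_j([x_j]-[x_{j-1}])$ along a $\delta$-chain realizing $u_X(x,x')\le\delta$ yields $\B_0^\delta=\Span\{[x']-[x]:u_X(x,x')\le\delta\}$; equivalently, $[x_j]-[x_i]\in\B_0^\delta$ iff $x_i,x_j$ lie in the same connected component of $\kvr_\delta(X)$, i.e.\ the same block of single-linkage clustering at scale $\delta$ (consistently with Lemma \ref{lem:betti0} and with the earlier identification $H_0(\K_\bullet^\vr(X),\F)\cong\mathfrak{V}_\F\circ\theta_X$).

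Next I would build the adapted basis. For $i\ge2$ set $\delta_i:=\min_{k<i}u_X(x_k,x_i)$, fix a minimizer $k(i)<i$, and put $b_i:=[x_i]-[x_{k(i)}]\in C_0$; set $\delta_1:=\infty$. The edges $\{x_i,x_{k(i)}\}$, $i=2,\dots,n$, form a spanning tree of $X$ (each index $\ge2$ is joined to a strictly smaller one, giving $n-1$ edges whose union is connected by induction on $i$), so the matrix of $(b_2,\dots,b_n)$ with respect to $([x_2],\dots,[x_n])$ is triangular with unit diagonal once rows and columns are ordered by $i$ (each $b_i$ is supported on $x_i$ and $x_{k(i)}$ with $k(i)<i$); hence $b_2,\dots,b_n$ are linearly independent, and since every boundary has vanishing augmentation while $[x_1]$ does not, $\mathcal B:=\{[x_1],b_2,\dots,b_n\}$ is a basis of $C_0$.

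The crux is then to prove that for every $\delta\ge0$,
$$\B_0^\delta=\Span\{\,b_i:i\ge2,\ \delta_i\le\delta\,\}.$$
The inclusion $\supseteq$ is immediate from Step~1, since $u_X(x_{k(i)},x_i)=\delta_i$ gives $b_i\in\B_0^{\delta_i}\subseteq\B_0^\delta$. For equality I would match dimensions: by Lemma \ref{lem:betti0}, $\dim\B_0^\delta=n-\#\{\text{components of }\kvr_\delta(X)\}$, and the number of components equals the number of $x_i$ that are the smallest-indexed point of their component; a point $x_i$ \emph{fails} this exactly when some $x_k$ with $k<i$ satisfies $u_X(x_k,x_i)\le\delta$, i.e.\ exactly when $\delta_i\le\delta$. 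Hence $\#\{\text{components}\}=n-|\{i\ge2:\delta_i\le\delta\}|$, so $\dim\B_0^\delta=|\{i\ge2:\delta_i\le\delta\}|$, which equals the size of the linearly independent subset $\{b_i:i\ge2,\ \delta_i\le\delta\}$ of $\mathcal B$ contained in $\B_0^\delta$; equality follows. This bookkeeping — in particular handling repeated pairwise distances, where several blocks may merge at one scale — is the step I expect to require the most care.

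Finally, the splitting is automatic: $\mathcal B$ gives $C_0=\mathbb{Z}_2[x_1]\oplus\bigoplus_{i=2}^n\mathbb{Z}_2\,b_i$, and by the displayed formula each $\B_0^\delta$ is the span of a sub-collection of these basis lines, so the decomposition is compatible with the whole family $\{\B_0^\delta\}$ and hence with all transition maps of $H_0(\kvr(X),\mathbb{Z}_2)=\{C_0/\B_0^\delta\}$. Passing to quotients, the summand $\mathbb{Z}_2[x_1]$ contributes $\mathbb{Z}_2$ at every scale (no nonzero multiple of $[x_1]$ ever lies in a $\B_0^\delta$, as those are spanned by augmentation-zero vectors), yielding $\I(0,\infty)$; and for $i\ge2$ the summand $\mathbb{Z}_2\,b_i$ contributes $\mathbb{Z}_2$ precisely while $\delta<\delta_i$ and $0$ once $\delta\ge\delta_i$, yielding $\I(0,\delta_i)=\I(0,\min_{k<i}u_X(x_k,x_i))$. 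Summing over $i$ gives the claimed isomorphism, and the argument works verbatim over any field. (In keeping with the half-open convention produced by $\mathbf{T}$, the bar attached to $x_i$ is $[0,\delta_i)$: born at scale $0$ and dying at the scale where $x_i$ first merges with an earlier point.)
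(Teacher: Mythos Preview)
Your proof is correct and takes a genuinely different route from the paper's. The paper does not work at the chain level at all: it invokes the standard matrix--reduction algorithm from \cite{comp-top} (applied to the $1$-skeleton with filtration value $f(\{x_i,x_j\})=u_X(x_i,x_j)$), and then proves a combinatorial claim --- using the ultrametric inequality for $u_X$ --- that in the reduced matrix the column paired with row $\{x_k\}$ is always the \emph{leftmost} column of the original boundary matrix having its lowest $1$ in that row; reading off the value of that column gives $\min_{i<k}u_X(x_i,x_k)$. Your argument instead exhibits a single basis $\{[x_1],b_2,\dots,b_n\}$ of $C_0$ that is simultaneously adapted to every $\B_0^\delta$, and verifies the key identity $\B_0^\delta=\Span\{b_i:\delta_i\le\delta\}$ by a clean dimension count (each component has exactly one minimal-index representative). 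Your approach is more self-contained --- it needs no external algorithm, no appeal to its correctness, and works verbatim over any field --- while the paper's approach has the complementary virtue of tying the result directly to the output of the standard persistence computation and giving an explicit algorithmic reading of which edge kills which vertex.
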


\begin{proof}
We refer the reader to chapter $7$ of \cite{comp-top} for ideas used in this proof. In chapter $7.1$ of \cite{comp-top}, the authors provide an algorithm for determining the intervals in $\dgm_k^\vr(X)$ for all $k \in \mathbb{Z}_+$. A proof of correctness of this algorithm also appears in \cite[Chapter 7.1]{comp-top}. Here, we briefly describe their algorithm for $k=0$.

We define an arbitrary ordering $x_1 < x_2 <  \ldots < x_n $ of elements of $X$. Let $C_1 \subset 2^X $ denote the collection of subsets of $X$ of cardinality at most $2$. By definition, $C_1$ is a simplicial complex. We fix the following notation here: we have $\{x_i,x_j \} \in C_1 $ only for $i < j$. We define a function $f:C_1 \rightarrow \mathbb{R}_+ $ as $f(\{x_i, x_j \}) = u_X(x_i, x_j) $ and $f(\{x_i \}) = 0$, for all $1 \leq i \leq j \leq n$. We now define an ordering $<_S $ on elements of $C_1$ as follows: we first fix $\{x_1\} <_S \{x_2\} <_S \ldots <_S \{x_n\} $. In order to determine the ordering of subsets of cardinality $2$, we compare their values on the function $f$. We set $\{x_i, x_j \} <_S \{x_k, x_l \} $ if $f(\{x_i, x_j \}) \leq f(\{x_k,x_l \})$. If subsets $\{x_i, x_j \}, \{x_k, x_l \} $ are such that $f(\{x_i, x_j \}) = f(\{x_k, x_l \})$, then $\{x_i,x_j \} <_S \{x_k,x_l \} $ if and only if $i < k $ or $i = k, j < l$. Thus, we use lexicographic ordering on elements of $C_1 $ with same value on the function $f$. We have that every element of $C_1 $ of cardinality $2$ is a $1$-dimensional face of itself, and every element of cardinality $1$ is a $0$-dimensional face of any set of cardinality $2$ containing it. The ordering $<_S $ satisfies that if $f(\{x_i, x_j \}) < f(\{x_k,x_l \}) $, then $\{x_i,x_j \} <_S \{x_k,x_l \} $, and the faces $\{x_i \},\{x_j \} $ of $\{x_i,x_j \}$ satisfy $\{x_i \}, \{x_j \} <_S \{x_i,x_j \} $. Thus, we have a \emph{compatible ordering} of the faces of $C_1$. 

We now write the \emph{boundary matrix} $B$ using this ordering of faces. The boundary matrix is a binary square matrix of size $n + \frac{n(n-1)}{2} $. The size of $B$ is equal to $|C_1|$, and the rows and columns of $B$ correspond to elements of $C_1$ ordered according to relation $<_S$. By abuse of notation, we name the rows and columns of $B$ on their corresponding elements in $C_1$. The columns of $B$ corresponding to the singleton sets of $C_1$ are set to be zero, while for all $1 \leq i \leq j \leq n $, the column $\{x_i, x_j \} $ has $1$ in the rows $\{x_i\} $ and $\{x_j\}$, and zero everywhere else. For every column $\{x_i,x_j \}$ of $B$, we denote by $\low(\{x_i,x_j \}) $, the row of $B$ in which the lowest $1$ of the column $\{x_i,x_j \}$ appears.

We perform some column additions in the matrix $B$ such that in the new matrix, no two columns have their lowest $1$ in the same row. This is done as follows: for simplicity, we number the columns from $1$ to $|C_1|$, with the leftmost column being numbered $1$ and the rightmost column being numbered $|C_1|$. We scan the boundary matrix from left to right and suppose that $t$ is the first column for which there is a column $s$, $s < t$, satisfying $\low(s) = \low(t)$. In this case, we add column $s$ to column $t$. Now, $\low(s) \neq \low(t)$, but there might be some column $r$, $r < t$ such that $\low(r) = \low(t)$. We then add column $r$ to column $t$. We keep performing such column additions till there is no column to the left of column $t$ with $\low$ value equal to $\low(t)$. We then proceed to column $t+1$ and repeat. In the end, we obtain a matrix in which no two columns have their lowest $1$ in the same row. This matrix is called the \emph{reduced matrix} and is denoted by $R$. Now, for every non-zero column $\{x_i,x_j \} $ in $R$ having lowest $1$ in the row $\{x_j \} $, the interval $(f(\{x_i\}), f(\{x_i, x_l \})) $ belongs to $\dgm_0^{\vr}(X)$. In addition, the interval $[0, \infty)$ belongs to $\dgm_0^\vr(X)$. This concludes the algorithm used to determine intervals in $\dgm_0^\vr(X)$. 

We now provide an example in order to illustrate the above algorithm. Let $X = \{x_1,x_2,x_3 \} $, with $d_X(x_1,x_2) =1, d_X(x_2,x_3)=2$ and $d_X(x_3,x_1)=3$. We have 
$$C_1 = \{\{x_1\}, \{x_2 \}, \{x_3 \}, \{x_1,x_2 \}, \{x_2,x_3 \}, \{x_1,x_3 \} \}.$$ 
The function $f:C_1 \rightarrow \mathbb{R}_+$ is defined as follows: $f(\{x_1\}) = f(\{x_2 \}) = f(\{x_3 \}) = 0, f(\{x_1,x_2 \}) = u_X(x_1,x_2)=1, f(\{x_2,x_3 \}) = u_X(x_2,x_3) = 2, f(\{x_1,x_3 \}) = u_X(x_1,x_3)=2$. Thus, we have $\{x_1 \} <_S \{x_2 \} <_S \{x_3 \} <_S \{x_1,x_2 \} <_S \{x_1,x_3 \} <_S \{x_2,x_3 \} $. The boundary matrix is the following:
$$ B = \bordermatrix{& \{x_1 \} & \{x_2 \} & \{x_3 \} & \{x_1,x_2 \} & \{x_1,x_3 \} & \{x_2,x_3 \} \cr 
\{x_1 \} & 0 & 0 & 0 & 1 & 1 & 0 \cr 
\{x_2 \} & 0 & 0 & 0 & 1 & 0 & 1 \cr 
\{x_3 \} & 0 & 0 & 0 & 0 & 1 & 1\cr 
\{x_1,x_2 \} & 0 & 0 & 0 & 0 & 0 & 0 \cr 
\{x_1,x_3\} & 0 & 0 & 0 & 0 & 0 & 0 \cr 
\{x_2,x_3 \} & 0 & 0 & 0 & 0 & 0 & 0 } $$
The reduced matrix obtained after performing the required column operations is the following:
$$ R = \bordermatrix{& \{x_1 \} & \{x_2 \} & \{x_3 \} & \{x_1,x_2 \} & \{x_1,x_3 \} & \{x_2,x_3 \} \cr 
\{x_1 \} & 0 & 0 & 0 & 1 & 1 & 0 \cr 
\{x_2 \} & 0 & 0 & 0 & 1 & 0 & 0 \cr 
\{x_3 \} & 0 & 0 & 0 & 0 & 1 & 0\cr 
\{x_1,x_2 \} & 0 & 0 & 0 & 0 & 0 & 0 \cr 
\{x_1,x_3\} & 0 & 0 & 0 & 0 & 0 & 0 \cr 
\{x_2,x_3 \} & 0 & 0 & 0 & 0 & 0 & 0 } $$
The algorithm now implies that $\dgm_0^\vr(X) = \lc (0,1),(0,2),(0,\infty) \rc $. We now use the following claim to determine $\dgm_0^\vr(X)$ for any $(X,d_X) \in \M$.

\begin{claim}
For every $\{x_k \} \in C_1 $, the unique column in the reduced matrix $R$ with lowest $1$ in the row $\{x_k \}$ is the leftmost column in the boundary matrix $B$ with lowest $1$ in the row $\{x_k \}$.
\end{claim}

\begin{proof}[Proof of Claim]
Consider the row $\{x_k \}$ in the matrix $B$, and the column in which $1$ appears for the first time in this row. If this $1$ is the lowest element of its column, then the column is $\{x_i,x_k \} $ for some $i < k$, and the interval $(f(\{x_k \}, f(\{x_i,x_k \})$ is added to $\dgm_0^{\vr}(X)$. We observe that $(f(\{x_k \}, f(\{x_i,x_k \}) = (0, \min_{i < k}u_X(x_i,x_k))$. Now, suppose that the first $1$ of row $\{x_k \}$ is not the lowest element of its column. Thus, such a column is $\{x_k,x_m \} $ for $k < m$. In the column additions performed to obtain the reduced matrix, this particular $1$ becomes the lowest $1$ of some other column in two ways. First, if some column $\{x_i,x_m \}, i < k $, on the left of column $\{x_k,x_m \} $ is added to the column $\{x_k,x_m \} $, and second if column $\{x_k,x_m \} $ is added to some column $\{x_i,x_m \}, i<k$ on its right. We first consider the case where there is a column $\{x_i,x_m \}, i<m $ to the left of column $\{x_k,x_m \} $. In this case, we have $ u_X(x_i,x_m) \leq u_X(x_k,x_m)$. Now, $u_X(x_i,x_k) \leq \max \{u_X(x_i, x_m), u_X(x_k, x_m) \} = u_X(x_k, x_m) $. Since $i < k$, we have that $\{x_i,x_k \} <_S \{x_k,x_m \} $. This contradicts the assumption that the first $1$ in the row $\{x_k \} $ appears in the column $\{x_k,x_m \} $. Therefore, there is no column $\{x_i, x_m \}$ with $i < m$ to the left of $\{x_k,x_m \} $. 

We now consider the second case i.e.~there is a column $\{x_i,x_m \}, i < k $ to the right of $\{x_k,x_m \}$. Here, we have $u_X(x_k,x_m) \leq u_X(x_i,x_m)$. Since 
$$u_X(x_i,x_k) \leq \max \{u_X(x_k,x_m), u_X(x_i, x_m) \} = u_X(x_i,x_m),$$
we have that $\{x_i,x_k \} <_S \{x_i,x_m \} $. Thus, we have a column $\{x_i,x_k \} $ whose lowest $1$ is in the row $x_k$, and this column appears before the column $\{x_i,x_m \}$. Thus, if column $\{x_k,x_m \}$ is added to column $\{x_i, x_m \}$, the lowest $1$ of $\{x_i,x_m \} $ is in row $\{x_k \}$, but this does not affect the column $\{x_i,x_k\} $. This implies that the column $\{x_k,x_m \} $ has no affect on the leftmost column of the boundary matrix with lowest $1$ in the row $\{x_k \}$. Thus, we obtain that the interval corresponding to the row $\{x_k \} $ comes from the first column whose lowest $1$ is in row $\{x_k \}$. This proves the claim.
\end{proof}

Note that this proof also suggests a method of associating an interval to every element of $X$. In particular, the interval associated with $x_k \in X$ is the interval associated with row $\{ x_k \}$ and the first column of the boundary matrix with lowest $1$ in row $\{x_k \}$. By definition, the value of such a column under function $f$ is $\min_{i<k}u_X(x_i,x_k)$. Thus, we have that the element $x_k \in X, k > 1$ is associated with the interval $(0, \min_{i < k}u_X(x_i,x_k) ) $. We associate the interval $[0, \infty) $ with $x_1$.
\end{proof}

Now, suppose that given finite metric spaces $(X,d_X)$ and $(Y,d_Y)$, we construct the Vietoris-Rips simplicial complexes $\K_\bullet^{\vr}(X)$ and $\K_\bullet^{\vr}(Y)$. Then, for a fixed $k \in \mathbb{Z}_+$, we compute the persistence vector spaces $H_k \circ \K_\bullet^{\vr}(X)$ and $H_k \circ \K_\bullet^{\vr}(Y)$, as well as their respective persistence diagrams. Suppose that we have a method of comparing two metric spaces as well as two persistence diagrams. Then, a natural question to ask is, if $(X,d_X)$ and $(Y,d_Y)$ are ``almost identical", then how do the persistence diagrams associated to $H_k \circ \K_\bullet^{\vr}(X)$ and $H_k \circ \K_\bullet^{\vr}(Y)$ compare. The next section focuses towards formalizing this question and then answering it.

\section{Stability of Invariants} \label{sec:stability}

In this section, we formalize the following question: if $(X,d_X), (Y,d_Y) \in \M $ are almost identical, then how do their respective $k$-persistence diagrams, $\dgm_k^{\vr}(X)$ and $\dgm_k^{\vr}(Y) $ compare. This is done by defining a notion of dissimilarity between metric spaces, as well as between persistence diagrams. Therefore, we now define a notion of distance between metric spaces, and a notion of distance between persistence vector spaces as well as between persistence diagrams. 
\subsection{Gromov-Hausdorff Distance}

In thie section, we define a notion of distance between two finite metric spaces. Let $(X, d_X), (Y, d_Y) \in \mathcal{M} $. We say that $(X,d_X)$ and $(Y,d_Y)$ are identical if they are \emph{isometric}.

\begin{definition}[Isometry]
An isometry between $(X,d_X), (Y,d_Y) \in \M$ is a map $\phi:X \rightarrow Y$ such that $\phi$ is surjective, and for all $x,x' \in X$, $d_X(x,x') = d_Y(\phi(x), \phi(x')) $.
\end{definition}
Note that the condition $d_X(x,x') = d_Y(\phi(x), \phi(x')) $ ensures that $\phi$ is injective. Thus, if $(X,d_X), (Y,d_Y) \in \M$ are isometric, then there exists a bijective and distance preserving map $\phi:X \rightarrow Y$. Since $\phi$ is a bijection between finite metric spaces, it has an inverse, say $\psi:Y \rightarrow X$, with $\phi \circ \psi = \mathrm{id}_Y $ and $\psi \circ \phi = \mathrm{id}_X$. We now define the \emph{distortion} and \emph{co-distortion} of maps $\phi$ and $\psi$.

\begin{definition}[Distortion]
Given $(X,d_X), (Y,d_Y) \in \M $, the distortion of a map $f:X \rightarrow Y $ is defined as
$$ \dis(f) := \max_{x,x' \in X}|d_X(x,x') - d_Y(f(x), f(x'))|. $$
\end{definition}

\begin{definition}[Co-distortion]
Given $(X,d_X), (Y,d_Y) \in \M $, and maps $f:X \rightarrow Y $, $g:Y \rightarrow X$, the co-distortion of $f$ and $g$ is defined as
$$ C(f,g) := \max_{x \in X, y \in Y}|d_X(x,g(y)) - d_Y(y,f(x)) |. $$
\end{definition}

We observe that if $(X,d_X), (Y,d_Y) \in \M$ are isometric, then there exist maps $\phi:X \rightarrow Y$ and $\psi:Y \rightarrow X$ such that $\dis(\phi) = 0,~\dis(\psi) = 0$ and $C(\phi,\psi) = 0 $. We now want to relax the notion of isometry between metric spaces to the notion of $\eta$-isometry, for some $\eta > 0$.

\begin{definition}[$\eta$-isometry]
Given $(X,d_X), (Y,d_Y) \in \M $ and $\eta > 0$, maps $\phi:X \rightarrow Y $ and $\psi:Y \rightarrow X$ constitute an $\eta$-isometry between $(X,d_X)$ and $(Y,d_Y)$ if $\dis(\phi) \leq \eta $, $\dis(\psi) \leq \eta$ and $C(\phi,\psi) \leq \eta$.
\end{definition}

We observe that if $\phi:X \rightarrow Y$ and $\psi:Y \rightarrow X$ constitute an $\eta$-isometry between $(X,d_X)$ and $(Y,d_Y)$, then $\phi \circ \psi $ is an ``approximate identity" on $Y$ and $\psi \circ \phi$ is an ``approximate identity" on $X$. This means the following: for every $x \in X$, we have
$$ d_X(x,\psi \circ \phi(x)) = |d_X(x,\psi \circ \phi(x) ) - d_Y(\phi(x),\phi(x)) | \leq C(\phi,\psi) \leq \eta. $$
Similarly, for every $y \in Y$, we have $d_Y(y, \phi \circ \psi(y)) \leq \eta$. Now, we are ready to define a notion of distance between metric spaces.

\begin{definition}[Gromov-Hausdorff distance]
The Gromov-Hausdorff distance between $(X, d_X),~(Y, d_Y) \in \M$ is defined as
$$ \dgh((X, d_X), (Y, d_Y)) := \frac{1}{2} \cdot \min \{\eta \geq 0~|~\exists~\mbox{an}~\eta\mbox{-isometry~between}~X~\mbox{and}~Y \}. $$ 
\end{definition}

\begin{theorem}[{\cite[Theorem 7.3.30]{burago-book}}]
The function $\dgh : \mathcal{M} \times \mathcal{M} \rightarrow \mathbb{R}_+ $ is non-negative, symmetric and satisfies the triangle inequality; moreover $\dgh((X,d_X),(Y,d_Y)) = 0$ if and only if $X$ and $Y$ are isometric.
\end{theorem}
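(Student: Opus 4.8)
The plan is to verify the four assertions one at a time, directly from the definition of $\dgh$ via $\eta$-isometries, relying on two structural observations. First, for finite metric spaces the set $E_{X,Y}:=\{\eta\geq 0\mid \text{there is an }\eta\text{-isometry between }X\text{ and }Y\}$ is a nonempty, upward-closed subset of $\mathbb{R}_+$ whose infimum is attained: nonemptiness follows by taking $\phi,\psi$ to be constant maps (which make $\dis(\phi),\dis(\psi),C(\phi,\psi)$ all finite), upward-closedness is immediate since an $\eta$-isometry is an $\eta'$-isometry whenever $\eta'\geq\eta$, and attainment holds because there are only finitely many set maps $X\to Y$ and $Y\to X$, so $\min_{(\phi,\psi)}\max\{\dis(\phi),\dis(\psi),C(\phi,\psi)\}$ is a minimum over a finite set. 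In particular $\dgh(X,Y)$ is a genuine minimum, and $\dgh(X,Y)=0$ yields an actual $0$-isometry. Second, the composite of an $\eta_1$-isometry with an $\eta_2$-isometry is an $(\eta_1+\eta_2)$-isometry; this is the engine behind the triangle inequality.

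Non-negativity is then immediate. For symmetry, I would observe that $C(\psi,\phi)=C(\phi,\psi)$ straight from the definition of co-distortion (interchanging the roles of $X,Y$ and of $\phi,\psi$ leaves the expression unchanged), so $(\phi,\psi)$ is an $\eta$-isometry between $X$ and $Y$ exactly when $(\psi,\phi)$ is one between $Y$ and $X$; hence $E_{X,Y}=E_{Y,X}$ and $\dgh(X,Y)=\dgh(Y,X)$.

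For the triangle inequality, fix $(X,d_X),(Y,d_Y),(Z,d_Z)\in\M$, let $(\phi_1,\psi_1)$ be an $\eta_1$-isometry between $X$ and $Y$ and $(\phi_2,\psi_2)$ an $\eta_2$-isometry between $Y$ and $Z$, and consider $(\phi_2\circ\phi_1,\ \psi_1\circ\psi_2)$. For distortion I would insert the intermediate term $d_Y(\phi_1(x),\phi_1(x'))$ and use the triangle inequality for $|\cdot|$ to get $\dis(\phi_2\circ\phi_1)\leq\dis(\phi_1)+\dis(\phi_2)\leq\eta_1+\eta_2$, and symmetrically for $\psi_1\circ\psi_2$. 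For co-distortion, given $x\in X$ and $z\in Z$ I would insert $d_Y(\phi_1(x),\psi_2(z))$ and bound the two resulting differences using, respectively, the co-distortion of $(\phi_1,\psi_1)$ evaluated at the point $\psi_2(z)\in Y$ and the co-distortion of $(\phi_2,\psi_2)$ evaluated at $\phi_1(x)\in Y$, obtaining $C(\phi_2\phi_1,\psi_1\psi_2)\leq\eta_1+\eta_2$. Thus $(\phi_2\phi_1,\psi_1\psi_2)$ is an $(\eta_1+\eta_2)$-isometry, and minimizing over $\eta_1,\eta_2$ gives $\dgh(X,Z)\leq\dgh(X,Y)+\dgh(Y,Z)$. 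I expect this co-distortion step — choosing the correct auxiliary point at which to apply each co-distortion bound — to be the only place that needs genuine care; the rest is formal.

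Finally, for the vanishing characterization: if $X$ and $Y$ are isometric via a bijection $\phi$ with inverse $\psi$, then $\dis(\phi)=\dis(\psi)=0$, and since $d_Y(y,\phi(x))=d_Y(\phi(\psi(y)),\phi(x))=d_X(\psi(y),x)$ we also get $C(\phi,\psi)=0$, so $(\phi,\psi)$ is a $0$-isometry and $\dgh(X,Y)=0$. Conversely, if $\dgh(X,Y)=0$ then by the first observation there is a $0$-isometry $(\phi,\psi)$; here $\dis(\phi)=0$ forces $\phi$ to be distance-preserving, hence injective, so $|X|\leq|Y|$, and likewise $\dis(\psi)=0$ gives $|Y|\leq|X|$, so $|X|=|Y|$ and $\phi$ is an injection between finite sets of equal size, hence a bijection; a distance-preserving surjection is an isometry by definition. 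This finishes all four parts.
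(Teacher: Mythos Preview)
Your proof is correct. The paper does not supply its own proof of this theorem; it is simply quoted from Burago--Burago--Ivanov with a citation and no argument, so there is nothing in the paper to compare against.

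A brief remark on your approach: the key observation that makes the finite case clean is your first paragraph, namely that for $X,Y\in\M$ there are only finitely many pairs of set maps $(\phi,\psi)$, so the infimum defining $\dgh$ is actually a minimum and $\dgh(X,Y)=0$ genuinely produces a $0$-isometry. In the general (compact) setting one needs a limiting argument here; your exploitation of finiteness is exactly what the restriction to $\M$ buys you. The co-distortion bound in the triangle inequality is handled correctly: inserting $d_Y(\phi_1(x),\psi_2(z))$ and then applying $C(\phi_1,\psi_1)$ at the point $\psi_2(z)\in Y$ and $C(\phi_2,\psi_2)$ at $\phi_1(x)\in Y$ is indeed the right move, and your cardinality argument for the converse of the vanishing characterization is the cleanest way to finish.
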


Thus, we now have a notion of distance between finite metric spaces. The next step is to define a notion of distance between persistence vector spaces.

\subsection{Interleaving Distance}

We recall that $P\V(\F)$ denotes the category of all persistence vector spaces over field $\F$. For $\Vb,\Wb \in P\V(\F)$, $\Vb = \{V_{\delta} \xrightarrow{v_{\delta,\delta'}} V_{\delta'} \}_{\delta \leq \delta'} $ and $\Wb = \{W_{\delta} \xrightarrow{w_{\delta,\delta'}} W_{\delta'} \}_{\delta \leq \delta'} $, we recall that an isomorphism $\alpha:\Vb \rightarrow \Wb $ consists of maps $\alpha_\delta:V_\delta \rightarrow W_\delta$ for all $\delta$, such that the following diagram commutes for all $\delta \leq \delta'$,
\[
\begin{tikzcd}
V_\delta \arrow[r, "v_{\delta,\delta'}"] \arrow[d, "\alpha_\delta"] 
& V_{\delta'} \arrow[d, "\alpha_{\delta'}"] \\
W_\delta \arrow[r, "w_{\delta, \delta'}"] 
& W_{\delta'}
\end{tikzcd}
\]
and $\alpha_\delta: V_\delta \rightarrow W_\delta$ is an isomorphism of vector spaces for all $\delta \in \mathbb{R}_+$. We now relax this notion of isomorphism between persistence vector spaces.
\begin{definition}[$\eta$-interleaving]
Let $\eta \geq 0 $ be fixed. Given a field $\F$, an $\eta$-interleaving between $\Vb,\Wb \in P\V(\F)$, $\Vb = \{V_{\delta} \xrightarrow{v_{\delta,\delta'}} V_{\delta'} \}_{\delta \leq \delta'} $ and $\Wb = \{W_{\delta} \xrightarrow{w_{\delta,\delta'}} W_{\delta'} \}_{\delta \leq \delta'} $ consists of maps
$$ \alpha_\delta:V_\delta \rightarrow W_{\delta + \eta},~~ \beta_\delta:W_\delta \rightarrow V_{\delta + \eta}~\forall~\delta  $$
such that the following diagrams commute for all $\delta$:
\[
\begin{tikzcd}
V_\delta \arrow[rr,"v_{\delta, \delta+2 \eta}"] \arrow[dr, "\alpha_\delta"] & & V_{\delta + 2 \eta} \\
& W_{\delta+\eta} \arrow[ur,"\beta_{\delta + \eta}"] &
\end{tikzcd}
\begin{tikzcd}
& V_{\delta + \eta} \arrow[dr, "\alpha_{\delta+\eta}"] & \\
W_\delta \arrow[rr, "w_{\delta, \delta + 2\eta}"] \arrow[ur,"\beta_\delta"] & & W_{\delta+2 \eta} 
\end{tikzcd}
\]
The above conditions are referred to as the triangle conditions. We also want the following diagrams to commute for all $\delta' \geq \delta$. These conditions are referred to as the parallelogram conditions. 
\[
\begin{tikzcd}
V_\delta \arrow[rr, "v_{\delta, \delta'}"] \arrow[dr, "\alpha_\delta"] &  & V_{\delta'} \arrow[dr, "\alpha_{\delta'}"] & \\
& W_{\delta + \eta} \arrow[rr, "w_{\delta+\eta, \delta' + \eta}"] &  & W_{\delta'+ \eta } 
\end{tikzcd}
\begin{tikzcd}
& V_{\delta+\eta} \arrow[rr, "v_{\delta+\eta, \delta' + \eta} "] & & V_{\delta' + \eta} \\
W_{\delta} \arrow[rr, "w_{\delta, \delta'} "] \arrow[ur, "\beta_\delta "] & & W_{\delta'} \arrow[ur, "\beta_{\delta'} "] & 
\end{tikzcd}
\]
We set $\alpha = \{\alpha_\delta \}_\delta$ and $\beta = \{\beta_\delta \}_\delta$, and say that $(\alpha, \beta)$ is an $\eta$-interleaving between $\Vb$ and $\Wb$. 
\end{definition}
We observe that an $\eta$-interleaving is indeed a generalization of isomorphism between persistence vector spaces. In fact, if $(\alpha,\beta)$ is a $0$-interleaving between $\Vb$ and $\Wb$, then it is straightforward to see that for every $\delta$, $\beta_\delta \circ \alpha_\delta = \mathrm{id}_{V_\delta} $ and $\alpha_\delta \circ \beta_\delta = \mathrm{id}_{W_\delta}$. Thus, $\alpha_\delta $ and $\beta_\delta$ become isomorphisms of vector spaces for all $\delta$. We are now ready to define the interleaving distance between persistence vector spaces.

\begin{definition}[Interleaving distance]
Given a field $\F$, the interleaving distance between $\Vb, \Wb \in P\V(\F)$ is defined as
$$ \di(\Vb,\Wb) := \inf \{\eta \geq 0~|~\Vb~\mbox{and}~\Wb~\mbox{are}~\eta\mbox{-interleaved}\}. $$
\end{definition}

\begin{proposition}[\cite{chazal16}]
The function $\di:P\V(\F) \times P\V(\F) \rightarrow \mathbb{R}_+ \cup \infty$ is non-negative, symmetric and satisfies the triangle inequality. However, $\di(\Vb,\Wb)$ may take value $\infty$ and $\di(\Vb,\Wb) $ might be zero even if $\Vb$ and $\Wb$ are not isomorphic. 
\end{proposition}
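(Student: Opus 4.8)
The plan is to dispatch the four assertions in turn: non-negativity and symmetry are formal, the triangle inequality is proved by composing interleavings, and the two pathologies ($\di=\infty$, and $\di=0$ without isomorphism) are witnessed by explicit examples. Non-negativity is immediate, since $\di(\Vb,\Wb)$ is by definition the infimum of a subset of $[0,\infty)$ (with the convention $\inf\emptyset=+\infty$). For symmetry, the observation is that if $(\alpha,\beta)$ is an $\eta$-interleaving between $\Vb$ and $\Wb$, then $(\beta,\alpha)$ is an $\eta$-interleaving between $\Wb$ and $\Vb$: the two triangle conditions are interchanged, as are the two parallelogram conditions, so every required triangle and square still commutes. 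Hence the sets $\{\eta\geq 0 : \Vb,\Wb \text{ are } \eta\text{-interleaved}\}$ and $\{\eta\geq 0 : \Wb,\Vb \text{ are } \eta\text{-interleaved}\}$ coincide, and so do their infima.

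\textbf{Triangle inequality.} Let $\Vb,\Wb,\mathbb{U}\in P\V(\F)$. If $\di(\Vb,\Wb)=\infty$ or $\di(\Wb,\mathbb{U})=\infty$ the inequality is trivial, so assume both are finite, fix $\eps>0$, and choose $\eta_1<\di(\Vb,\Wb)+\eps$ and $\eta_2<\di(\Wb,\mathbb{U})+\eps$ together with an $\eta_1$-interleaving $(\alpha,\beta)$ between $\Vb$ and $\Wb$ and an $\eta_2$-interleaving $(\gamma,\zeta)$ between $\Wb$ and $\mathbb{U}$. I would then set
$$\alpha'_\delta:=\gamma_{\delta+\eta_1}\circ\alpha_\delta\colon V_\delta\to U_{\delta+\eta_1+\eta_2}, \qquad \beta'_\delta:=\beta_{\delta+\eta_2}\circ\zeta_\delta\colon U_\delta\to V_{\delta+\eta_1+\eta_2},$$
and verify that $(\alpha',\beta')$ is an $(\eta_1+\eta_2)$-interleaving: after substituting the definitions, each triangle condition for $(\alpha',\beta')$ unwinds into one triangle condition of $(\gamma,\zeta)$, then one suitably shifted parallelogram condition of $(\alpha,\beta)$, then one triangle condition of $(\alpha,\beta)$ (and symmetrically with the two interleavings exchanged), while each parallelogram condition for $(\alpha',\beta')$ is the paste of a parallelogram condition of $(\alpha,\beta)$ with one of $(\gamma,\zeta)$. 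This gives $\di(\Vb,\mathbb{U})\leq\eta_1+\eta_2<\di(\Vb,\Wb)+\di(\Wb,\mathbb{U})+2\eps$, and letting $\eps\downarrow 0$ proves the inequality. This composition bookkeeping — in particular keeping the index shifts straight — is the only genuine computation in the proof and the step I expect to demand the most care.

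\textbf{$\di$ can be infinite.} Take $\Vb$ to be the zero persistence vector space ($V_\delta=0$ for all $\delta$) and $\Wb$ the constant space $W_\delta=\F$ with every transition map equal to $\mathrm{id}_\F$. In any candidate $\eta$-interleaving the maps $\alpha_\delta\colon 0\to\F$ and $\beta_\delta\colon\F\to 0$ must vanish, so the triangle condition for $\Wb$ would force $\mathrm{id}_\F=w_{\delta,\delta+2\eta}=\alpha_{\delta+\eta}\circ\beta_\delta=0$, which is impossible. Hence no $\eta$-interleaving exists for any $\eta\geq 0$, and $\di(\Vb,\Wb)=\inf\emptyset=+\infty$.

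\textbf{$\di$ can vanish between non-isomorphic spaces.} Fix a value $\delta_0\in\mathbb{R}_+$ and let $\Vb$ be supported at the single index $\delta_0$: set $V_{\delta_0}=\F$, $V_\delta=0$ otherwise, and all transition maps equal to zero except $v_{\delta_0,\delta_0}=\mathrm{id}_\F$. Since $\Vb$ is concentrated at one index, the axioms of Definition \ref{def:pvs} are readily checked. Let $\Wb$ be the zero persistence vector space. For every $\eta>0$, the pair with all $\alpha_\delta$ and $\beta_\delta$ equal to zero is an $\eta$-interleaving: the triangle condition for $\Vb$ reads $v_{\delta,\delta+2\eta}=0$, which holds because $\delta$ and $\delta+2\eta$ cannot both equal $\delta_0$ when $\eta>0$, while the remaining triangles and parallelograms involve only maps into or out of the zero space. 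Therefore $\di(\Vb,\Wb)\leq\eta$ for every $\eta>0$, so $\di(\Vb,\Wb)=0$; yet $\Vb\not\cong\Wb$, since an isomorphism would require $\alpha_{\delta_0}\colon V_{\delta_0}=\F\to W_{\delta_0}=0$ to be an isomorphism of vector spaces, which is impossible.
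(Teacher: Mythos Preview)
The paper does not prove this proposition at all: it is stated with a citation to \cite{chazal16} and immediately followed by the stability theorem, with no argument given. So there is no ``paper's own proof'' to compare against.

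Your argument is correct and supplies exactly what the paper omits. Non-negativity and symmetry are indeed formal; your composition of an $\eta_1$-interleaving with an $\eta_2$-interleaving to obtain an $(\eta_1+\eta_2)$-interleaving is the standard proof of the triangle inequality, and the diagram chase you sketch (triangle of $(\gamma,\zeta)$, then parallelogram of $(\alpha,\beta)$, then triangle of $(\alpha,\beta)$) goes through verbatim. Both examples are valid: the pair (zero module, constant module $\F$) admits no $\eta$-interleaving for any $\eta$, giving $\di=\infty$; and the module supported at a single parameter $\delta_0$ is $\eta$-interleaved with the zero module for every $\eta>0$ (since $v_{\delta,\delta+2\eta}$ factors through a zero space whenever $\eta>0$) yet is not isomorphic to it. One small remark: your ``$\di=0$'' example is the continuous analogue of an interval module $\I(b,b)$ with zero length, which is perhaps the most natural witness and the one implicitly behind the paper's later discussion of ephemeral features.
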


We are now ready to prove the following stability theorem. For $(X,d_X) \in \M$, we recall that $\K_\bullet^{\vr}(X) $ is the Vietoris-Rips filtered simplicial complex associated with $(X,d_X)$.

\begin{theorem}[Stability of Vietoris-Rips persistent homology] \label{thm:stability}
For all $(X,d_X), (Y,d_Y) \in \M$ and $k \in \mathbb{N}$,
$$ 2 \cdot \dgh((X, d_X), (Y, d_Y)) \geq \di(H_k \circ \K_\bullet^\vr(X), H_k \circ \K^\vr_\bullet(Y)). $$
\end{theorem}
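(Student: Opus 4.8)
The plan is to show that any $\eta$-isometry between $(X,d_X)$ and $(Y,d_Y)$ induces an $\eta$-interleaving between the persistence vector spaces $H_k\circ\K_\bullet^\vr(X)$ and $H_k\circ\K_\bullet^\vr(Y)$. Since $\dgh(X,Y)$ is half the infimal $\eta$ admitting an $\eta$-isometry and $\di$ is the infimal $\eta$ admitting an $\eta$-interleaving, the desired inequality then follows by passing to infima. So fix $\eta>2\,\dgh(X,Y)$ and let $\phi\colon X\to Y$, $\psi\colon Y\to X$ be an $\eta$-isometry, i.e.\ $\dis(\phi)\le\eta$, $\dis(\psi)\le\eta$, and $C(\phi,\psi)\le\eta$.

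\textbf{Step 1: constructing the interleaving maps.} The first observation is that $\phi$, read as a map of vertex sets, is a simplicial map $\K_\delta^\vr(X)\to\K_{\delta+\eta}^\vr(Y)$ for every $\delta\ge 0$: if $\diam(\sigma)\le\delta$ then $\diam(\phi(\sigma))\le\diam(\sigma)+\dis(\phi)\le\delta+\eta$, so $\phi(\sigma)$ is a simplex of $\K_{\delta+\eta}^\vr(Y)$. Applying the homology functor $H_k(\ast,\F)$ yields linear maps $\alpha_\delta\colon H_k(\K_\delta^\vr(X),\F)\to H_k(\K_{\delta+\eta}^\vr(Y),\F)$, and symmetrically $\psi$ yields $\beta_\delta\colon H_k(\K_\delta^\vr(Y),\F)\to H_k(\K_{\delta+\eta}^\vr(X),\F)$.

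\textbf{Step 2: parallelogram conditions.} For $\delta\le\delta'$, the two composite vertex maps $\K_\delta^\vr(X)\hookrightarrow\K_{\delta'}^\vr(X)\xrightarrow{\phi}\K_{\delta'+\eta}^\vr(Y)$ and $\K_\delta^\vr(X)\xrightarrow{\phi}\K_{\delta+\eta}^\vr(Y)\hookrightarrow\K_{\delta'+\eta}^\vr(Y)$ are literally equal (both are $\phi$, the inclusions being the identity on vertices). Functoriality of $H_k$ then gives commutativity of the corresponding square of homology maps, which is one parallelogram condition; the other is identical with $\psi$ in place of $\phi$.

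\textbf{Step 3: triangle conditions (the crux).} Here I would prove that the simplicial map $\psi\circ\phi\colon\K_\delta^\vr(X)\to\K_{\delta+2\eta}^\vr(X)$ and the inclusion $\iota\colon\K_\delta^\vr(X)\hookrightarrow\K_{\delta+2\eta}^\vr(X)$ are \emph{contiguous}. Given a simplex $\sigma$ with $\diam(\sigma)\le\delta$, one checks $\diam\big(\sigma\cup\psi\phi(\sigma)\big)\le\delta+2\eta$ case by case: pairs inside $\sigma$ are at distance $\le\delta$; for $x,x'\in\sigma$ one has $d_X(\psi\phi(x),\psi\phi(x'))\le d_Y(\phi(x),\phi(x'))+\dis(\psi)\le d_X(x,x')+2\eta\le\delta+2\eta$; and for a mixed pair the co-distortion bound with $y=\phi(x')$ gives $d_X(x,\psi\phi(x'))\le d_Y(\phi(x'),\phi(x))+C(\phi,\psi)\le d_X(x,x')+\dis(\phi)+\eta\le\delta+2\eta$. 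Hence $\sigma\cup\psi\phi(\sigma)$ is a simplex of $\K_{\delta+2\eta}^\vr(X)$, so $\psi\phi$ and $\iota$ are contiguous, and Lemma~\ref{lem:contig} gives $H_k(\psi\phi)=H_k(\iota)$; that is, $\beta_{\delta+\eta}\circ\alpha_\delta$ equals the structure map of the persistence vector space $H_k\circ\K_\bullet^\vr(X)$ from index $\delta$ to $\delta+2\eta$. The reverse triangle condition $\alpha_{\delta+\eta}\circ\beta_\delta$ $=$ structure map of $H_k\circ\K_\bullet^\vr(Y)$ follows the same way, using $\dis(\phi)\le\eta$ and the (symmetric) co-distortion bound. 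Thus $(\{\alpha_\delta\},\{\beta_\delta\})$ is an $\eta$-interleaving, so $\di(H_k\circ\K_\bullet^\vr(X),H_k\circ\K_\bullet^\vr(Y))\le\eta$; letting $\eta$ decrease to $2\,\dgh(X,Y)$ completes the proof.

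The main obstacle is Step 3: translating the purely metric data of the $\eta$-isometry into an identity of homology maps. The bridge is the contiguity of $\psi\circ\phi$ with the inclusion, and establishing it requires the diameter estimate for $\sigma\cup\psi\phi(\sigma)$ that genuinely uses all three inequalities defining an $\eta$-isometry (distortion of $\phi$, distortion of $\psi$, and co-distortion of the pair). Steps 1 and 2 are routine functoriality bookkeeping once the right simplicial maps are identified.
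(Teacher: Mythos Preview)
Your proof is correct and follows essentially the same approach as the paper: build simplicial maps from the $\eta$-isometry, then use contiguity (Lemma~\ref{lem:contig}) to verify the triangle conditions via the diameter estimate on $\sigma\cup\psi\phi(\sigma)$. Your Step~2 is in fact slightly cleaner than the paper's treatment---you observe that the parallelogram squares commute on the nose at the simplicial level (both composites equal $\phi$ on vertices), whereas the paper lists these among the pairs to be checked for contiguity; but this is a cosmetic difference, not a genuinely different route.
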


\begin{proof}
Let $\eta \geq 0$ be fixed. We show that if $X$ and $Y$ are $\eta$-isometric, then $H_k \circ \K^\vr_\bullet(X)$ and $H_k \circ \K^\vr_\bullet(Y)$ are $\eta$-interleaved. Suppose that $(X,d_X)$ and $(Y,d_Y)$ are $\eta$-isometric. Then, there exist maps $\phi:X \rightarrow Y$ and $\psi:Y \rightarrow X$ such that $\dis(\phi) \leq \eta$, $\dis(\psi) \leq \eta$ and $C(\phi,\psi) \leq \eta$. For some $\delta \geq 0$, let $\sigma \in \K_\delta^{\vr}(X)$. This implies that for all $x,x' \in \sigma$, $d_X(x,x') \leq \delta$. Since $\dis(\phi) \leq \eta$, we obtain that $d_Y(\phi(x), \phi(x')) \leq \delta + \eta$ for all $x,x' \in \sigma$. Thus, $\phi(\sigma) \in \K_{\delta + \eta}^{\vr}(Y) $. Thus, for every $\delta > 0$, $\phi$ induces a map $$\phi_\delta : \K_\delta^{\vr}(X) \rightarrow \K_{\delta + \eta}^{\vr}(Y).$$ Similarly, for every $\delta > 0$, we obtain maps 
$$\psi_\delta: \K_{\delta}^{\vr}(Y) \rightarrow \K_{\delta + \eta}^{\vr}(X).$$ Thus, we obtain the following diagrams:
\[
\begin{tikzcd}
\K_{\delta }^{\vr}(X) \arrow[rr, hook, "i^X_{\delta, \delta+2 \eta} "] \arrow[dr, "\phi_\delta " ] & & \K_{\delta + 2\eta}^{\vr}(X) \\
& \K_{\delta + \eta}^{\vr}(Y) \arrow[ur,"\psi_{\delta + \eta}"] &
\end{tikzcd}
\begin{tikzcd}
& \K_{\delta + \eta}^{\vr}(X) \arrow[dr,"\phi_{\delta + \eta}"] & \\
\K_{\delta }^{\vr}(Y) \arrow[rr, hook, "i^Y_{\delta, \delta+2 \eta} "] \arrow[ur, "\psi_\delta " ] & & \K_{\delta + 2\eta}^{\vr}(Y)
\end{tikzcd}
\]

\[
\begin{tikzcd}[column sep  = 2]
\K_{\delta }^{\vr}(X) \arrow[rr, hook, "i^X_{\delta, \delta'} "] \arrow[dr, "\phi_\delta " ] & & \K_{\delta'}^{\vr}(X) \arrow[dr, "\phi_{\delta'} "] & \\
& \K_{\delta + \eta }^{\vr}(Y) \arrow[rr, hook, "i^Y_{\delta+\eta, \delta'+\eta} "] & & \K_{\delta' + \eta}^{\vr}(Y)
\end{tikzcd}
\begin{tikzcd}[column sep = 2]
& \K_{\delta + \eta }^{\vr}(X) \arrow[rr, hook, "i^X_{\delta+\eta, \delta'+\eta} "] & & \K_{\delta' + \eta}^{\vr}(X) \\
\K_{\delta }^{\vr}(Y) \arrow[rr, hook, "i^Y_{\delta,\delta'} "] \arrow[ur, "\psi_\delta " ] & & \K_{\delta'}^{\vr}(Y) \arrow[ur, "\psi_{\delta'} "] &
\end{tikzcd}
\]
If the above four diagrams were to commute, then, since $H_k(\ast,\F):\Sc \rightarrow \V(\F)$ is a functor for every $k \in \mathbb{N}$, the following diagrams will also commute: fix $k \in \mathbb{N}$, and let $V^X_\delta := H_k(\K_\delta^{\vr}(X),\F)$, $V^Y_{\delta} := H_k(\K_{\delta}^{\vr}(Y),\F) $, $\phi_\delta^\ast := H_k(\phi_\delta)$ and $\psi_\delta^\ast := H_k(\psi_\delta) $.
\[
\begin{tikzcd}
V_{\delta }^X \arrow[rr] \arrow[dr, "\phi^\ast_\delta " ] & & V_{\delta + 2\eta}^X \\
& V_{\delta + \eta}^Y \arrow[ur,"\psi^\ast_{\delta + \eta}"] &
\end{tikzcd}
\begin{tikzcd}
& V_{\delta + \eta}^X \arrow[dr,"\phi^\ast_{\delta + \eta}"] & \\
V_{\delta }^Y \arrow[rr] \arrow[ur, "\psi^\ast_\delta " ] & & V_{\delta + 2\eta}^Y
\end{tikzcd}
\]

\[
\begin{tikzcd}
V_{\delta }^X \arrow[rr] \arrow[dr, "\phi^\ast_\delta " ] & & V_{\delta'}^X \arrow[dr, "\phi^\ast_{\delta'} "] & \\
& V_{\delta + \eta }^Y \arrow[rr] & & V_{\delta' + \eta}^Y
\end{tikzcd}
\begin{tikzcd}
& V_{\delta + \eta }^X \arrow[rr] & & V_{\delta' + \eta}^X \\
V_{\delta }^Y \arrow[rr] \arrow[ur, "\psi^\ast_\delta " ] & & V_{\delta'}^Y \arrow[ur, "\psi^\ast_{\delta'} "] &
\end{tikzcd}
\]
We recall that given simplicial complexes $\K$ and $\Lm$, simplicial maps $f,g:\K \rightarrow \Lm$ are called contiguous if for every simplex $\sigma \in \K$, $f(\sigma) \cup g(\sigma)$ is a simplex in $\Lm$. We have from Lemma \ref{lem:contig} that for such maps, $H_k(f) = H_k(g):H_k(\K,\F) \rightarrow H_k(\Lm,\F)$ for all $k \in \mathbb{N}$. Therefore, it suffices to show that the following maps in the four diagrams on $\kvr(X)$ and $\kvr(Y)$ are contiguous \cite{munkresbook}:
\begin{enumerate}
\item $i^X_{\delta, \delta+2 \eta},~\psi_{\delta+\eta} \circ \phi_\delta : \K_\delta^{\vr}(X) \rightarrow \K_{\delta+2\eta}^{\vr}(X). $
\item $i^Y_{\delta, \delta+2 \eta},~\phi_{\delta+\eta} \circ \psi_\delta: \K_{\delta}^{\vr}(Y) \rightarrow \K_{\delta+2 \eta}^{\vr}(Y). $
\item $\phi_{\delta'} \circ i^X_{\delta,\delta'},~i^Y_{\delta+\eta, \delta'+\eta} \circ \phi_\delta:\K_\delta^{\vr}(X) \rightarrow \K_{\delta'+\eta}^{\vr}(Y). $
\item $\psi_{\delta'} \circ i^Y_{\delta, \delta'},~i^X_{\delta+\eta, \delta'+\eta} \circ \psi_\delta:\K_\delta^{\vr}(Y) \rightarrow \K_{\delta'+\eta}^{\vr}(X). $
\end{enumerate}
We first consider the pair of maps $i^X_{\delta, \delta+2 \eta},~\psi_{\delta+\eta} \circ \phi_\delta : \K_\delta^{\vr}(X) \rightarrow \K_{\delta+2\eta}^{\vr}(X)$. Let $\sigma \in \K_\delta^{\vr}(X)$ be a simplex. In order to show that $i^X_{\delta, \delta+2 \eta}(\sigma) \cup \psi_{\delta+\eta} \circ \phi_\delta(\sigma)$ is a simplex in $\K_{\delta+2\eta}^{\vr}(X)$, we need to show that $\diam(i^X_{\delta, \delta+2 \eta}(\sigma) \cup \psi_{\delta+\eta} \circ \phi_\delta(\sigma) ) \leq \delta + 2 \eta $. Let $x,x' \in i^X_{\delta, \delta+2 \eta}(\sigma) \cup \psi_{\delta+\eta} \circ \phi_\delta(\sigma) $. If both $x,x' \in  i^X_{\delta, \delta+2 \eta}(\sigma) $, then $d_X(x,x') \leq \delta \leq \delta+2 \eta $. If both $x,x' \in \psi_{\delta+\eta} \circ \phi_\delta(\sigma) $, then there exist $a,a' \in \sigma$ such that $x = \psi_{\delta+\eta} \circ \phi_\delta(a) $ and $x' = \psi_{\delta+\eta} \circ \phi_\delta(a')$. Then, 
$$d_X(x,x') = d_X(\psi_{\delta+\eta} \circ \phi_\delta(a), \psi_{\delta+\eta} \circ \phi_\delta(a')) \leq d_Y(\phi_\delta(a), \phi_\delta(a')) + \eta \leq d_X(a,a') + 2 \eta \leq \delta + 2\eta. $$
Here, the first and second inequalities hold because $\dis(\phi),~\dis(\psi) \leq \eta$, while the third inequality hold because $a,a' \in \sigma \in \K_\delta^{\vr}(X)$. Now, suppose that $x \in i^X_{\delta, \delta+2 \eta}(\sigma)$ and $x' \in \psi_{\delta+\eta} \circ \phi_\delta(\sigma) $. This implies that $x \in \sigma$, and there exists $a' \in \sigma$ such that $x' = \psi_{\delta+\eta} \circ \phi_\delta(a')$. We have 
$$d_X(x,x') = d_X(x, \psi_{\delta+\eta} \circ \phi_\delta(a')) \leq d_Y(\phi_{\delta}(x), \phi_\delta(a')) + \eta \leq d_X(x,a') + 2\eta \leq \delta + 2\eta. $$
Here, the first inequality holds because $C(\phi, \psi) \leq \eta $, the second inequality holds because $\dis(\phi) \leq \eta$ and the third inequality holds because $x,a' \in \sigma$. Thus, we have shown that $\diam(i^X_{\delta, \delta+2 \eta}(\sigma) \cup \psi_{\delta+\eta} \circ \phi_\delta(\sigma) ) \leq \delta + 2 \eta.$ This implies that the maps $i^X_{\delta, \delta+2 \eta},~\psi_{\delta+\eta} \circ \phi_\delta : \K_\delta^{\vr}(X) \rightarrow \K_{\delta+2\eta}^{\vr}(X)$ are contiguous. Thus, we have that for every $k \in \mathbb{N}$, 
$$H_k(i^X_{\delta, \delta+2 \eta}) = H_k(\psi_{\delta+\eta} \circ \phi_\delta) = H_k(\psi_{\delta+\eta}) \circ H_k(\phi_\delta).$$
The last equality holds because $H_k(\ast,\F):\Sc \rightarrow \V(\F)$ is a functor for every $k \in \mathbb{N}$. 

We can similarly show that the remaining three pairs of maps are also contiguous. Thus, we obtain that the four diagrams on $V_\delta^X$ and $V_\delta^Y$ also commute. Thus, we have shown that the persistence vector spaces $H_k \circ \K^\vr_\bullet(X) $ and $H_k \circ \K^\vr_\bullet(Y)$ are $\eta$-interleaved. 

Now, let $\dgh((X,d_X),(Y,d_Y)) \leq \eta$. This implies that there exists a $2 \eta$-isometry between $(X,d_X)$ and $(Y,d_Y)$. By the above arguments, we obtain that $H_k \circ \K^\vr_\bullet(X) $ and $H_k \circ \K^\vr_\bullet(Y)$ are $2 \eta$-interleaved for all $k \in \mathbb{N}$. Thus, $\di(H_k \circ \K^\vr_\bullet(X), H_k \circ \K^\vr_\bullet(Y)) \leq 2 \eta $. This implies that $2 \cdot \dgh((X,d_X),(Y,d_Y)) \geq \di(H_k \circ \K^\vr_\bullet(X), H_k \circ \K^\vr_\bullet(Y))$, and proves the theorem.
\end{proof}

We recall that for $(X,d_X) \in \M$ and $\delta \geq 0$, the \v{C}ech complex is defined as 
$$\check{\mathrm{C}}_\delta(X) = \{ \sigma \subseteq X~| \min_{x \in X} \max_{p \in \sigma} d_X(x,p) \leq \delta \}.$$ 
Let $\check{\mathrm{C}}_\bullet(X) = \{\check{\mathrm{C}}_\delta(X) \subseteq \check{\mathrm{C}}_{\delta'}(X) \}_{\delta \leq \delta'} $. Then, we have the following theorem.

\begin{theorem}[Stability of \v{C}ech complex]
For all $(X,d_X), (Y,d_Y) \in \M$, $k \in \mathbb{N}$ and $\eta \geq 0$, if $X$ and $Y$ are $\eta$-isometric, then the persistence vector spaces $H_k \circ \check{\mathrm{C}}_\bullet(X)$ and $H_k \circ \check{\mathrm{C}}_\bullet(Y)$ are $\eta$-interleaved. 
\end{theorem}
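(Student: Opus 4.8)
The plan is to run the argument of Theorem~\ref{thm:stability} with the \v{C}ech complex in place of the Vietoris--Rips complex, the only nonroutine point being to check that $\phi$ and $\psi$ still induce simplicial maps between \v{C}ech complexes at shifted scales. First I would unpack the hypothesis: an $\eta$-isometry provides maps $\phi:X\rightarrow Y$ and $\psi:Y\rightarrow X$ with $\dis(\phi)\leq\eta$, $\dis(\psi)\leq\eta$ and $C(\phi,\psi)\leq\eta$. Since every singleton of $X$ lies in $\check{\mathrm{C}}_\delta(X)$ for every $\delta\geq0$, the map $\phi$ is defined on all vertices of $\check{\mathrm{C}}_\delta(X)$, and I claim it carries $\check{\mathrm{C}}_\delta(X)$ into $\check{\mathrm{C}}_{\delta+\eta}(Y)$: if $\sigma\in\check{\mathrm{C}}_\delta(X)$ there is a witness $x_0\in X$ with $\max_{p\in\sigma}d_X(x_0,p)\leq\delta$, and then $\phi(x_0)$ witnesses $\phi(\sigma)\in\check{\mathrm{C}}_{\delta+\eta}(Y)$ because $d_Y(\phi(x_0),\phi(p))\leq d_X(x_0,p)+\eta\leq\delta+\eta$ for all $p\in\sigma$. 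Symmetrically, $\psi$ induces simplicial maps $\psi_\delta:\check{\mathrm{C}}_\delta(Y)\rightarrow\check{\mathrm{C}}_{\delta+\eta}(X)$. Applying the functor $H_k(\ast,\F)$ then produces candidate interleaving maps $\alpha_\delta:=H_k(\phi_\delta)$ and $\beta_\delta:=H_k(\psi_\delta)$.

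Next I would check the interleaving diagrams. The parallelogram conditions hold already at the simplicial level, since $\phi_{\delta'}\circ i^X_{\delta,\delta'}$ and $i^Y_{\delta+\eta,\delta'+\eta}\circ\phi_\delta$ are both the map $\sigma\mapsto\phi(\sigma)$ (and likewise with $\psi$), so they commute after applying $H_k$ by functoriality. For the triangle conditions I would invoke contiguity exactly as in Theorem~\ref{thm:stability}: given $\sigma\in\check{\mathrm{C}}_\delta(X)$ with witness $x_0$ as above, the same $x_0$ witnesses $\sigma\cup\psi_{\delta+\eta}(\phi_\delta(\sigma))\in\check{\mathrm{C}}_{\delta+2\eta}(X)$, because for every $a\in\sigma$,
\[
d_X(x_0,\psi(\phi(a)))\leq d_Y(\phi(x_0),\phi(a))+\eta\leq(d_X(x_0,a)+\eta)+\eta\leq\delta+2\eta,
\]
using the co-distortion bound, then $\dis(\phi)\leq\eta$, then the choice of $x_0$. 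Hence the inclusion $\check{\mathrm{C}}_\delta(X)\hookrightarrow\check{\mathrm{C}}_{\delta+2\eta}(X)$ and $\psi_{\delta+\eta}\circ\phi_\delta$ are contiguous, so by Lemma~\ref{lem:contig} they agree on $H_k$, giving $H_k(i^X_{\delta,\delta+2\eta})=\beta_{\delta+\eta}\circ\alpha_\delta$; the other triangle follows by swapping the roles of $X$ and $Y$.

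Putting these verifications together shows that $(\{\alpha_\delta\}_\delta,\{\beta_\delta\}_\delta)$ is an $\eta$-interleaving of $H_k\circ\check{\mathrm{C}}_\bullet(X)$ with $H_k\circ\check{\mathrm{C}}_\bullet(Y)$, which is exactly the claim. I expect the only genuine subtlety to be the one flagged above: the \v{C}ech radius is minimised over the whole ambient space rather than over the simplex, so one must be careful that the optimal witness point can be transported along $\phi$ (and, in the contiguity step, left in place) while still certifying membership at the shifted scale; once that observation is in hand, everything else is the same bookkeeping as in the Vietoris--Rips proof.
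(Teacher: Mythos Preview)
Your proposal is correct and follows exactly the approach the paper intends: the paper does not give an explicit proof of this theorem but simply states that it is similar to the proof of Theorem~\ref{thm:stability}, and your write-up supplies precisely those details, correctly identifying the witness-transport step as the only place where the \v{C}ech argument differs from the Vietoris--Rips one. Your observation that the parallelogram diagrams already commute on the nose at the simplicial level (rather than merely up to contiguity) is a small streamlining over the paper's treatment of the analogous step in Theorem~\ref{thm:stability}.
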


The proof of the above theorem is similar to that of Theorem \ref{thm:stability}. The next step is to define a notion of distance between persistence diagrams called the bottleneck distance.

\subsection{Bottleneck Distance}

We recall that given $n \in \mathbb{Z}_+$ and a field $\F$, $P\V_n(\F)$ denotes the category of pfd persistence vector spaces of length $n$, and $\D$ denotes the collection of all persistence diagrams $\dgm(\Vb)$, $\Vb \in P\V_n(\F)$. In the last section, we showed that for every $\Vb \in P\V_n(\F)$, there exists a multiset of intervals $\lc (b_i,d_i) \rc_{i \in I} $ such that $\Vb \cong \bigoplus_{i \in I}\I(b_i, d_i) $. A trivial fact is the following.

\begin{fact}
Given a multiset of intervals $\lc (b_i,d_i) \rc_{i \in I} $, we can construct a persistence vector space $\Vb$ such that $\Vb \cong \bigoplus_{i \in I}\I(b_i, d_i) $.
\end{fact}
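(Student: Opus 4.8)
The plan is to observe that the persistence vector space asked for is nothing but the direct sum of the interval pieces themselves; so the only real content is to make the notion of \emph{direct sum of persistence vector spaces} precise and then check that it satisfies the axioms of Definition~\ref{def:pvs}.

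First I would define, for any family $\{\Wb^{(i)}\}_{i\in I}$ in $P\V_n(\F)$ with $\Wb^{(i)} = \{W^{(i)}_j \xrightarrow{w^{(i)}_{j,j+1}} W^{(i)}_{j+1}\}_{j\in[1:n-1]}$, the direct sum $\bigoplus_{i\in I}\Wb^{(i)}$ to be the sequence of vector spaces $V_j := \bigoplus_{i\in I} W^{(i)}_j$ equipped with the transition maps $v_{j,j+1} := \bigoplus_{i\in I} w^{(i)}_{j,j+1}$, i.e.\ the unique linear map whose restriction to the $i$-th summand is $w^{(i)}_{j,j+1}$ followed by the canonical inclusion $W^{(i)}_{j+1}\hookrightarrow V_{j+1}$. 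Applying this construction with $\Wb^{(i)} = \I(b_i,d_i)$ (where, in the present finite-length setting, $b_i\leq d_i$ lie in $[1:n]$) yields the candidate $\Vb$.

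Next I would verify that $\Vb$ is a genuine persistence vector space of length $n$, i.e.\ that it satisfies the two conditions of Definition~\ref{def:pvs}. The identity condition $v_{j,j} = \mathrm{id}_{V_j}$ holds because on each summand it restricts to $w^{(i)}_{j,j} = \mathrm{id}_{W^{(i)}_j}$; the composition condition $v_{j,\ell} = v_{\ell-1,\ell}\circ\cdots\circ v_{j,j+1}$ follows summand by summand from the corresponding identity for each $\I(b_i,d_i)$, since a direct sum of composites is the composite of the direct sums. Moreover, when $I$ is finite we have $\dim V_j \leq \sum_{i\in I}\dim W^{(i)}_j \leq |I| < \infty$, so $\Vb$ is pointwise finite dimensional and hence $\Vb\in P\V_n(\F)$.

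Finally, the isomorphism $\Vb \cong \bigoplus_{i\in I}\I(b_i,d_i)$ is immediate: by construction $\Vb$ \emph{is} this direct sum, so the family of identity maps $\{\mathrm{id}_{V_j}\}_{j\in[1:n]}$ is an isomorphism of persistence vector spaces in the sense of the morphism/isomorphism definition. There is essentially no obstacle here; the only point requiring a little care is the bookkeeping of checking that the structure maps of each $\I(b_i,d_i)$ are exactly the prescribed zero and identity maps, so that the two axioms really do transfer summandwise, and noting that if one wishes to allow $I$ infinite (or endpoints in $\mathbb{R}_+$ with interval pieces indexed by $\mathbb{R}_+$) the same direct-sum construction still yields a well-defined, though no longer necessarily pointwise finite dimensional, persistence vector space.
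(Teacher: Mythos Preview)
Your proposal is correct. The paper itself does not give a proof of this statement at all: it is introduced with the sentence ``A trivial fact is the following'' and left unproved. Your explicit construction of the direct sum and verification of the axioms of Definition~\ref{def:pvs} is therefore more than the paper provides, and is exactly the natural (and essentially only) way to justify the statement.
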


We now define the bottleneck distance on $\D$. We recall that every element of $\D$ is a collection of finite multisets of points $(b,d)$, where $0 \leq b \leq d \leq \infty$.

\begin{definition}[Bottleneck distance]
Let $D = \lc(b_\alpha, d_\alpha)~|~\alpha \in A \rc $ and $D' = \lc(b'_\beta, d'_\beta)~|~\beta \in B \rc $ be elements of $\D$. A partial matching $m:A \rightarrow B$ is a bijection between a subset of $A$ and a subset of $B$, which are then the domain and co-domain of $m$ respectively. Let $M(A,B)$ denote the set of all partial matchings between $A$ and $B$. Given $m \in M(A,B)$, the cost of $m$ is defined as follows:
$$ \cost(m) := \max \left\lbrace \max_{\alpha \in A \setminus \dom(m)} \frac{(d_\alpha - b_\alpha)}{2}, \max_{\beta \in B \setminus \codom(m)} \frac{(d'_\beta - b'_\beta)}{2}, \max_{\alpha \in \dom(m)} \max \{|b_\alpha - b'_{m(\alpha)}|, |d_{\alpha} - d'_{m(\alpha)}| \}  \right\rbrace. $$
The bottleneck distance between $D$ and $D'$ is then defined as
$$ d_{\B}(D,D') := \min_{m \in M(A,B)} \cost(m). $$
\end{definition}

The definition implies that the bottleneck distance is symmetric, non-negative and vanishes if $D=D'$. The next theorem shows that the bottleneck distance satisfies the triangle inequality.
\begin{theorem}[\cite{chazal16}]
For any $D,D',D'' \in \D$, we have
$$ \db(D,D'') \leq \db(D,D') + \db(D',D''). $$
\end{theorem}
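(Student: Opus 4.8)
The plan is to prove the triangle inequality by the standard device of \emph{composing partial matchings}. Since $D,D',D''$ are finite multisets, the sets of partial matchings $M(A,B)$ and $M(B,C)$ are finite, so the minima defining $\db(D,D')$ and $\db(D',D'')$ are attained; fix $m_1\in M(A,B)$ with $\cost(m_1)=\db(D,D')$ and $m_2\in M(B,C)$ with $\cost(m_2)=\db(D',D'')$. Define the composite partial matching $m=m_2\circ m_1\in M(A,C)$, whose domain is $\{\alpha\in\dom(m_1):m_1(\alpha)\in\dom(m_2)\}$ and which sends such an $\alpha$ to $m_2(m_1(\alpha))$; this is a bijection onto its image because $m_1,m_2$ are injective on their domains. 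I claim $\cost(m)\leq\cost(m_1)+\cost(m_2)$, which immediately gives $\db(D,D'')\leq\cost(m)\leq\db(D,D')+\db(D',D'')$.

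To verify the claim I would bound each of the three kinds of terms appearing in $\cost(m)$ by $\cost(m_1)+\cost(m_2)$. For a matched pair $\alpha\in\dom(m)$, writing $\beta=m_1(\alpha)$ and $\gamma=m(\alpha)=m_2(\beta)$, the ordinary triangle inequality on $\mathbb{R}$ gives $|b_\alpha-b''_\gamma|\leq|b_\alpha-b'_\beta|+|b'_\beta-b''_\gamma|\leq\cost(m_1)+\cost(m_2)$ and likewise for the death coordinates, so $\max\{|b_\alpha-b''_\gamma|,|d_\alpha-d''_\gamma|\}\leq\cost(m_1)+\cost(m_2)$. For an unmatched point $\alpha\in A\setminus\dom(m)$ there are two subcases: if $\alpha\notin\dom(m_1)$ then $(d_\alpha-b_\alpha)/2\leq\cost(m_1)$; if $\alpha\in\dom(m_1)$ but $\beta:=m_1(\alpha)\notin\dom(m_2)$, then combining $(d'_\beta-b'_\beta)/2\leq\cost(m_2)$ with $|d_\alpha-d'_\beta|,|b_\alpha-b'_\beta|\leq\cost(m_1)$ via the identity $d_\alpha-b_\alpha=(d_\alpha-d'_\beta)+(d'_\beta-b'_\beta)+(b'_\beta-b_\alpha)$ yields $(d_\alpha-b_\alpha)/2\leq\cost(m_1)+\cost(m_2)$. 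The terms from unmatched points $\gamma\in C\setminus\codom(m)$ are handled symmetrically, splitting according to whether $\gamma\notin\codom(m_2)$ or $\gamma\in\codom(m_2)$ with preimage outside $\codom(m_1)$.

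Taking the maximum over all these terms gives $\cost(m)\leq\cost(m_1)+\cost(m_2)=\db(D,D')+\db(D',D'')$, and since $\db(D,D'')\leq\cost(m)$ by definition, the theorem follows.

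I expect the only mildly delicate point to be the bookkeeping in the ``half-matched'' cases --- an $\alpha$ matched by $m_1$ whose partner in $B$ is then dropped by $m_2$, and its mirror image on the $C$ side --- where one must trade the distance-to-diagonal of the intermediate point for a contribution to one of the two costs and the coordinate discrepancies for a contribution to the other. Everything else is a direct consequence of the triangle inequality in $\mathbb{R}$ together with the finiteness of the matching sets. No compactness or limiting argument is needed because the diagrams are finite; for infinite diagrams one would instead work with near-optimal matchings and an $\eps$-argument.
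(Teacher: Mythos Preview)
Your proof is correct and follows the standard approach of composing optimal partial matchings; the case analysis for matched, unmatched, and ``half-matched'' indices is handled properly, and the arithmetic in the half-matched case (bounding $(d_\alpha-b_\alpha)/2$ via $d_\alpha-b_\alpha=(d_\alpha-d'_\beta)+(d'_\beta-b'_\beta)+(b'_\beta-b_\alpha)$) goes through exactly as you indicate.

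Note, however, that the paper does not supply its own proof of this theorem: it is simply stated with a citation to \cite{chazal16}, so there is nothing to compare against in the paper itself. Your argument is essentially the one found in that reference (and in the broader literature on bottleneck distance), so in that sense you have reconstructed the intended proof. One minor remark: the diagrams in $\D$ as used in the paper may contain points with $d=\infty$, so strictly speaking you should either note that the standard conventions $|\infty-\infty|=0$ and $\infty-c=\infty$ make every step valid, or observe that if any of the relevant costs is $+\infty$ the inequality is trivial.
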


We now have the following theorem.

\begin{theorem}[Isometry Theorem\cite{Lesnick2015}] 
Given $n \in \mathbb{Z}_+$ and $\Vb, \Wb \in \V_n(\F)$, we have
$$ \di(\Vb, \Wb) = d_{\B}(\dgm(\Vb), \dgm(\Wb)). $$
\end{theorem}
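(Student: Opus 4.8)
The plan is to prove the two inequalities $\di(\Vb,\Wb)\le\db(\dgm(\Vb),\dgm(\Wb))$ and $\db(\dgm(\Vb),\dgm(\Wb))\le\di(\Vb,\Wb)$ separately. Throughout I regard the length-$n$ persistence vector spaces as $\mathbb{R}_+$-indexed by the standard extension (constant on each half-open interval between consecutive integers and $0$ outside $[1:n]$), so that the $\eta$-interleaving relation and the shift notation $\Vb[\eta]$ make sense, and I work with the interval decompositions $\Vb\cong\bigoplus_{i\in I}\I(b_i,d_i)$ and $\Wb\cong\bigoplus_{j\in J}\I(b_j',d_j')$ supplied by the structure theorem of the previous section, together with the bijection between diagrams and modules recorded in the preceding Fact.

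\textbf{The inequality $\di\le\db$ (building an interleaving from a matching).} Fix a partial matching $m\in M(I,J)$ and let $\eta>\cost(m)$. I would assemble an $\eta$-interleaving of $\Vb$ and $\Wb$ one summand at a time from three elementary facts, each a direct check of the triangle and parallelogram conditions. First, if $j=m(i)$ then $|b_i-b_j'|<\eta$ and $|d_i-d_j'|<\eta$, and in that case $\I(b_i,d_i)$ and $\I(b_j',d_j')$ are $\eta$-interleaved via the maps that are the identity on each index where both interval modules equal $\F$ and zero elsewhere. Second, if $i\in I\setminus\dom(m)$ then $(d_i-b_i)/2<\eta$, i.e.\ $d_i-b_i<2\eta$, and an interval module of ``length'' $<2\eta$ is $\eta$-interleaved with the zero module, since each composite $\I(b_i,d_i)_\delta\to\I(b_i,d_i)_{\delta+2\eta}$ already vanishes; likewise for unmatched summands of $\Wb$. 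Third, a direct sum of $\eta$-interleavings is an $\eta$-interleaving of the direct sums. Combining these over $I\sqcup J$ yields an $\eta$-interleaving of $\Vb$ with $\Wb$, hence $\di(\Vb,\Wb)\le\cost(m)$, and minimizing over $m$ gives $\di(\Vb,\Wb)\le\db(\dgm(\Vb),\dgm(\Wb))$.

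\textbf{The inequality $\db\le\di$ (extracting a matching from an interleaving).} This is the algebraic stability statement and is the substantive part. Given an $\eta$-interleaving $(\alpha,\beta)$, I must produce $m\in M(I,J)$ with $\cost(m)\le\eta$, and I would follow the induced-matching route. A short diagram chase from the triangle conditions — using $\beta[\eta]\circ\alpha=$ the internal $2\eta$-shift of $\Vb$ and, symmetrically, $\alpha[\eta]\circ\beta=$ the internal $2\eta$-shift of $\Wb$ — shows that the kernel and cokernel of the morphism $\alpha\colon\Vb\to\Wb[\eta]$ are annihilated by their internal $2\eta$-shift maps; in barcode terms every interval summand of $\ker\alpha$ or $\mathrm{coker}\,\alpha$ has ``length'' $<2\eta$. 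The technical heart is then the induced-matching lemma for pfd persistence vector spaces: factoring $\alpha$ through its image, $\Vb\twoheadrightarrow\im(\alpha)\hookrightarrow\Wb[\eta]$, one proves by induction on the number of interval summands (tracking the leftmost surviving summand) that a monomorphism induces an injection of its source barcode into its target barcode keeping deaths fixed and only moving births left, and an epimorphism induces an injection of its target barcode into its source barcode keeping births fixed and only moving deaths right, with the amount of motion controlled by the ``length'' of the cokernel resp.\ kernel. Composing the two injections produces the desired partial matching $m$; the ``length $<2\eta$'' bounds on $\mathrm{coker}\,\alpha$ and $\ker\alpha$ translate, after unshifting, into: matched bars agree in each endpoint to within $\eta$, and unmatched bars have $(d-b)/2\le\eta$. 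Hence $\cost(m)\le\eta$, so $\db(\dgm(\Vb),\dgm(\Wb))\le\di(\Vb,\Wb)$, and with the first inequality the equality follows.

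\textbf{Main obstacle and alternatives.} The difficulty is entirely in the second inequality, and specifically in the induced-matching lemma for mono- and epimorphisms — establishing that such a matching exists, that matched endpoints move only in the permitted direction, and that the displacement is controlled by the $2\eta$-triviality of $\ker\alpha$ and $\mathrm{coker}\,\alpha$; the first inequality is routine diagram-chasing with the three building blocks. If the bookkeeping in the induced-matching argument becomes awkward, I would instead invoke the interpolation argument of Chazal--Cohen-Steiner--Glisse--Guibas--Oudot (interpolate a path of persistence vector spaces between $\Vb$ and $\Wb$ and integrate the endpoint displacement along it) or the rectangle-measure and Hall's-theorem argument of \cite{chazal16}; each yields $\db\le\di$, and together with the converse inequality the isometry theorem as stated, following \cite{Lesnick2015}.
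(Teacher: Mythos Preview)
The paper does not prove this theorem at all: it is stated with a citation to \cite{Lesnick2015} and immediately followed by a corollary, with no proof environment or argument given. Your proposal therefore goes well beyond the paper's treatment. For what it is worth, your sketch is essentially the induced-matching proof of Bauer--Lesnick (the converse inequality being the standard ``direct sum of interleavings'' construction), which is indeed the approach behind the cited reference; but there is nothing in the paper to compare it against.
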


A direct corollary of the above theorem is the following.

\begin{corollary}
For all $(X,d_X), (Y,d_Y) \in \M$ and $k \in \mathbb{N}$,
$$2 \, \dgh(X,Y) \geq \di(H_k \circ \K^\vr_\bullet(X), H_k \circ \K^\vr_\bullet(Y)) = \db(\dgm_k^{\vr}(X), \dgm_k^{\vr}(Y)). $$
\end{corollary}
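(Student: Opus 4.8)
The displayed statement consists of an inequality followed by an equality, and I would handle them separately. The inequality $2\,\dgh(X,Y)\geq \di\big(H_k\circ\K^\vr_\bullet(X),\,H_k\circ\K^\vr_\bullet(Y)\big)$ is exactly the content of Theorem~\ref{thm:stability}, so nothing new is needed there. All the work lies in the equality $\di\big(H_k\circ\K^\vr_\bullet(X),\,H_k\circ\K^\vr_\bullet(Y)\big)=\db\big(\dgm_k^\vr(X),\dgm_k^\vr(Y)\big)$, which I plan to deduce from the Isometry Theorem after first identifying $\dgm_k^\vr(X)$ with the persistence diagram of the $\mathbb{R}_+$-indexed module $H_k\circ\K^\vr_\bullet(X)$.

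The first step is a structural observation: since $\K^\vr_\delta(X)$ depends on $\delta$ only through which pairwise distances are at most $\delta$, the filtered complex $\K^\vr_\bullet(X)$, and hence $H_k\circ\K^\vr_\bullet(X)$, is constant with identity transition maps on each interval of $\mathbb{R}_+\setminus\mathrm{spec}(X)$. In particular $H_k\circ\K^\vr_\bullet(X)$ is pfd, and applying the interval decomposition theorem of~\cite{CB12} to its restriction to $\mathrm{spec}(X)$ and then extending the summands across these constancy intervals exhibits $H_k\circ\K^\vr_\bullet(X)$ as a direct sum of interval modules over $\mathbb{R}_+$ whose left endpoints lie in $\mathrm{spec}(X)$ and whose right endpoints lie in $\mathrm{spec}(X)\cup\{\infty\}$. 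Next I would unwind Definition~\ref{def:samplingmap} together with the definition of $\T(\cdot,\mathrm{spec}(X))$: sampling recovers exactly the length-$n$ module $\mathbb{V}_k^X$ whose interval decomposition records these summands by their positions in the enumeration of $\mathrm{spec}(X)$, and $\T(\cdot,\mathrm{spec}(X))$ relabels those positions by the corresponding spectrum values, the clause $\T(\lc(1,n)\rc,A)=\lc(\alpha_1,\infty)\rc$ being precisely what restores the death $\infty$ of an essential class (which, for Vietoris--Rips persistence, occurs only in degree $0$). The outcome of this step is the identification $\dgm_k^\vr(X)=\dgm\big(H_k\circ\K^\vr_\bullet(X)\big)$ as multisets of real intervals, and likewise for $Y$.

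With this identification, the equality $\di=\db$ becomes an instance of the Isometry Theorem. The version stated above is for finite-length modules in $P\V_n(\F)$, so to invoke it I would pass to a common discretization: fix a uniform grid $A_\varepsilon=\{0,\varepsilon,2\varepsilon,\dots,M\varepsilon\}$ with $M\varepsilon$ exceeding $\max(\diam X,\diam Y)$, form the samples $\mathbf{S}\big(H_k\circ\K^\vr_\bullet(X),A_\varepsilon\big)$ and $\mathbf{S}\big(H_k\circ\K^\vr_\bullet(Y),A_\varepsilon\big)$ in $P\V_{M+1}(\F)$, apply the stated Isometry Theorem to them, multiply the resulting identity by $\varepsilon$, and let $\varepsilon\to 0$. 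Because both modules are constant off the finite set $\mathrm{spec}(X)\cup\mathrm{spec}(Y)$, once $\varepsilon$ is below the least gap of that set the samples determine the originals faithfully, $\varepsilon$ times the interleaving distance of the samples converges to $\di\big(H_k\circ\K^\vr_\bullet(X),H_k\circ\K^\vr_\bullet(Y)\big)$, and $\varepsilon$ times the bottleneck distance of their diagrams converges to $\db\big(\dgm_k^\vr(X),\dgm_k^\vr(Y)\big)$ by the identification of the previous paragraph; the claimed equality then follows in the limit. Alternatively one may simply cite the Isometry Theorem in its general pfd form for $\mathbb{R}$-indexed modules (see \cite{chazal16,Lesnick2015}), in which case the equality follows directly from the previous paragraph with no discretization.

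The main obstacle is this bridging step. An $\eta$-interleaving of the $\mathbb{R}_+$-indexed modules involves a real shift $\eta$ that need not be a multiple of $\varepsilon$, so one must check that restricting such an interleaving to the grid yields a discrete interleaving with shift at most $\lceil\eta/\varepsilon\rceil$, that conversely a discrete interleaving of the samples extends back to a real one with $O(\varepsilon)$ loss using the constancy of the modules between critical values, together with the analogous comparisons for the bottleneck distance and the un-sampling map $\T$. Everything else --- the inequality, the constancy observation, and the identification $\dgm_k^\vr(X)=\dgm\big(H_k\circ\K^\vr_\bullet(X)\big)$ --- is a routine unwinding of the definitions already in place.
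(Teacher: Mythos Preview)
The paper gives no proof of this corollary at all: it is stated immediately after the Isometry Theorem with the single sentence ``A direct corollary of the above theorem is the following.'' In spirit, then, your proposal agrees with the paper --- the inequality is Theorem~\ref{thm:stability} and the equality is the Isometry Theorem --- and in that sense your plan is correct.

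Where you go further than the paper is in noticing a genuine expository gap: the Isometry Theorem is stated in the paper only for modules in $P\V_n(\F)$, whereas the quantity $\di(H_k\circ\K^\vr_\bullet(X),H_k\circ\K^\vr_\bullet(Y))$ involves $\mathbb{R}_+$-indexed modules, and $\dgm_k^\vr(X)$ is defined indirectly through the sampling map $\mathbf{S}$ and the relabeling map $\mathbf{T}$ rather than as the interval decomposition of the continuous module. Your identification $\dgm_k^\vr(X)=\dgm(H_k\circ\K^\vr_\bullet(X))$ via the constancy of the Vietoris--Rips filtration between spectrum values is exactly the missing link, and it is correct. The paper simply elides this step.

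Your grid-discretization argument, however, is more machinery than the situation calls for, and as you yourself flag, making the $\varepsilon\to 0$ passage rigorous for both $\di$ and $\db$ requires several auxiliary lemmas that are not in the paper. The alternative you mention at the end --- citing the Isometry Theorem directly in its pfd $\mathbb{R}$-indexed form from \cite{chazal16,Lesnick2015} --- is the intended route and makes the corollary genuinely immediate once your identification of $\dgm_k^\vr(X)$ is in hand. I would lead with that and drop the discretization entirely.
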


It is known that while computing $\dgh(X,Y)$ is NP-hard, there is a polynomial time algorithm \cite{comp-top} for computing $\db(\dgm_k^{\vr}(X), \dgm_k^{\vr}(Y))$ for all $k \in \mathbb{N}$.

Furthermore, the inequality $2 \, \dgh(X,Y) \geq \db(\dgm_k^{\vr}(X), \dgm_k^{\vr}(Y))$ is tight. This is depicted by the following examples: for $\delta \geq 0$, let $X_\delta = \{a,b \} $ be the metric space with $d(a,b) = 1+ \delta$. Clearly, $\dgh(X_\delta, X_0) = \frac{\delta}{2}$. We also observe that $\dgm_0^{\vr}(X_\delta) = \lc [0, 1+ \delta], [0, \infty) \rc$. Thus, $\dgm_0^{\vr}(X_0) = \lc [0,1], [0, \infty) \rc$, and we obtain that $\db(\dgm_0^{\vr}(X_\delta), \dgm_0^{\vr}(X_0)) = \delta $. Therefore, we have $$2 \cdot \dgh(X_\delta, X_0) = \db(\dgm_0^{\vr}(X_\delta), \dgm_0^{\vr}(X_0)).$$
Now consider the metric spaces $X$ and $X_0$, where $X$ is the one point metric space and $X_0$ is as defined above. Then, $\dgh(X,X_0) = \frac{\diam(X_0)}{2} = \frac{1}{2}$. We have $\dgm_0^{\vr}(X) = \lc [0, \infty) \rc $ and $\dgm_0^{\vr}(X_0) = \lc [0,1], [0, \infty) \rc $. Thus, we have $\db(\dgm_0^{\vr}(X), \dgm_0^{\vr}(X_0)) = \frac{1}{2}$. Therefore, we obtain 
$$2 \cdot \dgh(X_0, X) > \db(\dgm_0^{\vr}(X_0), \dgm_0^{\vr}(X)).$$

\section{Applications of Persistent Homology} \label{sec:applications}

In this section, we first describe two problems in neuroscience that have been studied using persistent homology. We state their problem definitions, the experimental procedures that generate a finite data set, the method used to construct an abstract simplicial complex from the data set, and finally the results obtained using persistent homology. In the third and fourth subsections, we describe further applications of persistent homology to biology as well as to other areas.

\subsection{A Topological Paradigm for Hippocampal Spatial Map Formation using Persistent Homology}

This subsection describes article \cite{DMFC12} of the same title. The problem is that of identifying the topological features of an environment using the hippocampal activity of a rat moving in that environment. In every animal, the hippocampus is the region of the brain responsible for creating a mental map of the animal's environment. This mental map is made possible by activity of the neurons in the hippocampus called \emph{place cells}. As an animal explores a given environment, different place cells fire a series of action potentials in different, discrete regions of the environment. Each region, referred to as that cell's \emph{place field}, is defined by the pattern of neuronal firing, most intense at the center and attenuated towards the edges of the field. The cell remains silent when the animal is outside of the cell's place field. Experiments on rats suggest that the information contained in place cell firing patterns encodes spatial navigation routes and somehow represents the spatial environment \cite{Brown7411,McNaughton1983,ZGMS98}. Now, suppose that spatial location is the primary determinant of each place cell's firing. Then, co-firing of several place cells indicates that the corresponding place fields overlap, See Figure \ref{fig:place_cells}. Thus, the mental map formed by co-firing will be based on the properties of connectivity, adjacency and containment of place fields, and therefore will be a topological map of the environment. 

\begin{figure}
\centering
\scalebox{0.6}{\includegraphics{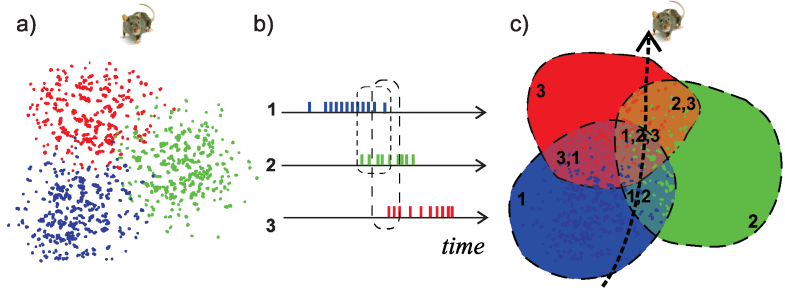}}
\caption{\small This figure is from \cite{DMFC12}. a) As a rat explores an environment, various place cells fire in spatially discrete locations. Here, three place fields are depicted as they might arise from spike trains from three place cells. b) A representation of spike trains fired from three different place cells as a rat explores the environment. We note that there is co-firing. c) The place fields derived from the three place cells in b); the co-firing patterns indicates areas of overlap of the place fields. When the rat makes a straightforward trajectory through an explored environment, different place cells will be activated and their place fields can overlap.} \label{fig:place_cells}
\end{figure}

A basic theorem of algebraic topology is the so called \emph{Nerve Theorem} \cite{Hatcherbook} which we paraphrase here as: if a space $X$ is covered with a sufficient number of regions, then it is possible to reconstruct the topology of $X$ using the intersection information of the regions. This theorem and the assumption that the place fields cover the environment leads to the hypothesis that the overlaps between the place fields, as represented by temporal overlap of spike trains (an ordered list of times at which a place cell fires) provide a connectivity map that retains the topological features of the environment. Thus, the authors of \cite{DMFC12} investigate whether a topological connectivity map can be effectively and reliably derived from neuronal spiking patterns using computational tools in the field of algebraic topology.  

\begin{figure}
\centering
\scalebox{0.7}{\includegraphics{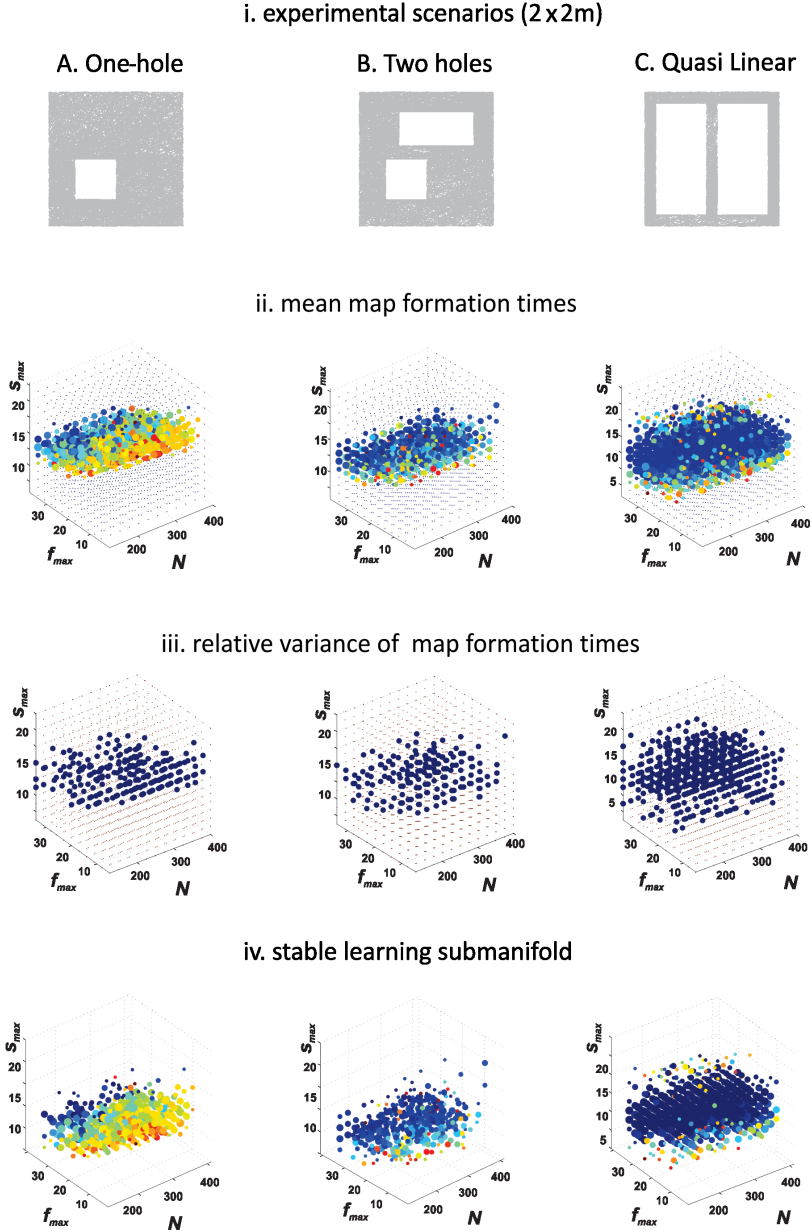}}
\caption{\small This figure is from \cite{DMFC12}. The top row depicts three experimental configurations, each $2 \times 2$ square, for the computational simulations. The configurations B and C are topologically equivalent but geometrically different. The dense network of gray lines represents the simulated trajectories. \emph{Second row}: Point cloud approximations that reveal mean map formation times for each space configuration. Each dot represents a ``hippocampal" state as defined by the three parameters $(\overline{f}, \overline{s}, N)$; the size of the dot reflects the proportion of trials in which a given set of parameters produced the correct outcome; the color of the dot is the mean time over ten simulations. If, for example, one set of parameters produced the correct topological information in $6$ out of $10$ trials, the dot will be $60 \%$ of the size of the largest dot, and the color will reflect the mean map formation time for the correct trials. (Blue represents success within the first $25 \%$ of the total time; green within the first $50 \%$, yellow-orange within the first $75 \%$, and red means success took nearly the whole time period. The maximal observed time was $4.3$ minutes for configuration A, $11.7$ minutes for B, and $9.3$ minutes for C.) We note that the third scenario (C) contains a preponderance of blue dots, indicating that the majority of hippocampal states easily mapped this environment. This is because the two holes are so large that a rat is virtually forced into a straight-line trajectory. We refer the reader to the original paper \cite{DMFC12} for a description of the third and fourth rows.} 
\label{fig:mean_map}
\end{figure}

We now briefly describe the details of the experiments performed in \cite{DMFC12}. The authors simulated map formation times (minimal time required to produce the correct topological signature of an environment) using different place cell parameters and three separate planar $2 \times 2$ meter areas with $1$ or $2$ holes. The place cell parameters are the firing rates, the place field sizes and the number of place cells. The firing rates and the place field sizes are described by $\log$-normal distributions, with $\overline{f}$ and $\overline{s}$ being the respective peak values, and $1.2 \overline{f}$ and $1.7 \overline{s}$ being the respective standard deviations. The question was for which parameters the place cell spiking signals would be able to produce a temporal simplicial complex with the correct number of topological loops, or Betti numbers, in every dimension. The authors probed ten distributions of firing rates, with $\overline{f}$ ranging from $2$ to $40$ Hz, and ten distributions of place field sizes, with $\overline{s}$ ranging from $10$ to $90$ cm. The number of place cells varied independently from $N = 50$ to $N =400$. In each case, the centers of the place fields were scattered randomly and uniformly over the environment. For each combination of the parameters, $\overline{f}, \overline{s}$ and $N$, the computation was repeated $10$ times, through which the authors computed the average time $T_{min}$ required for the emergence of correct topological features for each specific choice of ensemble parameters $\overline{f}, \overline{s}$ and $N$. The authors fix the simulated trajectory, but choose a new set of place field centers for each set of $\overline{f}, \overline{s}, N$ for each repetition. 

We now describe the mechanism of generating a filtered simplicial complex from the experimental data. Let $\{PF_1, PF_2, \ldots, PF_n \} $, $n \in \N$ be a set of place fields with specified shapes and locations. A simplicial complex $S$ with vertex set $\{v_1, v_2, \ldots, v_n \}$, one $v_i$ for each place field, can be constructed as follows: given $k \in \N$, a simplex $[v_{i_1}, v_{i_2}, \ldots, v_{i_k}] \in S $ if $PF_{i_1} \cap PF_{i_2} \cap \ldots \cap PF_{i_k} \neq \emptyset$. We recall that this coincides with the \emph{\v{C}ech} complex, also known as the \emph{nerve complex}. The Nerve Theorem \cite{Hatcherbook} states that if there is a space $X$ such that $X = \cup_{i=1}^n PF_i$ and each finite intersection of the place fields is contractible, then under fairly general conditions, the nerve complex $S$ has the same homotopy type as the underlying space $X$, and so the topological invariants computed from $S$ will agree with those corresponding to $X$. We saw that the experimental data does not consist of the place fields, but of the spike trains of the place cells. Thus, an overlap of the place fields is identified by co-firing of the corresponding place cells. Let $\{c_1, c_2, \ldots, c_n \}$ denote the place cells corresponding to the place fields $\{PF_1, \ldots, PF_n \}$ respectively, and let $\{s_1, s_2, \ldots, s_n \}$ denote the corresponding spike trains. We recall that for $i \in [1:n]$, a spike train $s_i$ is an ordered list of times at which the place cell $c_i$ fires. We fix an $\eps > 0$ and an $m \in \N$. We define a filtered simplicial complex as follows: given a simplex $\sigma = [i_1, i_2, \ldots, i_k]$, we define a function $f$ on $\sigma$ as 
$$f(\sigma) = \min \{t~:~\min_{j \in [1, \ldots,k]}|s_{i_j} \cap [t-\eps, t+ \eps] | \geq m \}. $$  
By definition, we have $f(\sigma) \leq f(\tau)$ if $\sigma \subseteq \tau$. Thus, we start with an empty simplicial complex, and then add simplices to this complex, according to the values of the simplices on the function $f$. The homology functor $H_k(\ast, \mathbb{Z}_2)$ is applied to this filtered simplicial complex, for $k =0,1$. For each $k = 0,1$, this produces a persistence vector space, and thus a barcode. Barcodes are used to determine the first two Betti numbers, $\beta_0$ and $\beta_1$. We recall that $\beta_0$ tells the number of connected components, and $\beta_1$ tells the number of $1$-dimensional holes. The software used to analyze the data is jPLEX \cite{sexton_jplex_2008}, a collection of MATLAB functions for computational topology that implements the concepts described above.

The results obtained in \cite{DMFC12} and their interpretation are depicted in Figure \ref{fig:mean_map}. The authors observed that the place cell parameters of firing rate and place field size for which a reliable topological map of the environment is produced correspond well with experimentally observed place cell firing rates and place field sizes. Thus, the fact that these parameters fall into the biological range lends support to this topological paradigm. 

\subsection{Topological Analysis of Population Activity in Visual Cortex}

This subsection describes article \cite{SMISCR08} of the same title. This work studies some basic aspects of the patterns of activity in the primary visual cortex (V1) evoked by natural images and during spontaneous activity. The authors focus on a topological characterization of population activity in visual cortex. The reason behind this approach is the following: it has been observed that spontaneous cortical states tend to reproduce the patterns evoked by oriented stimuli \cite{Kenet03}. Now, if cortical activity is restricted to patterns evoked by an oriented stimulus, then considering that orientation is a circular variable, this leads to the hypothesis that the activity patterns of the cortical cells must have a topological structure equivalent to that of a circle. This implies that the basic question about the structure of the cortical activity data is topological in nature. This work offers the first estimate of the underlying topological structure of V1 activity.

We now describe the experimental procedures adopted in \cite{SMISCR08}. The authors first validate their method on simulated data by recovering the topological structure of data sets where the ``ground truth" is known. The validation is done for a circle as well as torus. We refer the readers to the original paper \cite{SMISCR08} for details of the validation methods. The experimental studies were performed on three old-world monkeys (\emph{Macaca fascicularis}), See Figure \ref{fig:monkey_exper}. The database considered in this study was obtained using micro-machined electrode arrays consisting of a square grid of $10 \times 10$ electrodes $1.5$ mm long. The distance between neighboring electrodes was $400$ $\mu$m. Spike sorting was performed online using principal component analysis on the waveform shapes. In the spontaneous condition, the eyes were covered. The stimuli in the evoked condition were image sequences generated by digitally sampling commercially available videotapes in VHS format. The selected movies included both man-made and natural landscape scenes, and $6$ segments of $30$ seconds duration were shown.

\begin{figure}
\centering
\scalebox{0.25}{\includegraphics{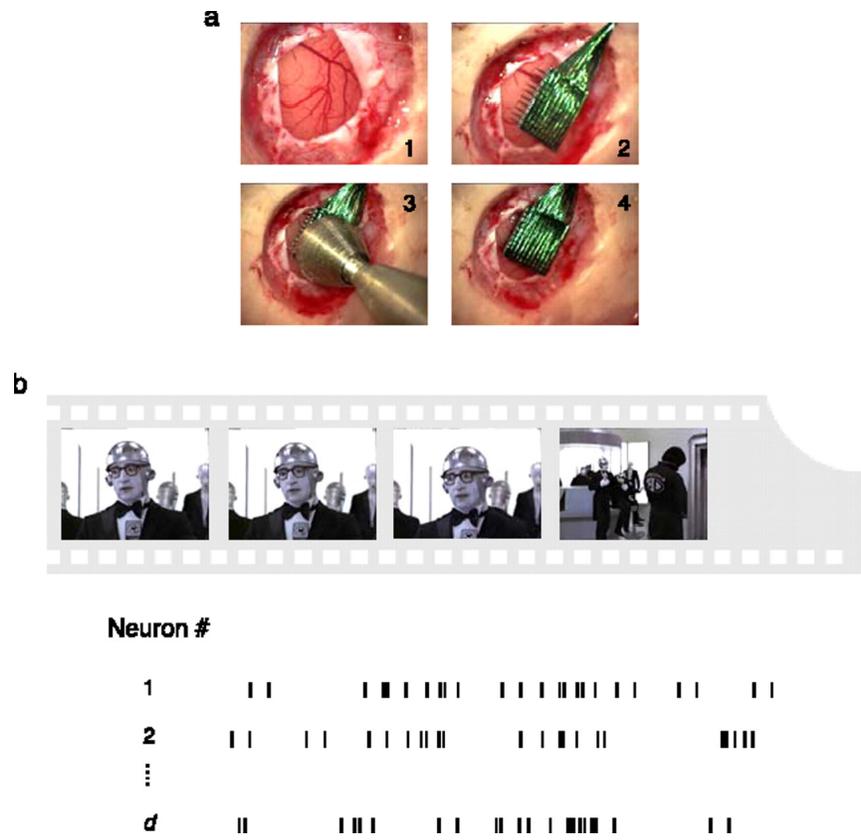}}
\caption{\small This figure is from \cite{SMISCR08}. \textbf{Experimental recordings in primary visual cortex.} a) Insertion sequence of a multi-electrode array into primary visual cortex. b) Natural image sequences, sampled from commercial movies, were used to stimulate all receptive fields of neurons isolated by the array. In the spontaneous condition, activity was recorded while both eyes were covered.} \label{fig:monkey_exper}
\end{figure}

We now describe how the data points were generated from the experiment described above. The preparation of the data points for both the spontaneous and driven activity during natural image simulation was identical. After spike-sorting signals from each electrode, the authors sub-selected a group of $5$ neurons that showed the highest firing rates. Then, a point cloud was generated by binning spikes in $50$ ms windows. The spontaneous and evoked activity segments were collected in lengths of 10 s each. Thus, each of these segments contain $200$ points living in $\mathbb{R}^5$, each neuron corresponding to a dimension. The statistical package PLEX was used with a weak witness complex construction which will be explained in the next paragraph. PLEX is a MATLAB collection of functions for computational topology. The authors recorded the maximal length of persistence intervals in the $1$-dimensional and $2$-dimensional barcodes.

We now describe the weak witness complex construction \cite{SMISCR08}. Given a finite metric space $(X,d_X)$, a set of points $L \subset X$ called the \emph{landmark set}, and $\epsilon > 0$, a point $x \in X$ is called an $\epsilon$-witness for a $k+1$-tuple $\{l_0, l_1, \ldots, l_k \} $ of points in $L$ if $\max_i d_X(x,l_i) \leq \epsilon + m_x$, where $m_x$ denotes the $k+1$ smallest value of $d_X(x,l)$ as $l$ varies over all of $L$. Now, a simplicial complex $W_\epsilon(X,L)$ is associated to $X, L$ and $\epsilon$ by fixing the vertex set of $W_\epsilon(X,L)$ to be $L$, and declaring that a collection $\{l_0, l_1, \ldots, l_k \} $ spans a $k$-simplex in $W_\epsilon(X,L)$ if and only if there is an $\epsilon$-witness in $X$ for the collection $\{l_0, l_1, \ldots, l_k \} $ and for all its faces. Clearly, if there is an $\epsilon$-witness for the simplex $\sigma = \{l_0, l_1, \ldots, l_k \} $, then there is an $\epsilon'$-witness for $\sigma$, $\epsilon' \geq \epsilon$. Thus, we obtain that for $\epsilon \leq \epsilon'$, $W_\epsilon(X,L) \subseteq W_{\epsilon'}(X,L)$ and this results in a filtered simplicial complex. In \cite{SMISCR08}, out of the $200$ data points in $\mathbb{R}^5$, a landmark set of $35$ points is chosen by the max-min procedure as follows: first a random point, say $x_1$ from $X$ is picked. Then, the point $x_2$ is chosen such that $d_X(x_1,x_2)$ is maximized. The point $x_3$ is chosen such that $d_X(x_3, \{x_1,x_2 \})$ is maximized, and so on. The weak witness construction was used because, unlike the Vietoris-Rips simplicial complexes, the construction of weak witness simplicial complexes for large data sets is much more computationally tractable.

\begin{figure}
\centering
\scalebox{0.3}{\includegraphics{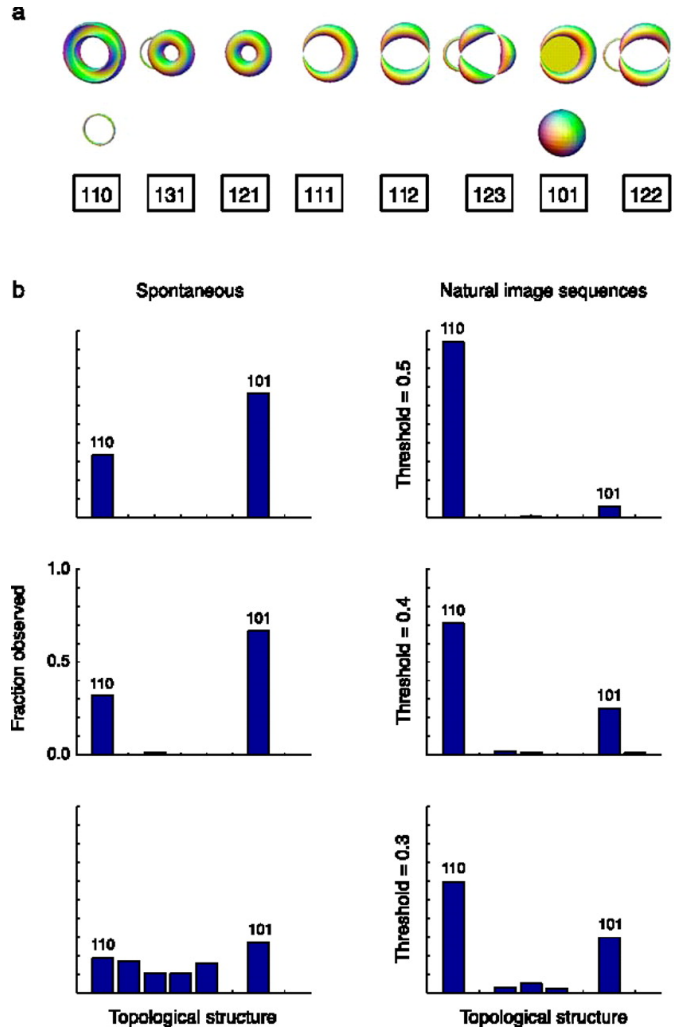}}
\caption{\small This figure is from \cite{SMISCR08}. \textbf{Estimation of topological structure in driven and spontaneous conditions.} a) Ordering of topological signatures observed in the experiments. Each triplet $(b_0, b_1, b_2)$ is shown along an illustration of objects consistent with these signature. b) Distribution of topological signatures in the spontaneous and natural image stimulation conditions pooled across the three experiments performed. Each row correspond to signatures with a minimum interval length (denoted as the threshold) expressed as a fraction of the covering radius of the data cloud.} \label{fig:Exper_results}
\end{figure}

We now describe the results obtained in \cite{SMISCR08}. In Figure \ref{fig:Exper_results}, different topological signatures observed in $10$ s segments of the data labeled by the first three Betti numbers $(b_0, b_1, b_2)$ are illustrated. Each row of Figure \ref{fig:Exper_results} represents a different ``threshold'' for the length of the interval of the signature (in the barcode) as a fraction of the covering radius of the data. The covering radius is defined as $R_0 = \max_{x \in X} \min_{l \in L}d_X(x,l)$, where $X$ is the data set and $L$ is the set of landmarks. Larger thresholds represent instances where the signature was long-lived and likely to represent a salient feature of the data. 

\subsection{Further Applications to Biology}

This paragraph describes some more applications of persistent homology to neuroscience. In \cite{Spreemann18}, the authors propose a method based on persistent homology to automatically classify neuronal network dynamics using topological features of spaces built from spike-train distances. The dynamics of a neuronal network are believed to be indicative of the computations it can perform, and thus, understanding the neuronal network dynamics enables understanding of how neuronal networks perform computations and process information. The paper \cite{CDM18} is an extension to \cite{DMFC12}, wherein the authors use the concept of zig-zag persistent homology \cite{Carlsson-Silva2010,CSM09} to account for the possibility of forgetting information in the model for memory. The results obtained in \cite{CDM18} show that in order to achieve the best possible results in ``learning'' an arena, the rodent needs a balance between remembering and forgetting information. These results are in accordance with recent findings in neuroscience, where it has been proposed that forgetting is an important step in the learning process. The work by Giusti et.~al.~in \cite{GGB16} explores the method of persistent homology over the traditional graph-theoretic methods, for understanding neural data. 

In \cite{XW14}, persistent homology is used for the first time for protein characterization, identification and classification. The authors extracted molecular topological fingerprints based on the persistence of molecular topological invariants. In \cite{ESR16}, persistent homology is used to characterize the complex structure of chromatin inside cell nucleus. The authors apply persistent homology to human cell line data and show how this method captures complex multiscale folding methods.

In \cite{Chan18566}, persistent homology is used to study evolutionary events. The authors consider a set of genomes and calculate the genetic distance between each pair of sequences. Using these distances, they calculate the homology groups across all genetic distances $\eps$ in different dimensions. They observe that the zero-dimensional homology provides information about vertical evolution, i.e.~at a particular $\eps$, the Betti number $b_0$ represents the number of different strains or subclades. The one-dimensional homology provides information about horizontal evolution since reticulate events (merging of different clades to form a new hybrid lineage) are represented by loops in phylogenetic networks. Some examples of reticulate events include recombination and reassortment of genomes. The genomic datasets used are those of influenza strains, HIV, rabies, dengue, flaviviruses, West Nile virus and Newcastle virus. In a follow-up paper \cite{CLR16}, persistent homology is used to study the specific evolutionary event of recombination. In \cite{Chan18566}, the relation between persistent homology and explicit evolutionary histories incorporating recombination events was not studied. Therefore, in \cite{CLR16}, persistent homology is applied on appropriate genomic sets in order to characterize the genomic regions where recombination takes place and identify the gametes involved in particular recombination events. The persistent homology barcodes derived from each of these sets are structured as a ``barcode ensemble'' where each bar captures a recombination event. A software called TARGet is developed that generates a graph in polynomial time, capturing ensembles of minimal recombination histories. The evolutionary event of recombination has been further studied in \cite{Lesnick2018QuantifyingGI} where the authors introduce ``novelty profiles'' of evolutionary histories. The novelty profile of an evolutionary history is a list of $k$ monotonically decreasing numbers, where $k$ is the number of recombination events in the history and each number roughly measures the contribution every recombination makes to the genetic diversity in the population. Persistent homology of sampled data is used to obtain information about a novelty profile. The authors of \cite{Lesnick2018QuantifyingGI} provide mathematical foundation for several works that have used persistent homology to study recombination. Some other articles showing the use of persistent homology for studying recombination events are \cite{ER14, CRELR16, ER16}.

In a different direction, another topological method for studying finite metric spaces is \emph{Mapper} \cite{mapper}. It is a computational method for extracting simple descriptions of high dimensional data sets in the form of simplicial complexes. This method has been widely used for analysis of biological data sets as seen in \cite{Nicolau7265,pathways,Lee2017SpatiotemporalGA, cecco15,torres16,OLIN2018,saggar18,frattini18,Rucco2014UsingTD,Bartlett2012,Phinyomark2018,Kyeong15,Selinger2014,CAMARA201795,phinyomark17,sgouralis17}. 

\subsection{Applications to Other Domains}

Persistent homology has also been used for shape classification. The authors of \cite{CCGMO09} use persistent homology to identify signatures of finite metric spaces that are stable under the Gromov-Hausdorff distance. The signatures are nothing but metric invariants obtained using persistent homology along with attributes of the metric spaces like diameter and eccentricity. These signatures are computed and then used to measure the degree of dissimilarity of a pair of metric spaces. The authors adapt this method to compare shapes, by first uniformly sampling points from each shape to generate a finite metric space and then comparing the finite metric spaces using the identified signatures. 

In this paragraph, we provide two examples where persistent homology has been used for studying chemical compounds. In \cite{XFTW14}, persistent homology is used for studying fullerenes, which are special molecules consisting of only carbon atoms. Here, the point cloud is given by the atoms of the fullerenes, and a Vietoris-Rips filtration is constructed by the usual process of assigning radii to the point cloud. The authors thus study the stability of the fullerene molecules by observing that the total curvature energies of the fullerene isomers can be well represented with the lengths of their long-lived Betti $2$-bars. In \cite{Hess18}, persistent homology is used to build a descriptor for identifying and comparing zeolites, according to their pore shapes. Zeolites are nanoporous materials made of silica. The authors performed high-throughput screening of zeolites based on this descriptor and identified best zeolites for methane storage and carbon capture applications. The results obtained in \cite{Hess18} match the existing results on top-performing zeolites for these applications.

\section{Software Packages for Persistent Homology} \label{sec:software}

There are various open source softwares available for computing persistent homology. These are available in R, Python, C$++$, Java as well as in MATLAB. The softwares are \textbf{Perseus} \cite{perseus}, \textbf{PHAT} \cite{phat}, \textbf{DIPHA} \cite{dipha}, \textbf{CTL} \cite{ctl}, \textbf{Ripser} \cite{ripser}, \textbf{TDA} \cite{tda}, \textbf{javaPlex} \cite{Javaplex}, \textbf{Dionysus} \cite{dionysus}, \textbf{Gudhi} \cite{gudhi}, \textbf{TDAstats} \cite{TDAstats}, \textbf{Scikit-TDA} \cite{scikittda2019} and \textbf{the Topology Toolkit} \cite{toolkit}.

\bibliographystyle{alpha}
\bibliography{references}
\end{document}